\newtheorem{thm}{Theorem}[section]
\newtheorem{prop}{Proposition}[section]
\newtheorem{df}{Definition}[section]
\newtheorem{lem}{Lemma}[section]
\newtheorem{cor}{Corollary}[section]
\newtheorem{dflem}{Definition-Lemma}[section]
\newtheorem{hyp}{Hypothesis}[section]
\newtheorem{ex}{Example}[section]
\newtheorem{conj}{Conjecture}[section]
\newtheorem{rem}{Remark}[section]
\newenvironment{dem}{\paragraph{Proof}}
{\begin{flushright}$\Box$\end{flushright}}
\newcommand{\N}{\mathbb{N}}
\newcommand{\Z}{\mathbb{Z}}
\newcommand{\Q}{\mathbb{Q}}
\newcommand{\C}{\mathbb{C}}
\newcommand{\R}{\mathbb{R}}
\newcommand{\AK}{\mathbb{A}_{K}}
\newcommand{\AF}{\mathbb{A}_{F}}
\newcommand{\AL}{\mathbb{A}_{L}}
\newcommand{\AFF}{\mathbb{A}_{F'}}
\newcommand{\AFsub}{\mathbb{A}_{F_{0}}}
\newcommand{\AQ}{\mathbb{A}_{\mathbb{Q}}}
\title{Period relations for automorphic induction and applications, I}
\author{Jie LIN}
\date{\today}
\begin{document}
\maketitle

\begin{abstract}
Let $K$ be a quadratic imaginary field. Let $\Pi$ (resp. $\Pi'$) be a regular algebraic cuspidal representation of $GL_{n}(K)$ (resp. $GL_{n-1}(K)$) which is moreover cohomological and conjugate self-dual. In \cite{harris97}, M. Harris has defined automorphic periods of such a representation. These periods are automorphic analogues of motivic periods. In this paper, we show that automorphic periods are functorial in the case where $\Pi$ is a cyclic automorphic induction of a Hecke character $\chi$ over a CM field. More precisely, we prove relations between automorphic periods of $\Pi$ and those of $\chi$. As a corollary, we refine the formula given by H. Grobner and M. Harris of critical values for the Rankin-Selberg $L$-function $L(s,\Pi\times \Pi')$ in terms of automorphic periods. This completes the proof of an automorphic version of Deligne's conjecture in certain cases.
\end{abstract}

\tableofcontents

\section*{Introduction}
Special values of $L$-functions play an important role in Langlands program. Numerous conjectures predict that special values of $L$-functions reflet arithmetic properties of geometric objects. Most of these conjectures are still open and hard to accessible. 

On the other hand, concrete results on special values of $L$-function appear more and more in automorphic settings. For example, In \cite{harris97}, M. Harris constructed complex invariants associated to certain automorphic representations and showed that the special values of automorphic $L$-function could be interpreted in terms of these invariantes. We believe that these invariants are fuctorial for automorphic induction, base change and endoscopic transfer. This article treats the case of automorphic induction of Hecke characters over a CM field.

\bigskip

Let $K$ be a quadratic imaginary field and $F$ be a CM field which is a cyclic extension of $K$ of degree $n$. Let $\chi$ be a regular conjugate self-dual algebraic Hecke character of $F$ satisfying Hypothesis \ref{hypsupercuspidal}. For $\Psi$ any CM type of $F$, a complex invariant $p(\chi,\Psi)$ can be defined and called a $CM$ period. It is defined as ratio of two rational structures on a cohomological space of a Shimura variety associated to $(\chi,\Psi)$.

Put $\Pi=\Pi(\chi)$ the automorphic induction of $\chi$ from $GL_{1}(\AF)$ to $GL_{n}(\AK)$. We assume $n$ is odd at first. In this case, the cuspidal representation $\Pi$ is regular, algebraic, cohomological and conjugate self-dual. For each integer $0\leq s\leq n$, M. Harris has defined $P^{(s)}(\Pi)$ a complex invariant which is called the automorphic period. It is defined as Petersson inner product of a rational element in the coherent cohomology of a Shimura variety associated to $\Pi$.

Our main result is the following: $\cfrac{P^{(s)}(\Pi)}{p(\check{\chi},\Phi_{s,\chi})}$ is an algebraic number where $\Phi_{s,\chi}$ is a CM type of $F$ which depends only on $s$ and $\chi$.

More precisely, we obtain that $\cfrac{P^{(s)}(\Pi)}{D_{F^{+}}^{1/2}\mathcal{G}(\varepsilon_{K})^{-[\frac{n}{2}]}p(\check{\chi},\Phi_{s,\chi})}$ is an element in $E(\chi)^{\times}$ where $D_{F^{+}}$ is the absolute discriminant of $F^{+}$, $\varepsilon_{K}$ is the Artin character of $\AQ^{\times}$ with respect to $K/\Q$, $\mathcal{G}(\varepsilon_{K})$ is the Gauss sum of $\varepsilon_{K}$ and $E(\chi)$ is a number field associated to $\chi$. The above relation is moreover equivariant under action of $G_{K}$. We refer to the subsequent sections for this notion. 


The idea of the proof is simple. Let $\eta$ be an auxiliary Hecke character of $K$. For certain integer $m$, M. Harris has interpreted $L(m,\Pi\otimes \eta)$ in terms of $P^{(s)}(\Pi)$ and some explicit factors for an integer $0\leq s\leq n$. 

On the other hand,  $L(m,\Pi\otimes \eta)=L(m, \chi\otimes \eta\circ N_{L/K})$. Blasius has shown that the latter is product of $p(\chi\otimes \eta\circ N_{L/K}, \Psi)$ with some simple factors and an algebraic number for a CM type $\Psi$ (c.f. Proposition $1.8.1$ of \cite{harrisCMperiod}). From the construction of CM period, we know that CM periods are multiplicative. Therefore, we obtain relations of automorphic periods of $\Pi$ and CM periods of $\chi$ provided that the special value of $L$-function is non zero.

What remains to do is that for every integer $0\leq s\leq n$, find a proper character $\eta$ such that $P^{(s)}(\Pi)$ involves in a non-vanishing special value of $L$-function of $\Pi\otimes\eta$. The existence of $\eta$ follows from careful calculation and the non vanishing of $L$-function follows from results in \cite{harrisunitaryperiod}.

Let us mention that the conditions for $\chi$ is indispensable. Since $P^{(s)}(\Pi)$ is only defined for representations which descend to a unitary group of sign $(n-s,s)$ at infinity, the conjugate self-dual algebraic condition and Hypothesis  \ref{hypsupercuspidal} are needed for applying results of base change. The regular condition allows us to find certain $m$ (which will be called a critical value in the text) for every integer $0\leq s\leq n$.

When $n$ is even, $\Pi$ is no longer algebraic. We can modify $\Pi$ and get similar result. More details can be found in section \ref{evendim}.

\bigskip

These relations of periods lead to a refinement of the formula of $L(m+\frac{1}{2},\Pi\times \Pi')$ by H. Grobner and M. Harris. Here $\Pi'$ is a cuspidal representation of $GL_{n-1}(K)$ which satisfies the same conditions as $\Pi$. In \cite{harrismotivic}, it is proved under regular conditions that $L(m+\frac{1}{2},\Pi\times \Pi')$ equals to some archimedean local factors times automorphic periods up to multiplication by an algebraic number. Therefore, if both $\Pi$ and $\Pi'$ come from Hecke characters, we can now substitute the automorphic periods by CM periods. On the other hand, Blasius's result also gives an interpretation of $L(m+\frac{1}{2},\Pi\times \Pi')$ in terms of CM periods. Combining the two equations, we obtain that the product of these archimedean factors equals to a power of $2\pi i$ up to multiplication by an algebraic number.

Note that the archimedean factors only depend on the infinite part of $\Pi$ and $\Pi'$. This formula should apply to more general $(\Pi,\Pi')$ which do not necessarily come from Hecke characters. This is true for almost all $(\Pi,\Pi')$ in the settings of \cite{harrismotivic}. We assume Hypothesis \ref{m=0} to avoid the only exceptional case where $m=0$. We believe that this Hypothesis is always true but we have not found a proof at this moment.

It follows that $L(m+\frac{1}{2},\Pi\times \Pi')$ equals to a power of $2\pi i$ times automorphic periods up to multiplication by an algebraic number. 

Let us now turn to the setting of motive. Deligne has conjectured in \cite{deligne79} that the critical value of $L$-function of a motive equals to a power of $2\pi i$ times Deligne's period up to multiplication by an algebraic number.  In \cite{harris97}, M. Harris has constructed a motive associated to the representations $\Pi$ and $\Pi'$. In \cite{harrismotivic}, the authors have shown that the product of the automorphic periods appeared in the formula of $L(m+\frac{1}{2},\Pi\times \Pi')$ consist formally with Deligne's period. We then deduce a proof of an automorphic version of Deligne's conjecture in our case.

\bigskip

\paragraph{\textbf{Acknowledgement}}
I would like to express my sincere gratitude to my advisor Michael Harris for suggesting this problem and approach, for reading and correcting earlier versions carefully, and for his exemplary guidance, patience and encouragement. I would also like to thank Harald Grobner for helpful conversations. This article relies highly on their works.

\begin{section}{CM period}
 
\begin{subsection}{Notation}\label{notation}
\text{ }

We fix an embedding $\bar{\Q}\hookrightarrow \C$.


Let $F^{+}$ be a totally real field of degree $n$ over $\Q$ and $F$ be a quadratic imaginary extension of $F^{+}$ and hence a CM field. We denote $\Sigma_{F}$ (resp. $\Sigma_{F^{+}}$) the set of complex embeddings of $F$ (resp. $F^{+}$). Let $\iota\in Gal(F/F^{+})$ be the complex conjugation.

For $z\in \C$, we write $\bar{z}$ for its complex conjugation. For $\sigma\in \Sigma_{F}$, we define $\bar{\sigma}:=\iota\circ \sigma$ the complex conjugation of $\sigma$.

Let $\Phi$ be a subset of $\Sigma_{F}$. We say that $\Phi$ is a CM type if $\Phi\cup \iota \Phi=\Sigma_{F}$ and $\Phi\cap \iota \Phi=\varnothing$. We fix $\{\sigma_{1},\sigma_{2},\cdots,\sigma_{n}\}$ a CM type of $F$. It follows that all the complex embeddings of $F$ are $\sigma_{1}$, $\sigma_{2}$, $\cdots$, $\sigma_{n}$, $\bar{\sigma}_{1}$, $\bar{\sigma}_{2}$, $\cdots$, $\bar{\sigma}_{n}$.

Let $\chi$ be an Hecke character of $F$, i.e. a continuous complex valued character of $F^{\times}\backslash \AF^{\times}$. We assume that $\chi$ is \textbf{algebraic}, namely its infinity type $\chi_{\infty}(z)$ is of the form $\prod\limits_{i=1}^{n}\sigma_{i}(z)^{a_{i}}\bar{\sigma}_{i}(z)^{b_{i}}$ with $a_{i}$, $b_{i}\in \Z$.

We assume moreover:
\begin{itemize}
\item $\chi$ is \textbf{motivic}, i.e. $a_{i}+b_{i}=-w(\chi)$ which is an integer independent of $i$.
\item $\chi$ is \textbf{critical}, i.e. $a_{i}\neq b_{i}$ for all $i$.
\end{itemize}

We can then define $\Phi_{\chi}$, a unique CM type associated to $\chi$, as follows: for each $i$,  $\sigma_{i}\in \Phi_{\chi}$ if $a_{i}<b_{i}$, otherwise $\bar{\sigma}_{i}\in \Phi_{\chi}$. In this case, we say that $\chi$ is \textbf{compatible} with $\Phi_{\chi}$.

Since $\chi$ is algebraic, one can define $E(\chi_{\infty})$, a number field, as in $(1.1.2)$ of \cite{harrisCMperiod}). It is a field of definition of a certain line bundle of a Shimura variety (see the appendix of \cite{harrisappendix} for more details). We denote by $E(\chi)$ the field generated by the values of $\chi$ on $\AF\text{}_{,f}$ over $E(\chi_{\infty})$. It is still a number field. We may assume that $E(\chi)$ contains of $F$.

For every $\sigma\in G_{\Q}:=Gal(\bar{\Q}/\Q)$, we can define $\chi^{\sigma}$ as the composition: $\chi^{\sigma}: \AF \xrightarrow{\chi} E(\chi) \xrightarrow{\sigma} \bar{\Q}$. It is easy to see that $\chi^{\sigma}$ depends only on $\sigma|_{E(\chi)}$ which is in fact an embedding of $E(\chi)$ into $\bar{\Q}$. We denote the set of embeddings of $E(\chi)$ into $\bar{\Q}$ by $\Sigma_{E(\chi)}$ as for the field $F$. For $\sigma\in \Sigma_{E(\chi)}$, we take $\widetilde{\sigma}$ to be any lift of $\sigma$ in $G_{\Q}$. We define $\chi^{\sigma}$ to be $\chi^{\widetilde{\sigma}}$. Obviously, it does not depend on the choice of $\widetilde{\sigma}$. Note that the set $\{\chi^{\sigma},\sigma\in \Sigma_{E(\chi)}\}$ is the same as the set $\{\chi^{\sigma},\sigma\in G_{\Q}\}$.\\

For $s$ a fixed complex number, $(L(s,\chi^{\sigma}))_{\sigma\in \Sigma_{E(\chi)}}$ is an element in $\C^{\Sigma_{E(\chi)}}$. Moreover, when $\sigma$ runs over all the elements in $\Sigma_{E(\chi)}$,  $L(s,\chi^{\sigma})$ takes over all the values in $\{L(s,\chi^{\sigma})|\sigma \in G_\Q\}$. Finally, we identify $\C^{\Sigma_{E(\chi)}}$ with $E(\chi)\otimes \C$ by the inverse of the map which sends $t\otimes z$ to $(\sigma(t)z)_{\sigma\in \Sigma_{E(\chi)}}$ for all $t\in E(\chi)$ and $z\in \C$.

More generally, let $E$ be a number field and $\{ a(\sigma) \} _{\sigma\in G_{\Q}(\text{ resp. }Aut(\C))}$ be some complex numbers which satisfy that

$$a(\sigma)=a(\sigma')\text{ if }  \sigma|_{E}=\sigma'|_{E}\text{ for any }\sigma,\sigma'\in G_{\Q}\text{ (resp. }Aut(\C))  \hspace{3em} (*).$$

We can define $a(\sigma)$ for $\sigma\in \Sigma_{E}$ as previously. Sometimes we simply write $(a)_{\sigma\in \Sigma_{E}}$ instead of $(a(\sigma))_{\sigma\in \Sigma_{E}}$. It is an element of $E\otimes \C$.

\begin{df}
Let $E'$ be a subfield of $\C$, we say $A\sim _{E;E'} B$ if one of the following condition is satisfied:

\begin{enumerate} 
\item $A=0$, 
\item $B=0$ or 
\item both $A,B\in (E\otimes \C)^{\times}$ with $AB^{-1} \in (E\otimes E')^{\times} \subset (E\otimes \C)^{\times}$.
\end{enumerate}

Moreover, we simply note $\sim_{E;\Q}$ by $\sim_{E}$.
\end{df}

\begin{rem}\label{transitive}
Note that this relation is symmetric but not transitive. More precisely, if $A$, $B$, $C\in E\otimes \C$ with $A\sim_{E;E'}B$ and $A\sim_{E;E'}C$, we do not know whether $B\sim_{E;E'} C$ in general unless the condition $A\neq 0$ is provided.

\end{rem}

\begin{rem}
The given identification $E\otimes \C \cong \C^{\Sigma_{E}}$ is a morphism of algebras where the multiplication on the latter is the usual multiplication through each coordinates. This fact implies the following lemma:

\end{rem}

\begin{lem}\label{anotherdef}
Let $E'$ be a subset of a number field $E$. Let $( a(\sigma) ) _{\sigma\in Aut(\C)}$ and $(b(\sigma))_{\sigma\in Aut(\C)}$ be some complex numbers which satisfy $(*)$. Hence $(a(\sigma))_{\sigma\in \Sigma_{E}}$ and $(b(\sigma))_{\sigma\in \Sigma_{E}}$ can be defined.

We assume $b(\sigma)\neq 0$ for all $\sigma$. It is equivalent to saying that $(b(\sigma))_{\sigma\in \Sigma_{E}}\in (E\otimes \C)^{\times}$.

We have $(a(\sigma))_{\sigma\in \Sigma_{E}}\sim_{E;E'} (b(\sigma))_{\sigma\in \Sigma_{E}}$ if and only if

$$\tau\left(\cfrac{a(\sigma)}{b(\sigma)}\right)=\cfrac{a(\tau\sigma)}{b(\tau\sigma)}$$
 for all $\tau\in Aut(\C)$ with $\tau|_{E'}=Id$ and all $\sigma\in Aut(\C)$.

\end{lem}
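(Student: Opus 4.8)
The plan is to unwind both sides through the fixed algebra isomorphism $E\otimes\C\cong\C^{\Sigma_E}$ and the description of $\Sigma_E$ in terms of $Aut(\C)$ (equivalently $G_\Q$). First I would record the hypothesis: since $b(\sigma)\neq 0$ for every $\sigma$, the tuple $B:=(b(\sigma))_{\sigma\in\Sigma_E}$ lies in $(E\otimes\C)^\times$, so by the definition of $\sim_{E;E'}$ the relation $A\sim_{E;E'}B$ with $A:=(a(\sigma))_{\sigma\in\Sigma_E}$ is equivalent to: either $A=0$, or $A\in(E\otimes\C)^\times$ and $AB^{-1}\in(E\otimes E')^\times$. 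Because the identification $E\otimes\C\cong\C^{\Sigma_E}$ is a ring isomorphism, $B^{-1}$ corresponds to $(b(\sigma)^{-1})_{\sigma\in\Sigma_E}$ and the product $AB^{-1}$ corresponds to the coordinatewise quotient $\left(\tfrac{a(\sigma)}{b(\sigma)}\right)_{\sigma\in\Sigma_E}$. So the whole statement reduces to a clean criterion: an element $C=(c(\sigma))_{\sigma\in\Sigma_E}\in\C^{\Sigma_E}$ lies in the image of $E\otimes E'$ if and only if $\tau(c(\sigma))=c(\tau\sigma)$ for all $\sigma\in Aut(\C)$ and all $\tau\in Aut(\C)$ fixing $E'$, where $c(\tau\sigma)$ means $c$ evaluated at the restriction $(\tau\sigma)|_E$.

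The heart of the argument is therefore the following descent-type statement, which I would prove directly. For the ``only if'' direction: if $C$ comes from $\sum_j t_j\otimes e_j$ with $t_j\in E$, $e_j\in E'$, then $c(\sigma)=\sum_j\sigma(t_j)e_j$ for $\sigma\in\Sigma_E$, and for $\tau$ fixing $E'$ we compute $\tau(c(\sigma))=\sum_j(\tau\sigma)(t_j)\,\tau(e_j)=\sum_j(\tau\sigma)(t_j)\,e_j=c(\tau\sigma)$, using that $\tau\sigma$ restricted to $E$ is a well-defined element of $\Sigma_E$ and that $c$ depends only on that restriction. (One must note here that $\tau e_j=e_j$ because $e_j\in E'$ and $\tau|_{E'}=\mathrm{Id}$; this is exactly where the hypothesis on $\tau$ enters.) For the ``if'' direction: assume the equivariance $\tau(c(\sigma))=c(\tau\sigma)$. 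Fix a finite Galois extension $M/\Q$ containing $E$, and enlarge $E'$ inside $M$ if necessary; the equivariance says precisely that the function $\sigma\mapsto c(\sigma|_E)$ on $Gal(M/\Q)$ is invariant under left multiplication by $Gal(M/E')$. By Galois descent for the étale algebra $E\otimes E'$ — concretely, $(E\otimes_\Q \bar\Q)^{Gal(\bar\Q/E')}=E\otimes_\Q E'$, translated through $E\otimes\bar\Q\cong\prod_{\Sigma_E}\bar\Q$ — such an invariant tuple lies in $E\otimes E'$. This is the step I expect to be the main (if modest) obstacle: making the identification of the $Gal(M/E')$-fixed points of $\prod_{\Sigma_E}M$ with $E\otimes E'$ fully precise, keeping careful track of whether one works with $G_\Q$ or $Aut(\C)$ and of the fact that $\Sigma_E$ is a quotient of either group.

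Finally I would reassemble: applying the criterion to $C=AB^{-1}$ shows $AB^{-1}\in(E\otimes E')^\times$ iff the stated equivariance of $\tfrac{a(\sigma)}{b(\sigma)}$ holds; and the degenerate case $A=0$ corresponds on the right-hand side to all $a(\sigma)=0$, for which the displayed identity is trivially true (both sides are $0$, since $b$ never vanishes). Conversely, if the equivariance holds but $A\notin(E\otimes\C)^\times$, then some $a(\sigma_0)=0$, and equivariance forces $a(\tau\sigma_0)=\tau(0)=0$ for all $\tau$, hence — since $Aut(\C)$ acts transitively on $\Sigma_E$ via restriction — $a(\sigma)=0$ for all $\sigma$, i.e. $A=0$, landing in case (1) of the definition of $\sim_{E;E'}$. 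This closes the equivalence in all cases. The only genuinely delicate point beyond bookkeeping is Remark \ref{transitive}'s warning about non-transitivity, which does not intervene here because we argue with a single pair $(A,B)$ throughout.
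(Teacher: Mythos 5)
Your main reduction — passing through the ring isomorphism $E\otimes\C\cong\C^{\Sigma_E}$, identifying $AB^{-1}$ with the coordinatewise quotient, and then characterising membership in $E\otimes E'$ by Galois descent via $(E\otimes_\Q\bar\Q)^{G_{E'}}=E\otimes_\Q E'$ — is exactly the content behind the paper's one-line remark preceding the lemma, and the two directions of the descent criterion are correct as you set them up. Two small imprecisions do not affect correctness: the function $\sigma\mapsto c(\sigma)$ is not \emph{invariant} under left multiplication by $Gal(M/E')$, it is $\tau$-\emph{semilinear} (that is the point of the twisted fixed-point computation), and the phrase ``enlarge $E'$ inside $M$'' is unnecessary since $E'\subset E\subset M$ already. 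You should also record explicitly that the equivariance hypothesis forces each $c(\sigma)$ to lie in $\bar\Q$ (take $\tau\in Aut(\C/\bar\Q)$; then $\tau\sigma|_E=\sigma|_E$, so $\tau(c(\sigma))=c(\sigma)$), otherwise the passage from $\C$ to $\bar\Q$ in the descent is unjustified.

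The genuine gap is in your treatment of the degenerate case. You argue that if the equivariance holds and some $a(\sigma_0)=0$, then $a(\tau\sigma_0)=\tau(0)=0$ ``for all $\tau$'', and you conclude $A=0$ because $Aut(\C)$ acts transitively on $\Sigma_E$. But the equivariance only applies for $\tau$ with $\tau|_{E'}=\mathrm{Id}$, so you only learn that $a$ vanishes on the $Aut(\C/E')$-orbit of $\sigma_0$ in $\Sigma_E$. When $E'\neq\Q$ this orbit is a proper subset of $\Sigma_E$. Concretely, take $E=E'=\Q(\sqrt 2)$, $\Sigma_E=\{\sigma_0,\sigma_1\}$, and set $a(\sigma_0)=0$, $a(\sigma_1)=1$, $b\equiv 1$: the displayed equivariance holds (every $\tau$ fixing $E'$ sends $\sigma_0$ to $\sigma_0$ and $\sigma_1$ to $\sigma_1$ and $\tau(0)=0$, $\tau(1)=1$), yet $A=(0,1)$ is neither $0$ nor a unit, so $A\not\sim_{E;E'}B$. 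Thus the ``if'' direction cannot be closed the way you describe; to make the argument (and, indeed, the lemma) airtight in the degenerate case one either needs $E'=\Q$, so that the acting group really is all of $Aut(\C)$, or one must add the hypothesis that the $a(\sigma)$ are either all zero or all nonzero — which is the situation in every actual application in the paper, where $a$ is a tuple of conjugate $L$-values.
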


\begin{rem}

If we have moreover $\cfrac{a(\sigma)}{b(\sigma)}\in \bar{\Q}$ for all $\sigma$, we can replace $Aut(\C)$ by $G_{\Q}$ in the above lemma.
\end{rem}

In this article, we shall consider the action of $G_{K}:=Gal(\overline{\Q}/K)$ but not the action of $G_{\Q}$ where $K$ is a quadratic imaginary field. We fix $K$ a quadratic imaginary field and an embedding $K\hookrightarrow \bar{\Q}$. We write $\iota: K\hookrightarrow \bar{\Q}$ its complex conjugation, i.e. $\iota(z)=\bar{z}$ for all $z\in K$ where $c$ or $\bar{\text{ }}$ stands for complex conjugation.

Let $E$ be a number field containing $K$. We  denote $\Sigma_{E;K}$ the set of embeddings of $E$ into $\C$ which is trivial on $K$. For $( a(\sigma) ) _{\sigma\in G_{K}}$ some complex numbers which satisfy that $a(\sigma)=a(\sigma')$ if $\sigma|_{E}=\sigma'|_{E}$ for any $\sigma,\sigma'\in G_{K}$, we can define $a(\sigma)$ for $\sigma\in \Sigma_{E;K}$ by taking $\tilde{\sigma}$ any lift of $\sigma$ in $G_K$ and defining $a(\sigma)$ to be $a(\tilde{\sigma})$. Let $b(\sigma)_{\sigma \in G_{K}}$ be some complex numbers with the same property. We assume $b(\sigma)\neq 0$ for all $\sigma\in G_{K}$.

\begin{dflem}
We fix $\sigma_{0}\in \Sigma_{E;K}$ and assume $\cfrac{a(\sigma_{0})}{b(\sigma_{0})}\in \overline{\Q}$. 

We have $(a(\sigma))_{\sigma\in \Sigma_{E;K}}\sim_{E;K} (b(\sigma))_{\sigma\in \Sigma_{E;K}}$ if and only if

$$\tau\left(\cfrac{a(\sigma_{0})}{b(\sigma_{0})}\right)=\cfrac{a(\tau\sigma_{0})}{b(\tau\sigma_{0})}$$
 for all $\tau\in G_{K}$.

In this case, we say $a \sim_{E} b$ \textbf{equivariant under action of} $G_{K}$.
In particular, we have $\cfrac{a(\sigma)}{b(\sigma)}\in E$ for all $\sigma\in G_{K}$. 
\end{dflem}

\begin{rem}
If we fix $\sigma_{0}:E\hookrightarrow \overline{\Q}$ and identify $E$ with its image, the condition $\tau\left(\cfrac{a(\sigma_{0})}{b(\sigma_{0})}\right)=\cfrac{a(\tau\sigma_{0})}{b(\tau\sigma_{0})}$ implies that $\cfrac{a(\sigma_{0})}{b(\sigma_{0})}\in E$. Therefore, $a(\sigma_{0})\sim_{E} b(\sigma_{0})$ in the sense that the two complex numbers equal up to multiplication by an element in $E$.

Bearing in mind that the notation $(a(\sigma))_{\sigma\in \Sigma_{E;K}}\sim_{E;K} (b(\sigma))_{\sigma\in \Sigma_{E;K}}$ is the same with that $a \sim_{E} b$ equivariant under action of $G_{K}$. Both notation will be used for reasons of convenience.

\end{rem}
\end{subsection}

\begin{subsection}{CM period}\label{alpha}

Let $\chi$ be a motivic critical character of a CM field $F$. For any $\Psi\subset \Sigma_{F}$ such that $\Psi\cap \iota\Psi=\varnothing$,  one can associate a non zero complex number $p_{F}(\chi,\Psi)$ which is well defined modulo $E(\chi)^{\times}$ (c.f. the appendix of \cite{harrisappendix}). We call it a \textbf{CM period}. Sometimes we write $p(\chi,\Psi)$ instead of $p_{F}(\chi,\Psi)$ if there is no ambiguity concerning the base field $F$.

The CM periods have many good properties. We list below some of them which will be useful in the following sections.

\begin{prop}\label{propCM}

Let $F$ be a quadratic imaginary extension of a totally real field $F^{+}$ as before. Let $F_{0}\subset F$ be a CM field.

Let $\eta$ be a motivic critical Hecke character of $F_{0}$, $\chi$, $\chi_{1}$, $\chi_{2}$ be motivic critical Hecke characters of $F$, $\tau\in \Sigma_{F}$ an embedding of $F$ into $\bar{\Q}$ and $\Psi$ a subset of $\Sigma_{F}$ such that $\Psi\cap \iota\Psi=\varnothing$. We take $\Psi=\Psi_{1}\sqcup\Psi_{2}$ a partition of $\Psi$. We then have:

\begin{equation}\label{charmulti}
p((\chi_{1}\chi_{2})^{\sigma}, \Psi^{\sigma})_{\sigma\in \Sigma_{E(\chi_{1})E(\chi_{2})}} \sim _{E(\chi_{1})E(\chi_{2})} ( p(\chi_{1}^{\sigma}, \Psi^{\sigma})p(\chi_{2}^{\sigma}, \Psi^{\sigma})  ))_{\sigma\in \Sigma_{E(\chi_{1})E(\chi_{2})}}.
\end{equation}

\begin{equation}\label{separateCMtype}
(p(\chi^{\sigma}, \Psi_{1}^{\sigma}\sqcup\Psi_{2}^{\sigma}))_{\sigma\in \Sigma_{E(\chi)}}  \sim _{E(\chi) } ( p(\chi^{\sigma}, \Psi_{1}^{\sigma})p(\chi^{\sigma},\Psi_{2}^{\sigma} ) )_{\sigma\in \Sigma_{E(\chi)}}.
\end{equation}

\begin{equation}
(p(\chi^{\sigma}, \iota\Psi^{\sigma}))_{\sigma\in \Sigma_{E(\chi)}}  \sim _{E(\chi)} (p((\chi^{\iota})^{\sigma}, \Psi^{\sigma}))_{\sigma\in \Sigma_{E(\chi)}}.
\end{equation}

\begin{equation}
(p_{F}((\eta\circ N_{\AF/\AFsub})^{\sigma},\tau^{\sigma}) )_{\sigma\in E(\eta)}\sim _{E(\eta)}  (p_{F_{0}}(\eta^{\sigma}, \tau|_{F_{0}}^{\sigma})  )_{\sigma\in E(\eta)}.
\end{equation}

\end{prop}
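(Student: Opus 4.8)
The plan is to deduce each of the four relations from a corresponding functoriality of the CM motive $M(\chi)$ attached to an algebraic Hecke character (equivalently, of the automorphic line bundle at the associated CM point), together with the compatibility of the de Rham and Betti rational structures under the relevant operations; all of this goes back to Shimura and Blasius and is collected in the appendix of \cite{harrisappendix} (see also \cite{harrisCMperiod}), so the proof really consists in recalling the right statements. Recall that for $\chi$ a motivic critical Hecke character of $F$ one has a rank-one CM motive $M(\chi)$ with coefficients in $E(\chi)$ whose realizations, after a suitable extension of scalars, split into lines indexed by the embeddings $\sigma\in\Sigma_F$; for a single $\sigma$ the CM period $p(\chi,\sigma)\in(E(\chi)\otimes\C)^\times$ (well defined modulo $E(\chi)^\times$) is the ratio of the $E(\chi)$-rational de Rham structure to the $E(\chi)$-rational Betti structure on the corresponding line, and for $\Psi\subset\Sigma_F$ with $\Psi\cap\iota\Psi=\varnothing$ one sets $p(\chi,\Psi)=\prod_{\sigma\in\Psi}p(\chi,\sigma)$. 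With this normalization \eqref{separateCMtype} is immediate, since a partition $\Psi=\Psi_{1}\sqcup\Psi_{2}$ merely factors the product.

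For \eqref{charmulti} I would use that the motive of $\chi_{1}\chi_{2}$ is a direct summand of $M(\chi_{1})\otimes M(\chi_{2})$ and that on each $\sigma$-line the de Rham (resp. Betti) structure of a tensor product is the tensor product of the structures; hence $p(\chi_{1}\chi_{2},\sigma)\sim_{E(\chi_{1})E(\chi_{2})}p(\chi_{1},\sigma)\,p(\chi_{2},\sigma)$ for every $\sigma$. The third relation comes from the action of the complex conjugation of $F$ on $M(\chi)$, which identifies the $\iota\sigma$-line of $M(\chi)$ with the $\sigma$-line of $M(\chi^{\iota})$ compatibly with both rational structures modulo $E(\chi)^\times$. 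For the last relation, if $\eta$ is a motivic critical character of $F_{0}\subset F$ then $M_{F}(\eta\circ N_{\AF/\AFsub})$ is the base change $M_{F_{0}}(\eta)\otimes_{F_{0}}F$ of motives along $F_{0}\hookrightarrow F$, so on the $\tau$-line (with $\tau|_{F_{0}}$ the restriction of $\tau$) the Betti and de Rham structures, hence the comparison period, are unchanged.

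It then remains to upgrade these coordinate-wise identities to the displayed relations in the relevant $E\otimes\C$, that is, to their $G_\Q$-equivariant form. For this I would invoke that the assignment $\chi\mapsto M(\chi)$ together with its two rational structures is Galois-equivariant, $M(\chi)^{\tau}\cong M(\chi^{\tau})$ over $E(\chi)$ for every $\tau\in G_\Q$ (this is exactly where Blasius's results are used). Transporting the chosen de Rham bases on $M(\chi_{1})$, $M(\chi_{2})$, $M(\chi)$, $M_{F_{0}}(\eta)$ by $\tau$ then shows that each algebraic ratio occurring in the definition of $\sim$ transforms under $\tau$ precisely as required, so Lemma \ref{anotherdef} assembles the coordinate-wise identities into the stated relations in $E\otimes\C$.

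I expect the only genuine obstacle to be the bookkeeping of rational structures and conventions: one must check that the comparison isomorphisms used above are defined over the coefficient field $E(\chi)$ (resp. $E(\chi_{1})E(\chi_{2})$, resp. $E(\eta)$) rather than merely over $\bar\Q$, and that the several normalizations in play — the choice of $M(\chi)$ when $\chi$ is not compatible with $\Psi$, the product formula $p(\chi,\Psi)=\prod_{\sigma\in\Psi}p(\chi,\sigma)$, the identification $\C^{\Sigma_{E}}\cong E\otimes\C$, and the diagonal embedding $E^\times\subset(E\otimes\C)^\times$ implicit in $\sim_{E}$ — are mutually consistent and stable under the relevant Galois group. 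No single step is difficult, but this is where an oversight would hide; the substance of the proposition is entirely the tensor-product, complex-conjugation and base-change functoriality of CM motives recalled above.
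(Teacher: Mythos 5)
Your proposal follows the same underlying philosophy as the paper — both reduce everything to the fact that the de~Rham and Betti rational structures are compatible with a small list of natural operations — but the way you package it diverges from the paper in one place that matters. The paper defines $p_F(\chi,\Psi)$ once and for all as the ratio $p(\chi,(T_F,h_\Psi))$ of rational structures on the line generated by $\chi$ in $H^{0}(Sh(T_F,h_\Psi),[V_\gamma])$; it then proves a single general lemma (Proposition \ref{propgeneral}: for a map of special Shimura data $u\colon(T',h')\to(T,h)$ and $\chi'=\chi\circ u$ one has $p(\chi,(T,h))\sim_{E(\chi)}p(\chi',(T',h'))$, essentially because pullback on coherent cohomology respects both rational structures), and all four relations of Proposition \ref{propCM} are specializations: the diagonal $T_F\to T_F\times T_F$ for \eqref{charmulti}, the multiplication $T_F\times T_F\to T_F$ with $h_{\Psi_1}\times h_{\Psi_2}\mapsto h_{\Psi_1\sqcup\Psi_2}$ for \eqref{separateCMtype}, complex conjugation for the third, and the norm $T_F\to T_{F_0}$ for the last. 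Your phrasing in terms of tensor product, conjugation and base change of CM motives is the mirror image of these, via the usual dictionary between rank-one CM motives and automorphic line bundles on zero-dimensional Shimura varieties, so for \eqref{charmulti}, the third, and the fourth relation you are doing essentially what the paper does, just in a different language.

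The genuine gap is in your treatment of \eqref{separateCMtype}. You write ``one sets $p(\chi,\Psi)=\prod_{\sigma\in\Psi}p(\chi,\sigma)$'' and declare the relation immediate from this normalization. But this is not how $p_F(\chi,\Psi)$ is defined in \cite{harrisappendix} or in the present paper: the period attached to a general $\Psi$ is defined in one stroke from the Shimura datum $(T_F,h_\Psi)$, not as a product over singletons. The assertion that this globally-defined period factors as $\prod_{\sigma\in\Psi}p_F(\chi,\{\sigma\})$ is exactly the content of \eqref{separateCMtype} (applied inductively, with the singleton case as base), so taking the product formula as a definition begs the question. In the paper this factorization is a theorem, obtained by applying Proposition \ref{propgeneral} to the multiplication map $T_F\times T_F\to T_F$ sending $h_{\Psi_1}\times h_{\Psi_2}$ to $h_{\Psi_1\sqcup\Psi_2}$, combined with the obvious product decomposition for the Shimura datum $(T_F\times T_F,\,h_{\Psi_1}\times h_{\Psi_2})$. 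You would need to either adopt the paper's definition and supply this argument, or, if you want to keep the product as your definition, explicitly prove that it agrees modulo $E(\chi)^\times$ with the Shimura-datum definition — which is the same work in disguise. A smaller but related point: you call $M(\chi)$ a rank-one motive with coefficients in $E(\chi)$ whose realizations ``split into lines indexed by $\sigma\in\Sigma_F$''; this conflates the motive over $F$ (rank one, Betti realizations indexed by embeddings of $F$) with its restriction of scalars to $\Q$ (rank $[F:\Q]$), and since the de~Rham realization over $F$ is an $F\otimes E(\chi)$-module rather than a sum of $E(\chi)$-lines indexed by $\Sigma_F$, the ``$\sigma$-line'' on which you compare structures needs a more careful description before the coordinate-wise argument can even be stated. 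None of this is fatal — it is the same functoriality throughout — but as written the key relation \eqref{separateCMtype} is assumed rather than proved.
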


\begin{rem}

\begin{enumerate}
\item The first three formulas come from Proposition $1.4$, Corollary $1.5$ and Lemma $1.6$ in \cite{harrisCMperiod}. The last formula is a variation of the Lemma $1.8.3$ in \textit{loc.cit}. The idea was explained in the proof of Proposition $1.4$ in \textit{loc.cit}.
\item The Galois group $G_{\Q}$ acts on $\Sigma_{F}$ by composition. Since we have assumed that $E(\chi)$ contains a normal closure of $F$, we see that if $\sigma\in G_{\Q}$ fixes $E(\chi)$, then it acts trivially on $\Sigma_{F}$. Therefore, we may define an action of $\Sigma_{E(\chi)}$ on $\Sigma_{F}$.
\end{enumerate}
\end{rem}

Before proving the above proposition, let us introduce the definition for period associated to special Shimura datum.

Let $(T,h)$ be a Shimura datum where $T$ is a torus defined over $\Q$ and $h:Res_{\C/\R}\mathbb{G}_{m,\C}\rightarrow G_{\R}$ a homomorphism satisfying the axioms defining a Shimura variety. Such pair is called a \textbf{special} Shimura datum. Let $Sh(T,h)$ be the associated Shimura variety and $E(T,h)$ be its reflex field.

Let $(\gamma,V_{\gamma})$ be a one-dimensional algebraic representation of $T$ (the representation $\gamma$ is denoted by $\chi$ in \cite{harrisappendix}). We denote by $E(\gamma)$ a definition field for $\gamma$. We may assume that $E(\gamma)$ contains $E(T,h)$. Suppose that $\gamma$ is motivic (see \textit{loc.cit} for the notion). We know that $\gamma$ gives an automorphic line bundle $[V_{\gamma}]$ over $Sh(T,h)$ defined over $E(\gamma)$. Therefore, the complex vector space $H^{0}(Sh(T,h),[V_{\gamma}])$ has an $E(\gamma)$-rational structure (for the definition of a rational structure, see section \ref{automorphicperiod}), denoted by $M_{DR}(\gamma)$ and called the De Rham rational structure.

On the other hand, the canonical local system $V_{\gamma}^{\triangledown}\subset [V_{\gamma}]$ gives another $E(\gamma)$-rational structure $M_{B}(\gamma)$ on $H^{0}(Sh(T,h),[V_{\gamma}])$, called the Betti rational structure.

We now consider $\chi$ an algebraic Hecke character of $T(\AQ)$ with infinity type $\gamma^{-1}$ (our character $\chi$ corresponds to the character $\omega^{-1}$ in \textit{loc.cit}). Let $E(\chi)$ be the number field generated by the values of $\chi$ on $T(\AQ \text{}_{,f})$ over $E(\gamma)$. We know $\chi$ generates a one-dimensional complex subspace of $H^{0}(Sh(T,h),[V_{\gamma}])$ which inherits two $E(\chi)$-rational structure, one from $M_{DR}(\gamma)$, the other from $M_{B}(\gamma)$. Put $p(\chi,(T,h))$ the ratio of these two rational structures which is well defined modulo $E(\chi)^{\times}$.

Suppose that we have two tori $T$ and $T'$ both endowed with a Shimura datum $(T,h)$ and $(T',h')$. Let $u:(T',h')\rightarrow (T,h)$ be a map between the Shimura data. Let $\chi$ be an algebraic Hecke character of $T(\AQ)$. We put $\chi':=\chi\circ u$ an algebraic Hecke character of $T'(\AQ)$. Since both the Betti structure and the De Rham structure commute with the pullback map on cohomology, we have the following proposition:

\begin{prop}\label{propgeneral}
Let $\chi$, $(T,h)$ and $\chi'$, $(T',h')$ be as above. We have:
$$(p(\chi,(T,h)))_{\sigma\in \Sigma_{E(\chi)}}\sim_{E(\chi)} (p(\chi',(T',h')))_{\sigma \in \Sigma_{E(\chi)}}$$
\end{prop}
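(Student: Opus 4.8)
The plan is to reduce the statement to the naturality of the two rational structures $M_{DR}$ and $M_B$ under pullback along a morphism of Shimura data. Concretely, given $u\colon (T',h')\to(T,h)$ and $\chi'=\chi\circ u$, the map $u$ induces a morphism of Shimura varieties $Sh(u)\colon Sh(T',h')\to Sh(T,h)$, and a morphism of automorphic line bundles $Sh(u)^*[V_\gamma]\to[V_{\gamma}]$ (here $\gamma'=\gamma\circ u$ is the infinity type of $\chi'^{-1}$, and $E(\gamma)$ may be enlarged so that it also serves as a field of definition for $\gamma'$ and contains $E(T',h')$). This yields a pullback map on global sections $Sh(u)^*\colon H^0(Sh(T,h),[V_\gamma])\to H^0(Sh(T',h'),[V_{\gamma'}])$, and the content of the proposition is that this map sends the $\chi$-line to the $\chi'$-line and is compatible with both rational structures.

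The key steps, in order, are as follows. First, I would check that $Sh(u)^*$ carries the one-dimensional subspace of $H^0(Sh(T,h),[V_\gamma])$ spanned by $\chi$ to the subspace of $H^0(Sh(T',h'),[V_{\gamma'}])$ spanned by $\chi'=\chi\circ u$; this is essentially the functoriality of the identification of Hecke-eigen-lines in the cohomology of zero-dimensional Shimura varieties attached to tori with characters, and follows by unwinding the adelic description of these sections. Second, I would invoke the fact that the De Rham rational structure $M_{DR}$, being defined by the algebraic model of $[V_\gamma]$ over $E(\gamma)$, is preserved by the algebraic pullback $Sh(u)^*$ defined over $E(\gamma)$; hence $M_{DR}(\gamma)$ maps into $M_{DR}(\gamma')$. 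Third, likewise, the Betti structure $M_B$ comes from the local system $V_\gamma^\triangledown$, and $Sh(u)^*$ of a horizontal (flat) section is horizontal, so $M_B(\gamma)$ maps into $M_B(\gamma')$. Comparing the two structures on the $\chi$-line before and after pullback then gives $p(\chi',(T',h'))=p(\chi,(T,h))\cdot(\text{unit in }E(\chi)^{\times})$, i.e. the two complex numbers agree modulo $E(\chi)^\times$; finally, since the whole construction is defined over $E(\chi)$ and every $\sigma\in\Sigma_{E(\chi)}$ (equivalently every lift in $G_{\mathbb{Q}}$) transports the datum $(\chi,(T,h))$ to $(\chi^\sigma,(T,h))$ and $(\chi',(T',h'))$ to $((\chi')^\sigma,(T',h'))$ compatibly with $u$, the relation holds $\sigma$-equivariantly, which is exactly the assertion $(p(\chi,(T,h)))_\sigma\sim_{E(\chi)}(p(\chi',(T',h')))_\sigma$.

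The main obstacle I anticipate is not any single hard computation but rather the bookkeeping needed to make the functoriality statements precise: one must be careful that $E(\gamma)$ can be chosen simultaneously as a field of definition for $[V_\gamma]$, for $[V_{\gamma'}]$, and containing both reflex fields $E(T,h)$ and $E(T',h')$, so that $Sh(u)$ and the map of line bundles are genuinely defined over $E(\gamma)$; and that the normalization of the $\chi$-line inside $H^0$ (i.e. which adelic translate of a section one picks) is chosen compatibly on the two sides so that $Sh(u)^*$ really does match them up and not merely up to an a priori transcendental scalar. Once the functoriality of $[V_\gamma]\mapsto Sh(u)^*[V_\gamma]$ and of the two rational structures is granted — which is standard, since both structures are built functorially out of the canonical model and the canonical local system — the proof is a short diagram chase, and the $G_K$-equivariance (or $\Sigma_{E(\chi)}$-indexing) is automatic because nothing in the construction depends on the chosen embedding $\bar{\mathbb{Q}}\hookrightarrow\mathbb{C}$ beyond the labelling.
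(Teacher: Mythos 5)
Your proposal is correct and takes essentially the same approach as the paper: the paper's argument is precisely the one-sentence observation that both the Betti and the De Rham rational structures commute with the pullback on $H^0$ induced by a morphism of special Shimura data (together with the observation, made in the following remark, that passing to the $G_{\Q}$-equivariant family improves the relation from $\sim_{E(\chi);E(T,h)}$ of \cite{harrisCMperiod} Prop.\ 1.4 to $\sim_{E(\chi)}$). You have simply unwound that sentence into its constituent steps — functoriality of the $\chi$-line, algebraicity of $Sh(u)^*$ over a common field of definition, flatness of pullback for the local system, and compatibility of the whole construction with $\Sigma_{E(\chi)}$-indexing. (Minor typo: the target of your line-bundle pullback map should be $[V_{\gamma'}]$, not $[V_\gamma]$.)
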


\begin{rem}
In Proposition $1.4$ of \cite{harrisCMperiod}, the relation is up to $E(\chi);E(T,h)$ where $E(T,h)$ is a number field associated to $(T,h)$. Here we consider the action of $G_{\Q}$ and can thus obtain a relation up to $E(\chi)$ (see the paragraph after Proposition $1.8.1$ of \textit{loc.cit}).
\end{rem}

For $F$ a CM field and $\Psi$ a subset of $\Sigma_{F}$ such that $\Psi\cap \iota\Psi=\varnothing$, we can define a Shimura datum $(T_{F},h_{\Psi})$ where $T_{F}:=Res_{F/\Q}\mathbb{G}_{m,F}$ is a torus and $h_{\Psi}:Res_{\C/\R}\mathbb{G}_{m,\C} \rightarrow T_{F,\R}$ is a homomorphism such that over $\sigma\in \Sigma_{F}$, the Hodge structure induced on $F$ by $h_{\Psi}$ is of type $(-1,0)$ if $\sigma\in \Psi$, of type $(0,-1)$ if $\sigma\in \iota\Psi$, and of type $(0,0)$ otherwise. By definition, $p_{F}(\chi,\Psi)=p(\chi,(T_{F},h_{\Psi}))$.
We can now prove Proposition \ref{propCM}:
\begin{proof}
We keep the above notation. All the equations in Proposition \ref{propCM} come from Proposition \ref{propgeneral} by certain maps between Shimura data as follows:
\begin{enumerate}
\item The diagonal map $ (T_{F},h_{\Psi})\rightarrow (T_{F}\times T_{F},h_{\Psi}\times h_{\Psi})$ pulls $(\chi_{1},\chi_{2})$ back to $\chi_{1}\chi_{2}$.
\item The multiplication map $T_{F} \times T_{F} \rightarrow T_{F}$ sends $h_{\Psi_{1}}$, $h_{\Psi_{2}}$ to $h_{\Psi_{1}\sqcup \Psi_{2}}$.
\item The complex conjugation $H_{F}\rightarrow H_{F}$ sends $h_{\Psi}$ to $h_{\iota \Psi}$.
\item The norm map $(T_{F},h_{\{\tau\}})\rightarrow (T_{F_{0}},h_{\{\tau|_{F_{0}}\}})$ pulls $\eta$ back to $\eta\circ N_{\AF/\mathbb{A}_{F,0}}$.
\end{enumerate}
\end{proof}


\bigskip
The special values of an $L$-function for a Hecke character over a CM field can be interpreted in terms of CM periods. The following theorem is proved by Blasius. We state it as in Proposition $1.8.1$ in \cite{harrisCMperiod} where $\omega$ should be replaced by $\check{\omega}:=\omega^{-1,c}$ (for this erratum, see the notation and conventions part on page $82$ in the introduction of \cite{harris97}),

\begin{thm}\label{blasius}
Let $F$ be a CM field and $F^{+}$ be its maximal totally real subfield. Put $n$ the degree of $F^{+}$ over $\Q$.

Let $\chi$ be a motivic critical algebraic Hecke character of $F$ and $\Phi_{\chi}$ be the unique CM type of $F$ which is compatible with $\chi$.

For $m$ a critical value of $\chi$ in the sense of Deligne (c.f. Lemma \ref{critical}), we have
$$L(\chi,m) \sim_{E(\chi)} D_{F^{+}}^{1/2}(2\pi i)^{mn}p(\check{\chi},\Phi_{\chi})$$
equivariant under action of $G_{\Q}$ where $D_{F^{+}}$ is the absolute discriminant of $F^{+}$.
\end{thm}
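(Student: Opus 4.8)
The statement is a theorem of Blasius (it is the special case of Deligne's conjecture for the motive attached to $\chi$), and the plan is to reproduce that argument. First I would attach to $\chi$ its motivic avatar: a rank-one motive $M(\chi)$ over $F$ with coefficients in $E(\chi)$, whose Betti realization at each $\sigma_{i}\in\Sigma_{F}$ is one-dimensional and whose Hodge filtration has the single jump forced by the infinity type $\prod_{i}\sigma_{i}(z)^{a_{i}}\bar\sigma_{i}(z)^{b_{i}}$, so that the Hodge type at $\sigma_{i}$ is governed by the pair $(a_{i},b_{i})$ and the CM type $\Phi_{\chi}$ picks out exactly the ``holomorphic'' embeddings (those with $a_{i}<b_{i}$). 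Taking the Weil restriction $\tilde M:=\mathrm{Res}_{F/\Q}M(\chi)$ gives a rank-$n$ motive over $\Q$ with coefficients in $E(\chi)$ whose $L$-function is $L(s,\chi)$, and which is critical at $s=m$ precisely when $m$ is a critical value of $\chi$ in the sense of Deligne; this last point is Lemma \ref{critical}.

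Next I would identify the Deligne period $c^{+}(\tilde M(m))$. Since $M(\chi)$ has rank one, the comparison isomorphism over each $\sigma_{i}$ is multiplication by a single complex number, and Shimura's theory of CM periods, in the reformulation through special Shimura data given in the appendix of \cite{harrisappendix} and recalled above, identifies that number up to $E(\chi)^{\times}$ with $p(\check\chi,\sigma_{i})$ when $\sigma_{i}\in\Phi_{\chi}$, and with a product of $p(\check\chi,\bar\sigma_{i})$ with a power of $2\pi i$ otherwise. Collecting these contributions via the multiplicativity of CM periods over a partition of the CM type (equation \eqref{separateCMtype}) assembles the holomorphic factors into $p(\check\chi,\Phi_{\chi})$. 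The remaining factors are of two kinds: the Tate twist by $m$, which multiplies the period by one factor of $2\pi i$ per archimedean place of $F^{+}$ (equivalently per pair $\{\sigma_{i},\bar\sigma_{i}\}$), giving the exponent $mn$; and the contribution of the determinant motive $\det\tilde M$, which up to a Tate twist is $\mathrm{Res}_{F/\Q}$ of the trivial motive, whose Deligne period is the Gauss-sum/discriminant factor $D_{F^{+}}^{1/2}$ appearing in Deligne's $\delta$-term for a Weil restriction.

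I would then invoke Blasius's theorem that Deligne's conjecture holds for $\tilde M$, that is $L(\tilde M,m)\sim_{E(\chi)}c^{+}(\tilde M(m))$; combined with the period computation above this gives $L(\chi,m)\sim_{E(\chi)}D_{F^{+}}^{1/2}(2\pi i)^{mn}p(\check\chi,\Phi_{\chi})$. For the $G_{\Q}$-equivariance I would use that Blasius's result, like Deligne's conjecture, is established $\mathrm{Aut}(\C)$-equivariantly: every $\tau$ carries the ratio $L(\chi,m)\big/\big(D_{F^{+}}^{1/2}(2\pi i)^{mn}p(\check\chi,\Phi_{\chi})\big)$ to the corresponding ratio for $\chi^{\tau}$, since the $L$-value, the CM periods (Proposition \ref{propCM}), and the discriminant and power of $2\pi i$ all transform in the expected way; by Lemma \ref{anotherdef} this is exactly the asserted equivariance.

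I expect the main obstacle to be bookkeeping rather than anything conceptual: pinning down the exact power of $2\pi i$ and the precise shape of the discriminant factor demands care with Deligne's normalizations and with the twist $\check\chi=\chi^{-1,c}$ flagged in the statement, and one must verify that the CM type produced by the period computation is genuinely the $\Phi_{\chi}$ compatible with $\chi$ and not its complex conjugate. Because the cleanest reference already settles all of this, in practice I would simply cite Proposition $1.8.1$ of \cite{harrisCMperiod} (with the erratum replacing $\omega$ by $\check\omega$) together with Blasius's original work, and confine the new verification to the passage from the statement at the level of $G_{\Q}$.
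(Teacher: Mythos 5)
Your proposal takes the same route as the paper: the paper itself gives no proof of Theorem~\ref{blasius} but simply states it as Blasius's theorem in the form of Proposition~$1.8.1$ of \cite{harrisCMperiod} (with the $\omega\mapsto\check\omega$ erratum), and you correctly identify that citing that reference together with Blasius's original work is the intended argument, while your surrounding sketch of Blasius's proof via the motive $M(\chi)$, Weil restriction, and Deligne's period computation is a faithful outline of how the cited result is obtained. One small bookkeeping slip: $\mathrm{Res}_{F/\Q}M(\chi)$ has rank $2n$ over $\Q$ (since $[F:\Q]=2n$), not rank $n$, though this does not affect the rest of the sketch.
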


\begin{rem}\label{criticalforcharacter}
\text{}
\begin{enumerate}
\item Let $\{\sigma_{1},\sigma_{2},\cdots,\sigma_{n}\}$ be any CM type of $F$. Let $(\sigma_{i}^{a_{i}}\overline{\sigma}_{i}^{-w-a_{i}})_{1\leq i\leq n}$ denote the infinity type of $\chi$ with $w=w(\chi)$. We may assume $a_{1}\geq a_{2}\geq \cdots \geq a_{n}$. We define $a_{0}:=+\infty$ and $a_{n+1}:=-\infty$ and define $k:=max\{0\leq i\leq n\mid a_{i}>-\cfrac{w}{2}\}$. An integer $m$ is critical for $\chi$ if and only if

\begin{equation}\label{criticalvalue1}
       max(-a_{k}+1,w+1+a_{k+1})\leq m \leq min(w+a_{k},-a_{k+1})
\end{equation}
     (c.f. Lemma \ref{critical}).
\item $D_{F^{+}}^{1/2}$ is well defined up to multiplication by $\pm 1$. More generally, if $\{z_{1},z_{2},\cdots,z_{n}\}$ is any $\Q$-base of $L$, then $det(\sigma_{i}(z_{j}))_{1\leq i,j\leq n}\sim_{\Q} D_{F^{+}}^{1/2}$.

 \end{enumerate}
\end{rem}

\begin{lem}\label{lemmadiscriminant}
 If $F^{+}$ and $F'^{+}$  are two number fields with degree $n$ and $n'$ such that $F^{+}\cap F'^{+}=\Q$, then $D_{F^{+}F'^{+}}^{1/2}\sim_{\Q} (D_{F^{+}}^{1/2})^{n'}(D_{F'^{+}}^{1/2})^{n}$.
\end{lem}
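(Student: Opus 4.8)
The plan is to prove the multiplicativity relation $D_{F^{+}F'^{+}}^{1/2}\sim_{\Q} (D_{F^{+}}^{1/2})^{n'}(D_{F'^{+}}^{1/2})^{n}$ by exhibiting an explicit $\Q$-basis of the compositum $F^{+}F'^{+}$ and computing the determinant of the embedding matrix, invoking part (2) of Remark \ref{criticalforcharacter}, which says that for any $\Q$-basis $\{z_{1},\dots,z_{m}\}$ of a degree-$m$ number field $L$ one has $\det(\sigma_{i}(z_{j}))_{1\leq i,j\leq m}\sim_{\Q} D_{L}^{1/2}$. Since $F^{+}\cap F'^{+}=\Q$, the compositum $F^{+}F'^{+}$ has degree $nn'$ over $\Q$, and if $\{x_{1},\dots,x_{n}\}$ is a $\Q$-basis of $F^{+}$ and $\{y_{1},\dots,y_{n'}\}$ a $\Q$-basis of $F'^{+}$, then $\{x_{i}y_{j}\}_{1\leq i\leq n,\,1\leq j\leq n'}$ is a $\Q$-basis of $F^{+}F'^{+}$. (Strictly speaking $F^{+}\cap F'^{+}=\Q$ alone does not force the degree of the compositum to be $nn'$; but one of the two fields here is Galois over $\Q$ — or one reduces to that case — so linear disjointness holds and the product set is a basis. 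I would state this reduction explicitly.)

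First I would fix embeddings: let $\sigma_{1},\dots,\sigma_{n}\colon F^{+}\hookrightarrow\bar\Q$ and $\rho_{1},\dots,\rho_{n'}\colon F'^{+}\hookrightarrow\bar\Q$ be the complex embeddings, and let $\tau_{1},\dots,\tau_{nn'}\colon F^{+}F'^{+}\hookrightarrow\bar\Q$ be those of the compositum. Under linear disjointness, each $\tau$ restricts to some $\sigma_{i}$ on $F^{+}$ and some $\rho_{j}$ on $F'^{+}$, and the assignment $\tau\mapsto(\sigma_{i},\rho_{j})$ is a bijection of the $nn'$ embeddings with the pairs. Then the $(nn')\times(nn')$ matrix $\big(\tau_{k}(x_{a}y_{b})\big)$, with rows indexed by $\tau_{k}\leftrightarrow(\sigma_{i},\rho_{j})$ and columns by pairs $(a,b)$, factors as a Kronecker (tensor) product of the two matrices $(\sigma_{i}(x_{a}))_{1\leq i,a\leq n}$ and $(\rho_{j}(y_{b}))_{1\leq j,b\leq n'}$, because $\tau_{k}(x_{a}y_{b})=\sigma_{i}(x_{a})\rho_{j}(y_{b})$.

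Next I would apply the standard determinant formula for a Kronecker product: $\det(A\otimes B)=(\det A)^{n'}(\det B)^{n}$ when $A$ is $n\times n$ and $B$ is $n'\times n'$. Hence
$$\det\big(\tau_{k}(x_{a}y_{b})\big)=\big(\det(\sigma_{i}(x_{a}))\big)^{n'}\big(\det(\rho_{j}(y_{b}))\big)^{n}.$$
By Remark \ref{criticalforcharacter}(2) applied three times — to $F^{+}F'^{+}$, to $F^{+}$, and to $F'^{+}$ — the left-hand side is $\sim_{\Q}D_{F^{+}F'^{+}}^{1/2}$ while the right-hand side is $\sim_{\Q}(D_{F^{+}}^{1/2})^{n'}(D_{F'^{+}}^{1/2})^{n}$, and since $\sim_{\Q}$ is multiplicative on nonzero quantities and these determinants are all nonzero (the embedding matrices are invertible), we conclude.

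The only genuine subtlety — the main obstacle, such as it is — is the justification that $\{x_{i}y_{j}\}$ is actually a $\Q$-basis and that the embeddings of the compositum biject with pairs of embeddings of the factors; this is exactly linear disjointness of $F^{+}$ and $F'^{+}$ over $\Q$, which follows from $F^{+}\cap F'^{+}=\Q$ together with one of the fields being separable over $\Q$ (automatic in characteristic zero) and, for the full strength $[F^{+}F'^{+}:\Q]=nn'$, Galoisness of one of them (or else one should interpret the hypothesis as already including linear disjointness). Everything else is the purely formal Kronecker-product computation, which I would present compactly rather than in full index detail.
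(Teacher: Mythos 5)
Your proof is correct and follows essentially the same route as the paper: take product bases, identify the embedding matrix of the compositum as a Kronecker product of the two embedding matrices, apply $\det(A\otimes B)=(\det A)^{n'}(\det B)^{n}$, and invoke Remark \ref{criticalforcharacter}(2). Your parenthetical caveat that $F^{+}\cap F'^{+}=\Q$ alone does not guarantee $[F^{+}F'^{+}:\Q]=nn'$ is a fair observation that the paper's own proof silently assumes; in the paper's application both fields sit inside abelian CM extensions of $K$, so linear disjointness does hold there.
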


\begin{dem}
 Let$\sigma_{1},\cdots,\sigma_{n}$ denote all the embeddings of $F^{+}$ into $\C$ and $\sigma_{1}',\cdots, \sigma_{n'}'$ all the embeddings of $F'^{+}$ into $\C$. For $1\leq i\leq n$ and $1\leq j\leq n'$, define $\tau_{ij}: F^{+}F'^{+} \rightarrow \C$ with $\tau_{ij}|_{F^{+}}=\sigma_{i}$ and $\tau_{ij}|_{F'^{+}}=\sigma_{i}'$. We take $\{z_{1},z_{2},\cdots,z_{n}\}$ any $\Q$-base of $F$ and $\{w_{1},w_{2},\cdots,w_{n'}\}$ any $\Q$-base of $F'^{+}$. We know $\{z_{i}w_{j}\}_{1\leq i\leq n,1\leq j\leq n'}$ is a $\Q$-base of $F^{+}F'^{+}$. By the remark above, we have:
$$\begin{array}{lcl}
D_{F^{+}F'^{+}}^{1/2} &\sim_{\Q}& det(\tau_{kl}(z_{i}w_{j}))_{1\leq i\leq n,1\leq j\leq n';1\leq k\leq n,1\leq l\leq n'}\\
&\sim_{\Q}& det[\sigma_{k}(z_{i})\sigma'_{l}(w_{j})]_{1\leq i\leq n,1\leq j\leq n';1\leq k\leq n,1\leq l\leq n'}\\
&\sim_{\Q}& det[(\sigma_{k}(z_{i}))_{1\leq i,k\leq n}\otimes (\sigma'_{l}(z_{j}))_{1\leq j,l\leq n'}]\\
&\sim_{\Q}& det[(\sigma_{k}(z_{i}))_{1\leq i,k\leq n}\otimes I_{n'})\times ((\sigma'_{l}(z_{j}))_{1\leq j,l\leq n'}\otimes I_{n}]\\
&\sim_{\Q}& [det(\sigma_{k}(z_{i}))_{1\leq i,k\leq n}]^{n'}\times [det(\sigma'_{l}(z_{j}))_{1\leq j,l\leq n'}]^{n}\\
&\sim_{\Q}& (D_{F^{+}}^{1/2})^{n'}(D_{F'^{+}}^{1/2})^{n}.

\end{array}$$

\end{dem}

\bigskip

In this article, we consider the CM fields which contain the quadratic imaginary field $K$. 

Let $F^{+}$ be a totally real field of degree $n$ over $\Q$. Put $F:=F^{+}K$ a CM field. In the following, we denote by $\sigma_{1},\cdots, \sigma_{n}$ the complex embeddings of $F$ which are trivial on $K$. We then have $\Sigma_{F}=\{\sigma_{1},\sigma_{2},\cdots,\sigma_{n}\}\cup \{\overline{\sigma}_{1},\overline{\sigma}_{2},\cdots,\overline{\sigma}_{n}\}$. In particular, $\{\sigma_{1},\sigma_{2},\cdots,\sigma_{n}\}$ is a CM type of $F$.

We will need the following auxiliary algebraic Hecke character later:
\begin{lem}\label{psi}
There exists $\psi$ an algebraic Hecke character of $K$ with infinity type $z^{1}\overline{z}^{0}$ such that $\psi\psi^{c}=||\cdot||_{\AK}$.
\end{lem}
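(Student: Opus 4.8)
The plan is to construct $\psi$ by first building a suitable algebraic Hecke character of $\Q$ (or directly of $K$) with the prescribed infinity type and then correcting it by a finite-order character so that the relation $\psi\psi^{c}=\|\cdot\|_{\AK}$ holds. The key observation is that the norm character $N_{K/\Q}:\AK^{\times}\to\AQ^{\times}$ composed with $\|\cdot\|_{\AQ}$ gives $\|\cdot\|_{\AK}$, so it suffices to produce a character $\psi$ of $K$ with $\psi\psi^{c}=\|\cdot\|_{\AK}$ and infinity type $z^{1}\bar z^{0}$; one then checks algebraicity is automatic from the infinity type being of the stated monomial form.

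First I would recall the standard fact (Weil, or Shimura) that for a quadratic imaginary field $K$ there exists an algebraic Hecke character $\psi_{0}$ of $K$ of infinity type $z^{1}\bar z^{0}$; this is the ``canonical'' CM Hecke character attached to $K$, whose existence uses that the infinity type $z\mapsto z$ is trivial on the totally real subfield $\Q$ and hence satisfies the compatibility condition of Weil's theorem on the existence of Hecke characters with prescribed infinity type. Concretely one can take the Hecke character attached to an elliptic curve with CM by an order in $K$, or invoke that $z^{a}\bar z^{b}$ is the infinity type of some algebraic Hecke character of $K$ whenever $a+b$ is constant (here trivially so, with $a=1,b=0$). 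Then $\psi_{0}\psi_{0}^{c}$ has infinity type $z^{1}\bar z^{0}\cdot \bar z^{1}z^{0}=z\bar z=N_{K/\Q}(z)$, which is exactly the infinity type of $\|\cdot\|_{\AK}^{-1}$ up to normalization conventions — more precisely $\psi_{0}\psi_{0}^{c}$ and $\|\cdot\|_{\AK}$ (or its inverse, depending on the sign convention for $\|\cdot\|$) differ by a Hecke character of $K$ that is trivial on $K^{\times}$ and has trivial infinity type, i.e. a finite-order character $\nu$ of $K$.

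Next I would dispose of the finite-order discrepancy $\nu$. Since $\nu$ factors through a finite quotient of $K^{\times}\backslash\AK^{\times}$, and since $\nu\circ N_{K/\Q}$ would not help directly, I instead use that any finite-order Hecke character of $K$ of the form $\mu^{c}/\mu$ can be absorbed: writing $\nu=\mu^{2}$ is in general impossible (the $2$-torsion obstruction), so the correct move is to note that $\nu$ arises as $\nu = \mu\mu^{c}$ for some finite-order $\mu$ precisely when $\nu$ restricted to $\Q^{\times}\backslash\AQ^{\times}$ is a norm; a cleaner route is to observe that we have freedom in the choice of $\psi_{0}$ by any finite-order character, and that $\|\cdot\|_{\AK}=\|\cdot\|_{\AQ}\circ N_{K/\Q}$ has a square root on the relevant quotient because $\|\cdot\|_{\AQ}$ does (namely $\|\cdot\|_{\AQ}^{1/2}$ makes sense on ideles, composed with $N_{K/\Q}$). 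Then $\psi:=\psi_{0}\cdot(\text{a finite-order twist chosen so that }\psi\psi^{c}=\|\cdot\|_{\AK})$ works; the existence of the needed finite-order twist reduces to surjectivity of the map $\mu\mapsto \mu\mu^{c}$ on finite-order characters onto those finite-order characters trivial on $\Q^{\times}\backslash\AQ^{\times}$, which follows from Hilbert 90 / the exact sequence of class field theory for $K/\Q$.

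The main obstacle I anticipate is precisely this last descent step: ensuring that the finite-order ``error'' between $\psi_{0}\psi_{0}^{c}$ and $\|\cdot\|_{\AK}$ can be killed by modifying $\psi_{0}$, which amounts to a cohomological surjectivity statement about characters of $\AK^{\times}/K^{\times}$ versus characters pulled back from $\AQ^{\times}/\Q^{\times}$ — there could be a genuine $2$-torsion obstruction, and one must check it vanishes (or choose the normalization of $\|\cdot\|$ so that it does). Everything else — existence of an algebraic Hecke character with a given monomial infinity type, computation of infinity types of products and of $\psi^{c}$, and the identity $\|\cdot\|_{\AK}=\|\cdot\|_{\AQ}\circ N_{K/\Q}$ — is routine. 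I would therefore structure the writeup as: (i) cite/construct $\psi_{0}$ with infinity type $z\bar z^{0}$; (ii) compute that $\psi_{0}\psi_{0}^{c}\,\|\cdot\|_{\AK}^{-1}$ is finite order; (iii) show this finite-order character is of the form $\mu\mu^{c}$ for finite-order $\mu$ and set $\psi:=\psi_{0}\mu^{-1}$.
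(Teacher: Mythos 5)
The paper's own proof is a one-line citation to Lemma $4.1.4$ of \cite{CHT}, so you are attempting a direct construction that the paper does not spell out. Your outline — construct some algebraic $\psi_0$ with infinity type $z^1\bar z^0$, observe that $\nu:=\psi_0\psi_0^c\|\cdot\|_{\AK}^{-1}$ has trivial infinity type and is hence of finite order, and then try to kill $\nu$ by replacing $\psi_0$ with $\psi_0\mu^{-1}$ where $\nu=\mu\mu^c$ — is indeed the standard way to prove such statements and is sound in spirit.

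The gap is in step (iii), and you flag it yourself without resolving it. You worry about a ``$2$-torsion obstruction'' and state a criterion (``$\nu=\mu\mu^c$ precisely when $\nu|_{\Q^\times\backslash\AQ^\times}$ is a norm'') which I cannot parse into a correct statement; the paragraph then drifts through an attempted shortcut via $\|\cdot\|_{\AQ}^{1/2}\circ N_{K/\Q}$, which is a dead end because that character is not algebraic (its infinity type is $(z\bar z)^{1/2}$). What actually makes step (iii) go through is the following observation, which you do not make: the error character $\nu$ automatically factors through the norm map $N_{K/\Q}:\AK^\times\to\AQ^\times$. Indeed $\psi_0\psi_0^c(z)=\psi_0(z)\psi_0(\bar z)=\psi_0(z\bar z)=\psi_0(N(z))$ and $\|z\|_{\AK}=\|N(z)\|_{\AQ}$, so $\nu=\lambda\circ N$ with $\lambda=\psi_0|_{\AQ^\times}\cdot\|\cdot\|_{\AQ}^{-1}$, a finite-order Hecke character of $\Q$. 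In particular $\nu$ is genuinely trivial on the norm-one subgroup $\{z:z\bar z=1\}$, not merely $2$-torsion there, so the obstruction you worry about simply does not arise. Once $\nu=\lambda\circ N$, one extends $\lambda$ from $\AQ^\times/\Q^\times$ to a finite-order character $\mu$ of $\AK^\times/K^\times$ (using divisibility of the target, plus a small adjustment at infinity to keep $\mu$ of finite order); then $\mu\mu^c(z)=\mu(N(z))=\lambda(N(z))=\nu(z)$, and $\psi:=\psi_0\mu^{-1}$ has the required infinity type $z^1\bar z^0$ and satisfies $\psi\psi^c=\|\cdot\|_{\AK}$. So the approach works, but as written your argument for the crucial last reduction is both incomplete and partially misstated; the fix is to notice that $\nu$ factors through $N_{K/\Q}$ rather than to search for a cohomological vanishing condition.
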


This is a special case of Lemma $4.1.4$ in \cite{CHT}. We fix one such $\psi$ throughout this paper.

At last, we introduce certain notation and simple lemmas concerning Hecke characters of the quadratic imaginary field $K$.

\begin{df}
For $\eta$ an algebraic Hecke character of $K$ with infinity type $\eta_{\infty}(z)=z^{a(\eta)}\bar{z}^{b(\eta)}$, we define:

 \begin{itemize}

 \item $\check\eta=\eta^{-1,c}$ a Hecke character of $K$.
 \item $\widetilde{\eta}(z)=\eta(z)/\eta(\bar{z})$ a Hecke character of $K$.
 \item $\eta_{0}$ the Hecke character of $\Q$ such that $\eta \eta ^{c}= (\eta_{0}\circ N_{\AK/\AQ})  ||\cdot||^{a(\eta)+b(\eta)}$.
 \item $\eta^{(2)}=\eta^2/\eta_{0}\circ N_{\AK/\AQ}$.

 \end{itemize}
\end{df}

\begin{rem}
\begin{enumerate}
\item $\eta_{0}$ is a character of finite order and thus a Dirichlet character.
\item The two transform $\check{\text{}}$ and $\text{}^{(2)}$ commute, i.e.  $(\check{\eta})^{(2)}=\check{\eta^{(2)}}$. We simply denote it by $\check{\eta}^{(2)}$.
\item It is easy to verify that $\widetilde{\eta}/\check{\eta}^{(2)}=||\cdot||^{a(\eta)+b(\eta)}$.
\item $\widetilde{\eta}$ is conjugate self-dual, i.e. $\widetilde{\eta}^c=\widetilde{\eta}^{-1}$.
\end{enumerate}
\end{rem}

Let $\alpha$, $\beta$ be Hecke characters of $K$ with  $\alpha_{\infty}(z)=z^{\kappa}$ and $\beta_{\infty}(z)=z^{-k}$, $\kappa$,  $k\in \Z$. For $\varepsilon$ a Dirichlet character of $\Q$ of conductor $f$, we define the Gauss sum of $\varepsilon$ by $\mathcal{G}(\varepsilon):=\sum\limits_{a=1}^{f}\varepsilon(a)e^{2\pi i a/f}$.

Then by $(1.10.9)$ and $(1.10.10)$ on page $100$ of \cite{harris97}, we have
\begin{lem}\label{lemmas}
\begin{equation}\label{dirichlet}
(\mathcal{G}(\alpha_{0}))_{\sigma\in \Sigma_{E(\alpha)}}\sim _{E(\alpha)} (p((\alpha_{0}\circ N_{\AK/\AQ}),1)^{-1})_{\sigma\in \Sigma_{E(\alpha)}};
\end{equation}

\begin{equation}\label{norm}
p(||\cdot||_{\AK}^{\kappa},1)\sim_{\Q} (2\pi i)^{-\kappa}.
\end{equation}

\end{lem}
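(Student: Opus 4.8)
The statement to prove is Lemma \ref{lemmas}, consisting of two period relations: \eqref{dirichlet} relating the Gauss sum of the Dirichlet character $\alpha_0$ to the inverse of a CM period of $\alpha_0 \circ N_{\AK/\AQ}$, and \eqref{norm} relating the CM period of the norm character $\|\cdot\|_{\AK}^\kappa$ to a power of $2\pi i$. The excerpt itself points to the source: formulas $(1.10.9)$ and $(1.10.10)$ on page $100$ of \cite{harris97}. So the honest plan is partly a citation, but I should sketch how one would actually derive these.

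For \eqref{norm}: The plan is to reduce to the case $\kappa = 1$ by multiplicativity of CM periods in the character (formula \eqref{charmulti} of Proposition \ref{propCM}), since $\|\cdot\|_{\AK}^\kappa = (\|\cdot\|_{\AK})^\kappa$ gives $p(\|\cdot\|_{\AK}^\kappa, 1) \sim_\Q p(\|\cdot\|_{\AK},1)^\kappa$. Then for $\kappa = 1$ one identifies the norm character $\|\cdot\|_{\AK}$ with a Tate twist: its associated motive is $\Q(1)$ (or rather, restricted from $\Q$, the cyclotomic character), whose Deligne/CM period is a rational multiple of $2\pi i$. Concretely, $\|\cdot\|_{\AK} = \|\cdot\|_\AQ \circ N_{\AK/\AQ}$, and by the norm-compatibility formula (item (4) of Proposition \ref{propCM}, i.e. $p_F((\eta\circ N)^\sigma, \tau^\sigma) \sim p_{F_0}(\eta^\sigma, \tau|_{F_0}^\sigma)$), we get $p_K(\|\cdot\|_\AQ \circ N_{\AK/\AQ}, \sigma_0) \sim_{\Q} p_\Q(\|\cdot\|_\AQ, \mathrm{triv})$, and the latter CM period over $\Q$ — the period of $\Q(1)$ — is classically $\sim_\Q (2\pi i)^{-1}$ (this is the normalization of Deligne's $\delta$-period / the classical fact that $\int \frac{dz}{z} = 2\pi i$). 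Combining gives the claim.

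For \eqref{dirichlet}: The idea is to invoke Blasius's theorem (Theorem \ref{blasius}) — or rather its classical ancestor for Dirichlet characters, which is the Hurwitz/Lerch formula for $L$-values of even/odd Dirichlet characters — for the character $\eta := \alpha_0 \circ N_{\AK/\AQ}$ of $K$, but more efficiently one works directly over $\Q$ with the Dirichlet $L$-function $L(s,\alpha_0)$. The classical statement is $L(1, \alpha_0) \sim_{E(\alpha_0)} (2\pi i) \mathcal{G}(\alpha_0) / f$ type relations (the precise exponent depending on parity), while Blasius's formula expresses the same $L$-value as a CM period times a power of $2\pi i$ and $D^{1/2}$; matching the two, and using that for $F = \Q$ the discriminant factor is trivial, isolates $\mathcal{G}(\alpha_0) \sim p(\check{\eta}, \Phi)^{\pm 1}$. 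One then has to track: (i) that $\check{\alpha_0} = \alpha_0^{-1}$ for a finite-order character, so the $\check{}$ in \eqref{dirichlet} versus the absence of it is handled by $\alpha_0^{-1} \sim_{E(\alpha_0)} \alpha_0$ up to the $E(\alpha_0)$-equivalence (Galois-conjugate Gauss sums), and (ii) the base change compatibility to pass between $p_\Q(\alpha_0, \cdot)$ and $p_K(\alpha_0 \circ N, 1)$ via item (4) of Proposition \ref{propCM} again. The $G_K$-equivariance is automatic since all the factors involved ($\mathcal{G}(\alpha_0^\sigma)$, $L$-values, CM periods) transform correctly under $\mathrm{Aut}(\C)$ by Lemma \ref{anotherdef}.

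The main obstacle is bookkeeping rather than conceptual: getting the \emph{exact} power of $2\pi i$ and the exact placement of the $\check{}$ and the inverse correct, and making sure the $D_{F^+}^{1/2}$ factor from Blasius's formula genuinely disappears for $F^+ = \Q$ (where $D_\Q = 1$) — so that no stray discriminant of $K$ or conductor contaminates the relation. Since the paper explicitly derives this from $(1.10.9)$--$(1.10.10)$ of \cite{harris97}, the cleanest writeup is to cite those two formulas directly and note that \eqref{norm} additionally uses multiplicativity \eqref{charmulti} to pass from $\kappa = 1$ to general $\kappa$; I would present it that way, with the above derivations as the conceptual justification in case a reader wants to reconstruct them. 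I do not expect to need any non-vanishing input here, since these are formal period identities, not statements about specific $L$-values being nonzero.
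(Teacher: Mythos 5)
Your proposal matches the paper's treatment exactly: the paper itself offers no proof of Lemma \ref{lemmas} beyond the citation to formulas $(1.10.9)$ and $(1.10.10)$ on page~$100$ of \cite{harris97}, which is precisely what you propose to do. Your supplementary sketches — reducing \eqref{norm} to $\kappa=1$ via multiplicativity \eqref{charmulti} and norm compatibility, and matching Blasius's theorem against the classical Dirichlet $L$-value formula for \eqref{dirichlet} — are a reasonable reconstruction of how the cited identities are established, and you correctly flag the remaining bookkeeping (exact exponents, placement of $\check{\ }$ and the inverse) as the only delicate part.
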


We can then deduce easily from the previous lemma and Proposition \ref{propCM} that:

\begin{lem}
\begin{eqnarray}
((2\pi i)^{\kappa} \mathcal{G}(\alpha_{0}))_{\sigma\in \Sigma_{E(\alpha)}}
& \sim_{E(\alpha)}&( [p(||\cdot||^{\kappa},1)p(\alpha_{0}\circ N_{\AK/\AQ},1)]^{-1})_{\sigma\in \Sigma_{E(\alpha)}}\nonumber\\
&\sim_{E(\alpha)} &(p(\alpha\alpha^{c},1)^{-1})_{\sigma\in \Sigma_{E(\alpha)}}.
\end{eqnarray}

\begin{eqnarray}
((2\pi i)^{k}p(\check{\beta}^{(2)},1))_{\sigma\in \Sigma_{E(\beta)}}&\sim_{E(\beta)}& (p(||\cdot||^{-k},1)p(\check{\beta}^{(2)},1))_{\sigma\in \Sigma_{E(\beta)}}\nonumber\\
&\sim_{E(\beta)} &(p(\widetilde{\beta},1))_{\sigma\in \Sigma_{E(\beta)}}.
\end{eqnarray}

\end{lem}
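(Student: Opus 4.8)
The plan is to read off both displayed relations directly from the inputs already collected in this subsection: Lemma~\ref{lemmas}, the multiplicativity \eqref{charmulti} of CM periods, and the elementary identities among $\widetilde{\eta}$, $\check{\eta}^{(2)}$, $\eta_{0}$ and the infinity type recorded in the remark following the definition of $\check{\eta}$. No new ingredient is needed; the work is purely one of assembling these.

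For the first equation I would first invoke \eqref{norm} with exponent $\kappa$, which gives $(2\pi i)^{\kappa}\sim_{\Q}p(||\cdot||_{\AK}^{\kappa},1)^{-1}$, together with \eqref{dirichlet} for $\alpha$, which gives $(\mathcal{G}(\alpha_{0}))_{\sigma\in\Sigma_{E(\alpha)}}\sim_{E(\alpha)}(p(\alpha_{0}\circ N_{\AK/\AQ},1)^{-1})_{\sigma\in\Sigma_{E(\alpha)}}$; multiplying these two relations (the identification $E(\alpha)\otimes\C\cong\C^{\Sigma_{E(\alpha)}}$ being a ring isomorphism) yields the first line. For the second line, observe that by the definition of $\alpha_{0}$, together with $a(\alpha)+b(\alpha)=\kappa$, one has the equality of Hecke characters $\alpha\alpha^{c}=(\alpha_{0}\circ N_{\AK/\AQ})\,||\cdot||_{\AK}^{\kappa}$; applying \eqref{charmulti} with $\chi_{1}=\alpha_{0}\circ N_{\AK/\AQ}$, $\chi_{2}=||\cdot||_{\AK}^{\kappa}$ and $\Psi=\{1\}$ then gives $p(\alpha\alpha^{c},1)\sim_{E(\alpha)}p(\alpha_{0}\circ N_{\AK/\AQ},1)\,p(||\cdot||_{\AK}^{\kappa},1)$, and inverting yields the claim. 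For the second equation I would apply \eqref{norm} with exponent $-k$, which reads $p(||\cdot||_{\AK}^{-k},1)\sim_{\Q}(2\pi i)^{k}$, and multiply through by $p(\check{\beta}^{(2)},1)$ to obtain the first line; for the second line, the identity $\widetilde{\eta}/\check{\eta}^{(2)}=||\cdot||_{\AK}^{a(\eta)+b(\eta)}$ applied to $\beta$ (with $a(\beta)+b(\beta)=-k$) gives $\widetilde{\beta}=||\cdot||_{\AK}^{-k}\,\check{\beta}^{(2)}$, whence \eqref{charmulti} with $\chi_{1}=||\cdot||_{\AK}^{-k}$, $\chi_{2}=\check{\beta}^{(2)}$, $\Psi=\{1\}$ gives $p(\widetilde{\beta},1)\sim_{E(\beta)}p(||\cdot||_{\AK}^{-k},1)\,p(\check{\beta}^{(2)},1)$, as desired.

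There is no genuine obstacle here; the only two points requiring care are bookkeeping. First, $\sim$ is not transitive (Remark~\ref{transitive}), so one must note that every intermediate quantity appearing above --- powers of $2\pi i$, the Gauss sum $\mathcal{G}(\alpha_{0})$, and all CM periods --- is nonzero, hence lies in the relevant $(E\otimes\C)^{\times}$, which legitimizes chaining the relations. Second, one must track the fields of definition: since $\alpha_{0}$ and $\beta_{0}$ are of finite order and $||\cdot||_{\AK}$ is defined over $\Q$, the characters $\alpha_{0}\circ N_{\AK/\AQ}$, $||\cdot||_{\AK}^{\kappa}$ (resp.\ $||\cdot||_{\AK}^{-k}$, $\check{\beta}^{(2)}$, $\widetilde{\beta}$) all have definition field contained in $E(\alpha)$ (resp.\ $E(\beta)$), so the relations coming from \eqref{charmulti} may be read over $E(\alpha)$ (resp.\ $E(\beta)$); and since each step is equivariant under $G_{K}$, so is the final statement.
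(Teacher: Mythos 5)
Your proposal is correct and follows exactly the route the paper intends: the first line of each display is Lemma~\ref{lemmas} (equations \eqref{norm} and \eqref{dirichlet}), and the second line is the multiplicativity \eqref{charmulti} applied to the identities $\alpha\alpha^{c}=(\alpha_{0}\circ N_{\AK/\AQ})\,||\cdot||^{\kappa}$ and $\widetilde{\beta}=||\cdot||^{-k}\,\check{\beta}^{(2)}$, with the nonvanishing of CM periods, Gauss sums and powers of $2\pi i$ justifying the chaining despite the non-transitivity of $\sim$. Nothing to add.
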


\end{subsection}
\end{section}

\begin{section}{Base change}
\begin{subsection}{General base change}

Let $G$ and $G'$ be two connected quasi-split reductive algebraic groups over $\Q$. Put $\widehat{G}$ the complex dual group of $G$. The Galois group $G_{\Q}=Gal(\overline{\Q}/\Q)$ acts on $\widehat{G}$. We define the \textbf{$L$-group} of $G$ by $\text{}^{L}G:=\widehat{G} \rtimes G_{\Q}$ and we define $\text{}^{L}G'$ similarly. A group homomorphism between two $L$-groups $\text{}^{L}G\rightarrow \text{}^{L}G'$ is called an \textbf{$L$-morphism} if it is continuous, its restriction to $\widehat{G}$ is analytic and it is compatible with the projections of $\text{}^{L}G$ and $\text{}^{L}G'$ to $G_{\Q}$. If there exists an $L$-morphism $\text{}^{L}G\rightarrow \text{}^{L}G'$, the \textbf{Langlands' principal of functoriality} predicts a correspondence from automorphic representations of $G(\AQ)$ to automorphic representations of $G'(\AQ)$ (c.f. section $26$ of \cite{arthurtraceformula}). More precisely, we wish to associate an $L$-packet of automorphic representations of  $G(\AQ)$ to that of $G'(\AQ)$.

Locally, we can specify this correspondence for unramified representations. Let $v$ be a finite place of $\Q$  such that $G$ is unramified at $v$. We fix $\Gamma_{v}$ a maximal compact hyperspecial subgroup of $G_{v}:=G(\Q_{v})$.  By definition, for $\pi_{v}$ an admissible representation of $G_{v}$, we say $\pi_{v}$ is \textbf{unramified} (with respect to $\Gamma_{v}$) if it is irreducible and $dim \pi_{v}^{\Gamma_{v}} > 0$. One can show that $\pi_{v}^{\Gamma_{v}}$ is actually one dimensional since $\pi_{v}$ is irreducible.

Denote $H_{v}:=\mathcal{H}(G_{v},\Gamma_{v})$ the Hecke algebra consisting of compactly supported continuous functions from $G_{v}$ to $\C$ which are $\Gamma_{v}$ invariants at both side. We know $H_{v}$ acts on $\pi_{v}$ and preserves $\pi_{v}^{\Gamma_{v}}$ (c.f. \cite{minguez}). Since $\pi_{v}^{\Gamma_{v}}$ is one-dimensional, every element in $H_{v}$ acts as a multiplication by a scalar on it. In other words, $\pi_{v}$ thus determines a character of $H_{v}$. This gives a map from the set of unramified representations of $G_{v}$ to the set of characters of $H_{v}$ which is in fact a bijection (c.f. section $2.6$ of \cite{minguez}).

We can moreover describe the structure of $H_{v}$ in a simpler way. Let $T_{v}$ be a maximal torus of $G_{v}$ contained in a Borel subgroup of $G_{v}$. We denote $X_{*}(T_{v})$ the set of cocharacters of $T_{v}$ defined over $\Q_{v}$. The Satake transform identifies the Hecke algebra $H_{v}$ with the polynomial ring $\C[X_{*}(T_{v})]^{W_{v}}$ where $W_{v}$ is the Weyl group of $G_{v}$ (c.f. section $1.2.4$ of \cite{harristakagi}).

Let $G'$ be a connected quasi-split reductive group which is also unramified at $v$. We can define $\Gamma'_{v}$, $H'_{v}:=\mathcal{H}(G'_{v},\Gamma'_{v})$ and $T'_{v}$ similarly. An $L$-morphism $\text{}^{L}G\rightarrow \text{}^{L}G'$ induces a morphism $\widehat{T_{v}}\rightarrow \widehat{T'_{v}}$ and hence a map $T'_{v}\rightarrow T_{v}$. The latter gives a morphism from $\C[X_{*}(T'_{v})]^{W'_{v}}$ to $\C[X_{*}(T_{v})]^{W_{v}}$ and thus a morphism from $H'_{v}$ to $H_{v}$. Its dual hence gives a map from the set of unramified representations of $G_{v}$ to that of $G'_{v}$. This is the local Langlands's principal of functoriality for unramified representations.

\bigskip

In this article, we are interested in a particular case of the Langlands' functoriality: the cyclic base change. Let $K/\Q$ be a cyclic extension, for example $K$ is a quadratic imaginary field. Let $G$ be a connected quasi-split reductive group over $\Q$. Put $G'=Res_{K/\Q}G_{K}$. We know $\widehat{G'}$ equals to $\widehat{G}^{[K:\Q]}$. The diagonal embedding is then a natural $L$-morphism $\text{}^{L}G \rightarrow \text{}^{L}G'$. The corresponding functoriality is called the base change.

More precisely, let $v$ be a place of $\Q$. The above $L$-morphism gives a map from the set of unramified representations of $G(\Q_{v})$ to that of $G'(\Q_{v})$ where the latter is isomorphic to $G(K_{v})\cong \bigotimes\limits_{w|v} G(K_{w})$. By the tensor product theorem, all the unramified representation of $G(K_{v})$ can be written uniquely as tensor product of unramified representations of $G(K_{w})$ where $w$ runs over the places of $K$ above $v$. We fix $w$ a place of $K$ above $v$ and we then get a map from the set of unramified representation of $G(\Q_{v})$ to that of $G(K_{w})$. For an unramified representation of $G(\Q_{v})$, we call the image its \textbf{base change} with respect to $K_{w}/\Q_{v}$.

Let $\pi$ be an admissible irreducible representation of $G(\AQ)$. We say $\Pi$, a representation of $G(\AK)$, is a \textbf{(weak) base change} of $\pi$ if for all $v$, a finite place of $\Q$, such that $\pi$ is unramified at $v$ and all $w$, a place of $K$ over $v$, $\Pi_{w}$ is the base change of $\pi_{v}$. In this case, we say $\Pi$ \textbf{descends to $\pi$} by base change. Moreover, $\Pi$ and $\pi$ have the same partial $L$-function.

\begin{rem}
The group $Aut(K)$ acts on $G(\AK)$. This induces an action of $Aut(K)$ on automorphic representations of $G(\AK)$, denoted by $\Pi^{\sigma}$ for $\sigma\in Aut(K)$ and $\Pi$ an automorphic representation of $G(\AK)$. It is easy to see that if $\Pi$ is a base change of $\pi$, then $\Pi^{\sigma}$ is one as well. So if we have the strong multiplicity one theorem for $G(\AK)$, we can conclude that every representation in the image of base change is $Aut(K)$-stable.

\end{rem}

\begin{ex}\textbf{Base change for $GL_{1}$}

Now let us give an example of base change. Let $L/K$ be a cyclic extension of numbers fields. Let $\sigma$ be a generator of $Gal(L/K)$. The automorphic representations of $GL_{1}$ over a number field are nothing but Hecke characters. Let $\eta$ be a Hecke character of $K$. The base change of $\eta$ to $GL_{1}(\AL)$ in this case is just $\eta\circ N_{\AL/\AK}$.

On the other hand, as discussed above, if $\chi$ is a Hecke character of $\AL$, then a necessary condition for it to descend is $\Pi =\Pi^{\sigma}$ for all $\sigma\in Gal(L/K)$. We shall see that this is also sufficient.
\begin{thm}\label{bccharacter}
Let  $L/K$ be a cyclic extension of number fields and $\chi$ be a Hecke character of $L$. If $\chi=\chi^{\sigma}$ for all $\sigma\in Gal(L/K)$, then there exists $\eta$, a Hecke character of $K$, such that $\chi=\eta\circ N_{\AL/\AK}$.
Moreover, if $\chi$ is unramified at some place $v$ of $L$, we can choose $\eta$ to be unramified at places of $L$ over $v$.
\end{thm}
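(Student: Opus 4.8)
The plan is to reduce the statement to the classical theory of base change for $GL_1$, i.e.\ to Hecke's theory of characters of cyclic extensions, and to treat the local integrality/unramifiedness claim separately. First I would recall that, by global class field theory, a Hecke character of $L$ is the same as a continuous character of the id\`ele class group $C_L = L^\times\backslash\AL^\times$, and the norm map $N_{\AL/\AK}$ induces the transfer/norm map $C_L \to C_K$ whose cokernel is, by the fundamental equality of class field theory, canonically $Gal(L/K)$ (this is where cyclicity is used: $C_K/N_{L/K}C_L \cong Gal(L/K)$, of order $[L:K]$). Thus the characters of $C_K$ that factor through the norm are exactly the characters of $C_K$ that restrict trivially to $N_{L/K}C_L$, and precomposition with the norm identifies $\widehat{C_K/N_{L/K}C_L}$ with a subgroup of $\widehat{C_L}$. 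The content of the first assertion is therefore: a character $\chi$ of $C_L$ that is $Gal(L/K)$-invariant lies in the image of this map, i.e.\ is trivial on the augmentation-type subgroup generated by $\{x/x^\sigma : x\in C_L,\ \sigma\in Gal(L/K)\}$, and the quotient of $C_L$ by that subgroup maps onto (in fact, after the norm, into) $C_K$ compatibly.

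The key steps, in order, would be: (1) translate everything into characters of id\`ele class groups as above; (2) observe that $\chi = \chi^\sigma$ for all $\sigma$ means $\chi$ is trivial on the subgroup $I_\sigma C_L$ generated by $\{x \cdot (x^\sigma)^{-1}\}$, hence factors through $C_L/I_\sigma C_L = C_L/I C_L$ where $I$ is the augmentation ideal of $\Z[Gal(L/K)]$; (3) invoke the norm theorem of class field theory for the cyclic extension $L/K$, which gives an exact sequence identifying $C_L/IC_L$ (equivalently $\widehat{H}^0$ of the Tate cohomology, or just the coinvariants) with $C_K$ via the norm map up to the finite piece $Gal(L/K)$ — more precisely $N_{L/K}: C_L/IC_L \hookrightarrow C_K$ with image $N_{L/K}C_L$ of index $[L:K]$; (4) conclude that $\chi$, viewed as a character of $C_L/IC_L$, can be pushed through $N_{L/K}$ to a character $\eta_0$ of $N_{L/K}C_L$, and then extend $\eta_0$ to a character $\eta$ of all of $C_K$ (possible because the inclusion of an abelian group into another splits at the level of character groups, i.e.\ characters of subgroups of abelian groups extend — using divisibility of $\C^\times$). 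This $\eta$ satisfies $\chi = \eta\circ N_{\AL/\AK}$, as the two continuous characters of $C_L$ agree on $N_{L/K}C_L$... and more to the point, agree after composing, by construction, on all of $C_L$ since $\chi$ is determined on $C_L/IC_L$ which injects into $C_K$.

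For the refinement about ramification, I would argue locally: if $\chi$ is unramified at a place $v$ of $L$ lying over a place $u$ of $K$, the extension step (4) is the only place where choices enter, and I would choose the extension $\eta$ to be unramified at $u$. Concretely, the local component $\eta_u$ is forced on $N_{L_v/K_u}(L_v^\times)\cdot$ (the relevant part of the local norm image) by $\chi_v$, which is trivial on $\OO_{L_v}^\times$; since $\chi_v$ is unramified, the induced local character of $K_u^\times$ extending it can be taken trivial on $\OO_{K_u}^\times$ — one extends a character of the subgroup $N_{L_v/K_u}(L_v^\times)$ of $K_u^\times$ that is trivial on $N_{L_v/K_u}(\OO_{L_v}^\times) \subseteq \OO_{K_u}^\times$ to a character of $K_u^\times$ trivial on all of $\OO_{K_u}^\times$, again by divisibility of $\C^\times$ and the fact that $K_u^\times/\OO_{K_u}^\times \cong \Z$ is free. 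Patching this local choice into the global extension in step (4) (which only constrains $\eta$ on the global norm subgroup, leaving the behaviour at $u$ free apart from the constraint already imposed by $\chi_v$) gives an $\eta$ unramified at $u$.

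The main obstacle I anticipate is not any single deep input — it is the bookkeeping in step (4): making the extension of $\eta_0$ from the norm subgroup $N_{L/K}C_L$ to $C_K$ \emph{simultaneously} compatible with a prescribed unramified local behaviour at $v$, while keeping continuity (automatic for characters of locally compact abelian groups once defined on a finite-index closed subgroup) and algebraicity of the resulting Hecke character. One must check that the infinity type and the field of values behave well — i.e.\ that $\eta$ inherits algebraicity from $\chi$ — which follows because the infinity type of $\chi$ is already $Gal(L/K)$-invariant and hence descends through the norm on the archimedean components, but this should be stated carefully. I expect the cleanest write-up to cite the cyclic norm theorem (Tate's ``$\widehat H^0$'' computation, or equivalently Hasse's norm theorem combined with the index computation) as a black box and spend the bulk of the argument on the extension-with-prescribed-local-behaviour step.
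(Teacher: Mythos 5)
Your argument follows the same route as the paper's proof. The paper defines $\eta_0$ on $N_{\AL/\AK}(\AL^\times)$ by $\eta_0(N_{\AL/\AK}(z))=\chi(z)$, checks well-definedness via Hilbert~90 for the cyclic extension, descends to an open finite-index subgroup of $K^\times\backslash\AK^\times$ using the Hasse norm theorem, and extends; you phrase the identical computation at the level of idele class groups, packaging Hilbert~90 and Hasse together as the vanishing of $\widehat H^{-1}(Gal(L/K),C_L)$ (equivalently, injectivity of $N\colon C_L/IC_L\to C_K$) before extending by divisibility of $\C^\times$.

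One point in your treatment of the unramified refinement deserves tightening, and it is a point the paper's one-line ``as we want'' also elides. The claim that step (4) leaves ``the behaviour at $u$ free apart from the constraint already imposed by $\chi_v$'' is not quite right: the requirement $\eta|_{K^\times N_{\AL/\AK}(\AL^\times)/K^\times}=\eta_0$ constrains $\eta$ on every class in that subgroup whose $u$-component is a unit, not only on $N_{L_v/K_u}(\OO_{L_v}^\times)$. To make $\eta_u$ trivial on $\OO_{K_u}^\times$ one must first verify the compatibility: if $N_{\AL/\AK}(z)\in K^\times\OO_{K_u}^\times$ then $\chi(z)=1$. This is not a purely local statement. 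Writing $N_{\AL/\AK}(z)=\alpha k$ with $\alpha\in K^\times$ and $k\in\OO_{K_u}^\times$, one sees $\alpha$ is a local norm at every place $\neq u$; by the reciprocity law (product formula for the local Artin symbols) it is then a local norm at $u$ as well, hence by Hasse a global norm $\alpha=N_{L/K}(\ell)$. Replacing $z$ by $z\ell^{-1}$ reduces to $N_{\AL/\AK}(z)\in\OO_{K_u}^\times$, after which $\chi(z)=1$ follows from the $Gal(L/K)$-invariance of $\chi$, the vanishing of $\widehat H^{-1}$ at the places away from $u$, and the unramifiedness of $\chi$ at the places above $u$. Only once this is checked can one define $\eta$ to be trivial on the image of $\OO_{K_u}^\times$ and extend further to all of $C_K$.
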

\begin{proof}
We define at first $\eta_{0}:N_{\AL/\AK}(\AL)^{\times}\rightarrow \C$ as follows:
for $w\in N_{\AL/\AK}(\AL)^{\times} $, take $z\in \AL^{\times}$ such that $w=N_{\AL/\AK}(z)$ and we define $\eta_{0}(w)=\chi(z)$. This does not depend on the choice of $z$. In fact, if $z'\in\AL^{\times}$ such that $N_{\AL/\AK}(z')$ also equals $w=N_{\AL/\AK}(z)$, then by Hilbert's theorem $90$ which says $H^{1}(Gal(L/K), \AL^{\times})=1$, there exists $t\in \AL^{\times}$ such that $z'=\cfrac{\sigma(t)}{t}z$ for some $\sigma\in Gal(L/K)$. Hence $\chi(z')=\cfrac{\chi^{\sigma}(t)}{\chi(t)}\chi(z)=\chi(z)$. Therefore $\eta_{0}(w)$ is well defined.
One can verify that $\eta_{0}$ is a continuous character.

By Hasse norm theorem, $N_{\AL/\AK}(\AL)^{\times}\cap K^{\times} =N_{\AL/\AK}(L)^{\times}$ (this is a direct corollary of Hilbert's theorem $90$ on $K^{\times}\backslash\AK^{\times}$ that $H^{1}(Gal(L/K), K^{\times}\backslash\AK^{\times})=\{1\}$), we know $\eta_{0}$ is trivial on $N_{\AL/\AK}(\AL)^{\times}\cap K^{\times}$, and hence factors through $(K^{\times}\cap N_{\AL/\AK}(\AL)^{\times})\backslash\N_{\AL/\AK}(\AL)^{\times}$. The latter is a finite index open subgroup of $K^{\times}\backslash \AK^{\times}$ by the class field theory. We can thus extend $\eta_{0}$ to a Hecke character of $\eta$ as we want.
\end{proof}

\end{ex}

\end{subsection}

\begin{subsection}{Base change for unitary groups}

In the following sections, $K$ is always a quadratic imaginary field.

Let $r,s\in \N$ such that $r+s=n$. Fix $q=vv^{c}$ a place of $\Q$ which splits in $K$ such that $v$, $v^{c}$ are inert in $F$.

Take $D_{r,s}$ to be a division algebra of dimension $n^{2}$ with center $K$ endowed with $*: D_{r,s}\rightarrow D_{r,s}$ an involution of second kind. Moreover, we want $D_{r,s}$ to be unramified at all finite places which does not divide $q$ and ramified at places over $q$. We want also the unitary group associated to $(D_{r,s},*)$ has infinity sign $(r,s)$. The calculation of local invariants of unitary groups in chapter $2$ of \cite{clozelIHES} shows that such a division algebra exists.

We denote $G=U(r,s)$ the restriction from $K$ to $\Q$ of the unitary group associate to $(D_{r,s},*)$. Let $\pi$ be a cuspidal automorphic representation of $U(r,s)(\AQ)$.

\begin{rem}
The Unitary group $G$ is not quasi-split in general. Globally we do not know how to define the base change map. But $G$ is quasi-split at almost every finite places. Therefore we can still define the weak base change map.
\end{rem}
Note that $G(\AK)\cong GL_{n}(\AK)$. We have the strong multiplicity one theorem for $G(\AK)$. Therefore, if $\pi$ admits a base change then it is unique. Moreover, as discussed in the previous section, its base change is invariant under the action of $Aut(K)$ if it exists. If we identify $G(\AK)$ with $GL_{n}(\AK)$, the action of the non trivial element in $Aut(K)$ acts on the latter by sending $g$ to $\overline{g}^{-1}$. Therefore, a representation of $GL_{n}(\AK)$ is $Aut(K)$-stable if and only if it is conjugate self-dual.

For the existence of base change under conditions, we refer to \cite{harrislabesse} Theorem $2.2.2$ and Theorem $3.1.3$.

\begin{df}\label{cohomological}
For $G$ a reductive group over $\Q$, we denote by $\mathfrak{g}_{\infty}$ the Lie group of $G(\R)$ and $K_{\infty}$ a maximal compact subgroup of $G(\R)$. We say $\pi$, a representation of $G(\AQ)$ is \textbf{cohomological} if there exists $W$, an algebraic finite-dimensional representation of $G(\R)$ such that $H^{*}(\mathfrak{g}_{\infty},K_{\infty}; \pi\otimes W)\neq 0$. In this case, we say $\pi$ is cohomological for $W$.
\end{df}

We restate the existence of base change in the cohomological case here( c.f. Theorem $3.1.3$ of \cite{harrislabesse}):

\begin{thm}\label{bcunitaryexistence}
Let $\pi$ be a cuspidal automorphic representation of $U(r,s)(\AQ)$. Assume $\pi$ is cohomological. If $(\pi_q)$ is supercuspidal, then the weak base change of $\pi$ exists and is unique. Moreover, it is conjugate self-dual as a representation of $GL_{n}(\AK)$ and cohomological.
\end{thm}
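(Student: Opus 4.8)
The plan is to deduce this from Theorem~$3.1.3$ of \cite{harrislabesse}, which provides the existence of a weak base change for cuspidal cohomological representations of unitary groups under a local supercuspidality hypothesis at the split place $q$. Concretely, I would first recall the general setup: $G = U(r,s)$ is quasi-split at all finite places away from $q$, so for every finite $v \neq q$ at which $\pi$ is unramified the local base change of $\pi_v$ is defined via the Satake isomorphism and the diagonal $L$-morphism $\text{}^{L}G \to \text{}^{L}(\mathrm{Res}_{K/\Q}G_K)$ as in the previous subsection. The content of \cite{harrislabesse} is then that there exists an automorphic representation $\Pi$ of $GL_n(\AK)$ whose local components at such $v$ match these prescribed base changes; this is exactly what it means for $\Pi$ to be a weak base change of $\pi$.

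For \textbf{uniqueness}, I would invoke the strong multiplicity one theorem for $GL_n(\AK)$: since $G(\AK) \cong GL_n(\AK)$ and any weak base change is, by definition, determined at almost all places, two weak base changes of $\pi$ agree at almost all places and hence are isomorphic. For \textbf{conjugate self-duality}, the argument is the one already sketched in the Remark preceding Definition~\ref{cohomological} and in the discussion of the action of $Aut(K)$: the nontrivial element of $Aut(K)$ acts on $GL_n(\AK) \cong G(\AK)$ by $g \mapsto \overline{g}^{-1}$, and the local base change maps are manifestly $Aut(K)$-equivariant (they factor through the Satake parameters, which are permuted compatibly), so $\Pi$ and $\Pi^{c,\vee}$ are both weak base changes of $\pi$; by the uniqueness just established, $\Pi \cong \Pi^{c,\vee}$, i.e.\ $\Pi$ is conjugate self-dual. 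Finally, \textbf{cohomological}: since $\pi$ is cohomological for some algebraic representation $W$ of $G(\R)$, the archimedean component $\pi_\infty$ has the infinitesimal character and $(\mathfrak{g}_\infty, K_\infty)$-cohomology of a cohomological representation; the archimedean base change of $\pi_\infty$ is then the corresponding cohomological representation of $GL_n(K\otimes\R) = GL_n(\C)$, so $\Pi_\infty$ is cohomological, and this is part of what is recorded in Theorem~$3.1.3$ of \cite{harrislabesse}.

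Since all four assertions are either quoted verbatim from \cite{harrislabesse} or are short formal consequences of strong multiplicity one together with the $Aut(K)$-equivariance of the unramified base change correspondence, there is in fact no genuine obstacle here: the statement is essentially a restatement, packaged for later use, of the cited theorem. If I were to identify a ``hard part,'' it would be purely expository — namely making sure the supercuspidality hypothesis on $\pi_q$ is phrased so as to match exactly the hypotheses of Theorem~$3.1.3$ of \cite{harrislabesse} (the global descent and stabilization of the twisted trace formula underlying that theorem is where all the real work lies, but it is entirely black-boxed here), and checking that the place $q$ splitting in $K$ with $v, v^c$ inert in $F$ is compatible with the division-algebra construction of $D_{r,s}$ so that the local representation theory at $q$ is that of an inner form for which ``supercuspidal'' makes sense. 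Beyond that bookkeeping, the proof is immediate.
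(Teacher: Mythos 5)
Your proposal matches the paper's own treatment: the paper simply presents this statement as a restatement of Theorem~3.1.3 of \cite{harrislabesse}, and the only content it adds is the remark noting that the cited theorem is phrased in terms of the Jacquet--Langlands transfer $JL(\pi_q)$ being supercuspidal, which is reconciled with the present hypothesis using the behaviour of $U(r,s)$ at the split place $q$. Your supplementary spelling-out of uniqueness via strong multiplicity one, conjugate self-duality via $Aut(K)$-equivariance of the unramified base change, and cohomologicality via archimedean base change are all consistent with (and subsumed by) the cited theorem, so the proposal is correct and takes essentially the same route as the paper.
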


\begin{rem}
The original theorem in \cite{harrislabesse} requires that the Jacquet-Langlands transfer $JL(\pi_{q})$ to $GL_{n}(\Q_{q})$ is supercuspidal. Recall here $q$ is a split in $K$. We have $U(r,s)(\Q_{q})\cong GL_{n}(\Q_{q})$. 
\end{rem}

Let $\Pi$ be a cuspidal representation of $GL_{n}(\AK)$. We can consider it as a representation of $G(\AK)$. We are more interested in the ''going down'' part here, i.e., to decide when $\Pi$, a cuspidal representation of $G(\AK)\cong GL_{n}(\AK)$, descends to a representation of $G(\AQ)$. A necessary condition is that $\Pi$ is $Aut(K)$-stable, i.e., it is conjugate self-dual in the sense of representation of $GL_{n}(\AK)$. In contrast to the case of $GL_{1}$, this is not sufficient. The non sufficiency can be already seen for $U(1)$:

\begin{ex}\textbf{Base change for $U(1)$}

Let $K$ be a quadratic imaginary field. Let $U$ be the one dimensional torus over $\Q$ defined by $U(\Q)=ker(N: K^{\times}\rightarrow \Q^{\times})$ where $N$ is the norm map. We have $U_{K}\cong GL_{1}|_{K}$.

Let $\eta$ be a continuous character of $U(\Q)\backslash U(\AQ)$. The base change of $\eta$ with respect to $K/\Q$ is the character of $\AK^{\times}$ who sends $z\in \AK$ to $\eta(\cfrac{z}{\overline{z}})$.

On the other hand, let $\chi$ be a Hecke character of $\AK$. We want to know when $\chi$ descends to a character of $U(\Q)\backslash U(\AQ)$, i.e. $\chi(z)=\eta(\cfrac{z}{\overline{z}})$ for some $\eta$. As discussed above, a necessary condition is that $\chi =\chi^{c,-1}$.

Hilbert's theorem $90$ shows that the morphism $K^{\times}\backslash \AK^{\times}\rightarrow U(\Q)\backslash U(\AQ)$ which sends $z$ to $\cfrac{z}{\overline{z}}$ is surjective. We then get an isomorphism
$$\begin{array}{ccc}
K^{\times}\AQ^{\times}\backslash \AK^{\times}& \xrightarrow{\sim}& U(\Q)\backslash U(\AQ)\\
z&\mapsto &\cfrac{z}{\overline{z}}
\end{array}$$
We can then deduce that:
\begin{prop}
A Hecke character $\chi$ of $\AK^{\times}$  descends to a Hecke character of $U(\AQ)$ if and only if it is trivial on $\AQ^{\times}$.
\end{prop}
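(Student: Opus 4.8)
The plan is to prove both directions of the equivalence using the isomorphism
$$K^{\times}\AQ^{\times}\backslash \AK^{\times}\xrightarrow{\sim} U(\Q)\backslash U(\AQ),\qquad z\mapsto \cfrac{z}{\overline{z}}$$
established just above the statement. First I would treat the ``only if'' direction. Suppose $\chi=\eta\circ b$ where $b$ is the base change map sending $z\in\AK^{\times}$ to $\cfrac{z}{\overline z}\in U(\AQ)$ and $\eta$ is a Hecke character of $U(\AQ)$. For $t\in\AQ^{\times}\subset\AK^{\times}$ we have $\overline t=t$, so $\cfrac{t}{\overline t}=1$, hence $\chi(t)=\eta(1)=1$. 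Thus $\chi$ is trivial on $\AQ^{\times}$, as required. (Note that triviality on $\AQ^{\times}$ automatically forces $\chi$ to be conjugate self-dual, since then $\chi(z)\chi(\overline z)=\chi(z\overline z)=\chi(N_{K/\Q}(z))=1$, consistent with the necessary condition $\chi=\chi^{c,-1}$ mentioned before the proposition.)

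For the ``if'' direction, assume $\chi$ is a Hecke character of $\AK^{\times}$ trivial on $\AQ^{\times}$. Then $\chi$ is in particular trivial on $K^{\times}\AQ^{\times}$ (it is already trivial on $K^{\times}$, being a Hecke character), so $\chi$ factors through the quotient $K^{\times}\AQ^{\times}\backslash\AK^{\times}$. Transporting along the displayed isomorphism, $\chi$ defines a continuous character $\eta$ of $U(\Q)\backslash U(\AQ)$ by the rule $\eta(u)=\chi(z)$ for any $z\in\AK^{\times}$ with $u=\cfrac{z}{\overline z}$; this is well-defined precisely because $\chi$ kills the kernel $K^{\times}\AQ^{\times}\backslash\AK^{\times}\cap\{1\}$—more concretely, if $\cfrac{z}{\overline z}=\cfrac{z'}{\overline{z'}}$ then $\cfrac{z}{z'}\in\AK^{\times}$ has trivial image in $U(\AQ)$, hence lies in $\AQ^{\times}$ up to $K^{\times}$, so $\chi(z)=\chi(z')$. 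By construction the base change of this $\eta$ sends $z$ to $\eta(\cfrac{z}{\overline z})=\chi(z)$, i.e. the base change of $\eta$ is $\chi$.

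The argument is essentially formal once the isomorphism $K^{\times}\AQ^{\times}\backslash\AK^{\times}\cong U(\Q)\backslash U(\AQ)$ is in hand, and that isomorphism has already been derived from Hilbert 90 in the preceding lines. So there is no serious obstacle; the only point requiring a moment's care is checking that the induced $\eta$ is well-defined and continuous, which follows from the fact that the map $z\mapsto\cfrac{z}{\overline z}$ is an open continuous surjection with kernel exactly $\AQ^{\times}$ modulo $K^{\times}$. I would write the proof in two short paragraphs mirroring the two implications, invoking the boxed isomorphism directly rather than re-deriving it.
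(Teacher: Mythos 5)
Your proof is correct and follows exactly the route the paper intends: the paper states the proposition as an immediate consequence of the isomorphism $K^{\times}\AQ^{\times}\backslash \AK^{\times}\xrightarrow{\sim} U(\Q)\backslash U(\AQ)$ without further detail. You have simply spelled out the two directions of the resulting bijection between characters, which is the natural way to fill in the omitted verification.
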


 Let $\chi$ be a Hecke character of $K$ such that $\chi=\chi^{c,-1}$. This condition is equivalent to saying that $\chi$ is trivial on $N(\AK^{\times})$. Thus $\chi|_{\AQ^{\times}}$ factors through $\Q^{\times}N(\AK^{\times})\backslash\AQ^{\times}$. By class field theory, the latter is isomorphic to $Gal(K/\Q)\cong \Z/2\Z$. Therefore either $\chi$ itself is trivial on $\Q^{\times}N(\AK^{\times})\backslash\AQ^{\times}$ or $\chi\otimes \varepsilon_{K}$ is trivial on $\Q^{\times}N(\AK^{\times})\backslash\AQ^{\times} $ where $\varepsilon_{K}$ is the Artin character of $\AQ^{\times}$ with respect to $K/\Q$. We take $\widetilde{\varepsilon_{K}}$ to be a lift of ${\varepsilon_{K}}$ to $K^{\times}\backslash \AK^{\times}$. We may assume that $\widetilde{\varepsilon_{K}}$ is conjugate self-dual since $\varepsilon_{K}$ is. We then have:
\begin{cor}
Let $\chi$ be a Hecke character of $K$ such that $\chi=\chi^{c,-1}$. Exactly one of $\{\chi,\chi\otimes \widetilde{\varepsilon_{K}}\}$ is a base change of a Hecke character of $U(\AQ)$.
\end{cor}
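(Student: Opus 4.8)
The plan is to combine the preceding proposition (a Hecke character $\chi$ of $\AK^\times$ with $\chi = \chi^{c,-1}$ descends to $U(\AQ)$ if and only if $\chi$ is trivial on $\AQ^\times$) with the class field theory computation already sketched in the paragraph before the statement. First I would record that the hypothesis $\chi = \chi^{c,-1}$ forces $\chi$ to be trivial on the norm subgroup $N(\AK^\times)$, hence $\chi|_{\AQ^\times}$ factors through the quotient $\Q^\times N(\AK^\times)\backslash \AQ^\times$, which by global class field theory is $Gal(K/\Q) \cong \Z/2\Z$. So $\chi|_{\AQ^\times}$ is either the trivial character of this group of order two, or the unique nontrivial one, which is exactly $\varepsilon_K$ viewed through this quotient.

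Next I would handle the two cases. If $\chi|_{\AQ^\times}$ is trivial, then $\chi$ descends by the Proposition, while $\chi\otimes\widetilde{\varepsilon_K}$ restricted to $\AQ^\times$ equals $\varepsilon_K$ (since $\widetilde{\varepsilon_K}$ lifts $\varepsilon_K$), which is nontrivial, so $\chi\otimes\widetilde{\varepsilon_K}$ does not descend; and one checks $\chi\otimes\widetilde{\varepsilon_K}$ still satisfies $(\chi\otimes\widetilde{\varepsilon_K})^{c,-1} = \chi^{c,-1}\otimes\widetilde{\varepsilon_K}^{c,-1} = \chi\otimes\widetilde{\varepsilon_K}$ because $\widetilde{\varepsilon_K}$ was chosen conjugate self-dual. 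Symmetrically, if $\chi|_{\AQ^\times} = \varepsilon_K$ (through the quotient), then $\chi$ does not descend but $\chi\otimes\widetilde{\varepsilon_K}$ has trivial restriction to $\AQ^\times$ and hence does descend. In either case exactly one of the two characters descends, which is the assertion.

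The only genuinely delicate point is the bookkeeping around $\widetilde{\varepsilon_K}$: one must verify that multiplying by the fixed conjugate-self-dual lift $\widetilde{\varepsilon_K}$ preserves the property $\chi = \chi^{c,-1}$ (immediate from conjugate self-duality of $\widetilde{\varepsilon_K}$) and that the restriction $\widetilde{\varepsilon_K}|_{\AQ^\times}$ is genuinely $\varepsilon_K$ and not something that dies in the quotient $\Q^\times N(\AK^\times)\backslash\AQ^\times$ — but this is exactly the defining property of a lift of $\varepsilon_K$, since $\varepsilon_K$ itself is the nontrivial character of $Gal(K/\Q)$. Everything else is a direct application of the Proposition and the class field theory identification $\Q^\times N(\AK^\times)\backslash\AQ^\times \cong \Z/2\Z$, so no serious obstacle remains; the corollary is essentially a repackaging of the discussion immediately preceding it.
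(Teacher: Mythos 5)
Your proposal is correct and follows exactly the route the paper takes: the paper does not give a separate proof of the corollary but simply states it after the discussion identifying $\Q^{\times}N(\AK^{\times})\backslash\AQ^{\times}$ with $Gal(K/\Q)\cong \Z/2\Z$, choosing the conjugate self-dual lift $\widetilde{\varepsilon_{K}}$, and invoking the preceding proposition. Your write-up merely makes explicit the two-case check (whether $\chi|_{\AQ^{\times}}$ is trivial or equals $\varepsilon_{K}$) and records that $\widetilde{\varepsilon_{K}}|_{\AQ^{\times}}=\varepsilon_{K}$ flips that dichotomy, which is precisely what the paper leaves implicit.
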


\begin{rem}
 Roughly speaking, half of the conjugate self-dual Hecke characters of $\AK$ descend to $U(\AQ)$. We have a similar result for general unitary groups, see Theorem $2.4.1$ of \cite{harrislabesse}. This corresponds to the fact that the unitary group occurs twice as the twisted endoscopic group for $GL_{n}$.
\end{rem}

Now we have a simple criteria to check whether $\chi$, a conjugate self-dual character of $\AK^{\times}$, descends to $U(\AQ)$ or not. Let $t=(t_{v})_{v\leq \infty}\in \AQ$ with $t_{v}=1$ for all $v< \infty$ and $t_{\infty}=-1$. Note that $t$ generates $\Q^{\times}N(\AK^{\times})\backslash\AQ^{\times}$. Therefore $\chi$ descends if and only if $\chi(t)=1$. In particular, if $\chi$ is algebraic, then the infinity type of $\chi$ is of the form $(\cfrac{z}{\overline{z}})^{a}$ with $a\in \Z$. Hence $\chi(t)=1$ and $\chi$ descends.
\begin{cor}
If $\chi$ is an algebraic conjugate self-dual character of $\AK^{\times}$, then $\chi$ descends to a character of $U(\AQ)$.
\end{cor}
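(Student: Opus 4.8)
The plan is to feed the hypothesis straight into the descent criterion established immediately before the statement: a conjugate self-dual Hecke character $\chi$ of $\AK^{\times}$ descends to a character of $U(\AQ)$ if and only if $\chi(t)=1$, where $t=(t_{v})_{v\leq\infty}$ is the idele with $t_{v}=1$ for every finite $v$ and $t_{\infty}=-1$. Thus the entire argument reduces to evaluating $\chi$ at this one idele, and the first step is simply $\chi(t)=\prod_{v}\chi_{v}(t_{v})=\chi_{\infty}(-1)$, since each finite component contributes $\chi_{v}(1)=1$.

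Next I would pin down the archimedean component. Because $\chi$ is algebraic, $\chi_{\infty}(z)=z^{a}\bar z^{b}$ for some integers $a,b$; and conjugate self-duality $\chi^{c}=\chi^{-1}$ forces, upon comparing infinity types, $z^{b}\bar z^{a}=z^{-a}\bar z^{-b}$, hence $b=-a$, so $\chi_{\infty}(z)=(z/\bar z)^{a}$. Evaluating at $z=-1$ gives $\chi_{\infty}(-1)=\bigl((-1)/(-1)\bigr)^{a}=1$, so $\chi(t)=1$ and the criterion delivers the descent. (Equivalently, one may note that conjugate self-duality forces the weight $w(\chi)=-(a+b)$ to vanish, so $\chi_{\infty}(-1)=(-1)^{a}(-1)^{b}=(-1)^{a+b}=1$ directly, bypassing the explicit form of $\chi_{\infty}$.)

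There is no real obstacle here: the corollary is essentially a restatement, in the algebraic setting, of the general descent criterion, and the only point deserving a line of justification is the passage from ``algebraic and conjugate self-dual'' to the precise shape $\chi_{\infty}(z)=(z/\bar z)^{a}$ — which is exactly the infinity-type computation above. I would therefore keep the write-up to a couple of sentences, citing the criterion and the infinity-type normalization.
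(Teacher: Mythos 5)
Your proof is correct and follows essentially the same route as the paper: both apply the criterion $\chi(t)=1$ (with $t$ the idele that is $-1$ at infinity and $1$ elsewhere) and then observe that algebraicity plus conjugate self-duality forces $\chi_{\infty}(z)=(z/\bar z)^{a}$, whence $\chi(t)=\chi_{\infty}(-1)=1$. The only difference is that you spell out the infinity-type computation that the paper leaves implicit; this is a welcome clarification but not a different argument.
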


\end{ex}
\bigskip

For the general cases, as for $U(1)$, if $\Pi$ is in addition cohomological, we have the following result for ''going down'':

\begin{thm}\label{bcunitary}
Let $\Pi$ be a cuspidal cohomological  conjugate self-dual representation of $GL_{n}(\AK)$. If $\Pi$ is supercuspidal at the places over $q$, then $\Pi$ is the weak base change of a cuspidal representation $\pi$ of $U(r,s)(\AQ)$. Moreover, $\pi$ is cohomological.
\end{thm}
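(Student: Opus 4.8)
The statement is the ``going down'' counterpart of Theorem \ref{bcunitaryexistence}, and the plan is to assemble it from the base change results of \cite{harrislabesse} (Theorems $2.2.2$ and $3.1.3$), organized around the three hypotheses on $\Pi$: conjugate self-duality, supercuspidality at the places over $q$, and cohomologicality. First I would recall that, because a weak base change is $Aut(K)$-stable and strong multiplicity one holds on $GL_{n}(\AK)$, conjugate self-duality of $\Pi$ is precisely the condition that places $\Pi$ in the image of stable base change from a unitary group in $n$ variables; this is the characterization in Theorem $2.2.2$ of \cite{harrislabesse}. So there is a discrete automorphic representation on the quasi-split unitary group $U^{*}$ whose weak base change is $\Pi$. (There is a conjugate-orthogonal versus conjugate-symplectic sign to keep track of, but since $\Pi$ is cohomological its archimedean Langlands parameter has regular integral infinitesimal character, which fixes the relevant case and guarantees a descent in the present normalization.)

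The second step is to replace the quasi-split form by the inner form $G=U(r,s)$ attached to $(D_{r,s},*)$, and this is where the remaining hypotheses are spent. At a finite place $w\nmid q$ the algebra $D_{r,s}$ is split, so $G$ is quasi-split there and the local component transfers tautologically. At the archimedean place, $\Pi_{\infty}$ cohomological means its parameter $W_{\R}\to GL_{n}(\C)$ has regular integral infinitesimal character; this is exactly the infinitesimal character of a discrete series $L$-packet of $U(r,s)(\R)$ — which has discrete series for every signature since the diagonal torus $U(1)^{n}$ is compact and maximal — and the restriction of a discrete series parameter of $U(r,s)(\R)$ to $W_{\C}$ does not depend on the signature, so this packet base changes to $\Pi_{\infty}$; I would take $\pi_{\infty}$ to be a member of it, which is where the choice of $(r,s)$ enters. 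At the places $v\mid q$, $\Pi_{v}$ is supercuspidal by hypothesis, hence its Jacquet--Langlands transfer $JL(\Pi_{v})$ to the unit group $D_{r,s,v}^{\times}\cong G(\Q_{q})$ exists and is supercuspidal. Feeding these compatible local choices into the stabilized trace formula and Arthur's multiplicity formula for $G$, as carried out in Theorem $3.1.3$ of \cite{harrislabesse}, produces a discrete automorphic representation $\pi$ of $G(\AQ)$ with the prescribed local components, in particular with weak base change $\Pi$.

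It remains to check that $\pi$ is cuspidal and cohomological. Cohomologicality is built in: $\pi_{\infty}$ lies in a discrete series packet with regular integral infinitesimal character, and discrete series representations are cohomological (they contribute to $(\mathfrak{g}_{\infty},K_{\infty})$-cohomology in degree $rs$). For cuspidality, the component $\pi_{v}$ at a place $v\mid q$ is supercuspidal; a discrete automorphic representation with a supercuspidal local component cannot lie in the residual spectrum, since residual representations are constituents of $L^{2}$-residues of Eisenstein series and hence are Langlands quotients of representations induced from proper parabolic subgroups, which is incompatible with a supercuspidal component. Therefore $\pi$ is cuspidal.

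The main obstacle is not any single estimate but the bookkeeping of packet structure and local compatibility: one must identify the correct member of the archimedean $L$-packet (and, should $\Pi$ fail to be stable, of the relevant global packet) for $U(r,s)$, verify that the Jacquet--Langlands transfer at $q$ and the archimedean choice are simultaneously consistent with the multiplicity formula, and confirm the sign condition in the cohomological case. All of this, however, is precisely the content of \cite{harrislabesse}, so at this level of detail the proof reduces to invoking those results with the hypotheses on $\Pi$ arranged as above.
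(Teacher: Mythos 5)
Your proof follows essentially the same two-step route the paper sketches in its remark: first descend $\Pi$ to the quasi-split unitary group $U^{*}$, then transfer across to the inner form $U(r,s)$ by matching discrete series at infinity and Jacquet--Langlands at $q$, finally checking cuspidality (from the supercuspidal component) and cohomologicality (from the discrete series component). One bookkeeping discrepancy: the paper attributes this ``going down'' direction to Theorems $2.1.2$ and $3.1.2$ of \cite{harrislabesse} (Theorem $3.1.2$ for descent to the quasi-split group, Theorem $2.1.2$ for passing between inner forms), while you cite Theorems $2.2.2$ and $3.1.3$, which the paper uses for the ``going up'' direction in Theorem~\ref{bcunitaryexistence}; you should swap to the paper's references, but the mathematical content of your argument otherwise matches.
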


\begin{rem}
This follows from Theorem $2.1.2$ and Theorem $3.1.2$ of \cite{harrislabesse}. Theorem $3.1.2$ in the \textit{loc.cit} describes the "going down" part for the unique quasi-split unitary group. Theorem $2.1.2$ gives us a way to associate representations of two different unitary groups.
\end{rem}

\end{subsection}

\begin{subsection}{Base change for similitude unitary groups}
We write $GU(r,s)$ for the rational similitude group of $U(r,s)$, i.e. for $R$ any $\Q$-algebra,$$GU(r,s)(R)=\{g\in GL(D_{r,s} \otimes_{\Q}R)\mid  gg^{*}=\lambda(g)Id, \lambda(g) \in R^{\times}\}.$$

We have an exact sequence of $\Q$-groups:

$$1\rightarrow U(r,s)\rightarrow GU(r,s) \rightarrow \mathbb{G}_{m} \rightarrow 1.$$

This exact sequence split in $K$. Indeed, by Galois descent, it is enough to define $\theta_{r,s}$, a Galois automorphism on $U(r,s)_{K}\times \mathbb{G}_{m,K}$ such that the subgroup of $U(r,s)_{K}\times \mathbb{G}_{m,K}$ fixed by $\theta_{r,s}$ is isomorphic to $GU(r,s)$. We now define $\theta_{r,s}$ as follows:

For $R$ a $\Q$-algebra, note that $(U(r,s)_{K}\times \mathbb{G}_{m,K})(R)\cong GL(D_{r,s}\otimes_{\Q}R)\times (K\otimes_{\Q}R)$. We define
$$\theta_{r,s}: GL(D_{r,s}\otimes_{\Q}R)\times (K\otimes_{\Q}R)\rightarrow GL(D_{r,s}\otimes_{\Q}R)\times (K\otimes_{\Q}R)$$ by sending $(g,z)$ to $((g^{*})^{-1}\bar{z},\bar{z})$. It is easy to verify that $\theta_{r,s}$ satisfies the condition mentioned above.

We then have that $GU(r,s)_{K}\cong U(r,s)_{K}\times \mathbb{G}_{m,K}$. In particular, $GU(\AK)\cong GL_{n}(\AK)\times \AK^{\times}$. For $\Pi$ a cuspidal representation of $GL_{n}(\AK)$ and $\xi$ a Hecke character of $K$, $\Pi\otimes \xi$ defines a cuspidal representation of $GU(\AK)$. Conversely, by the tensor product theorem, every irreducible automorphic representation of $GU(\AK)$ can be written uniquely up to isomorphisms in the form $\Pi\otimes \xi$.

We give at first a criterion for $\Pi\otimes \xi$ to be $\theta_{r,s}$-stable which can be deduced without any difficulty:

 \begin{lem}
 The representation $\Pi\otimes \xi$ of $GU(r,s)$ is $\theta_{r,s}$-stable if and only if $\Pi$ is conjugate self-dual and $\omega_{\Pi}=\cfrac{\xi^{c}}{\xi}$ where $\omega_{\Pi}$ is the central character of $\Pi$.
 \end{lem}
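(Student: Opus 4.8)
The statement is an equivalence, so the plan is to unwind both sides into the two tori making up $GU(r,s)_K \cong U(r,s)_K \times \mathbb{G}_{m,K}$ and compare the action of $\theta_{r,s}$ coordinate by coordinate. First I would recall from the construction above that on $R$-points one has $(U(r,s)_K\times\mathbb{G}_{m,K})(R)\cong GL(D_{r,s}\otimes_\Q R)\times(K\otimes_\Q R)$, and that $\theta_{r,s}$ sends $(g,z)$ to $((g^*)^{-1}\bar z,\bar z)$. Correspondingly, an automorphic representation of $GU(\AK)$ written as $\Pi\otimes\xi$ is pulled back along $\theta_{r,s}$ to the representation $(g,z)\mapsto \Pi((g^*)^{-1})\,\xi(\bar z)\otimes\cdots$; the first factor is, by the standard identification of the $U(r,s)_K$-side with $GL_n(\AK)$ sending the nontrivial element of $Gal(K/\Q)$ to $g\mapsto\bar g^{-1}$, precisely the contragredient-conjugate $\Pi^{\vee,c}=\check\Pi$, while on the $\mathbb{G}_{m,K}$-factor $\xi$ becomes $\xi^c$. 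Thus $(\Pi\otimes\xi)^{\theta_{r,s}}\cong \check\Pi\otimes\xi^c$.

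Next I would impose $\theta_{r,s}$-stability, i.e.\ $(\Pi\otimes\xi)^{\theta_{r,s}}\cong\Pi\otimes\xi$, which by the uniqueness in the tensor product theorem forces an isomorphism on each factor: $\check\Pi\cong\Pi\otimes\mu$ for some Hecke character $\mu$ of $K$ and $\xi^c\cong\xi\mu^{-1}$, where $\mu$ is pinned down by central characters. Comparing central characters of $\check\Pi=\Pi^{\vee,c}$ and $\Pi$ gives $\mu^n=\omega_\Pi^{c}\omega_\Pi$, but more to the point, tracking the similitude/determinant carefully on the $GU$-side one sees the correct normalization is $\check\Pi\cong\Pi$ as representations of $GL_n(\AK)$ (i.e.\ $\Pi$ conjugate self-dual) together with the scalar-matching condition $\omega_\Pi=\xi^c/\xi$. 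I would verify the two directions separately: if $\Pi$ is conjugate self-dual and $\omega_\Pi=\xi^c/\xi$, then reassembling the two isomorphisms gives $(\Pi\otimes\xi)^{\theta_{r,s}}\cong\Pi\otimes\xi$; conversely $\theta_{r,s}$-stability restricted to the $U(r,s)_K$-factor gives conjugate self-duality of $\Pi$ and restricted to the center gives the relation on $\omega_\Pi$ and $\xi$.

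The only genuinely delicate point, and the one I expect to be the main obstacle, is bookkeeping the interaction between the decomposition $GU(r,s)_K\cong U(r,s)_K\times\mathbb{G}_{m,K}$ and the isomorphism $U(r,s)_K\cong GL_{n,K}$: because these identifications involve the involution $*$ and the similitude factor $\lambda$, the central character of $\Pi$ and the character $\xi$ are intertwined, and one must be careful that the "conjugate self-dual" normalization is $\check\Pi=\Pi$ on the nose rather than up to a twist that is then absorbed into $\xi$. Once the dictionary $(\Pi\otimes\xi)^{\theta_{r,s}}\cong\check\Pi\otimes\xi^c$ together with the correct placement of the central-character twist is established, the equivalence follows immediately from the uniqueness clause in the tensor product theorem, so the proof is short; this is why the statement is asserted to be deducible "without any difficulty", and I would present it in that spirit, spelling out only the identification of $(\Pi\otimes\xi)^{\theta_{r,s}}$ and leaving the comparison of central characters as the routine check.
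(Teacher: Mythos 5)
The paper does not supply a proof of this lemma (it says the criterion ``can be deduced without any difficulty''), so the only thing to check is whether your argument is correct. It is not, and the error is precisely at the point you flag as delicate.

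The map $\theta_{r,s}$ sends $(g,z)$ to $\bigl((g^*)^{-1}\bar z,\ \bar z\bigr)$. Your decomposition of the pullback drops the factor $\bar z$ in the \emph{first} coordinate: you write $(g,z)\mapsto\Pi\bigl((g^*)^{-1}\bigr)\xi(\bar z)$, but the correct expression is
\begin{equation*}
(\Pi\otimes\xi)^{\theta_{r,s}}(g,z)=\Pi\bigl((g^*)^{-1}\bar z\bigr)\,\xi(\bar z)
=\Pi\bigl((g^*)^{-1}\bigr)\,\omega_{\Pi}(\bar z)\,\xi(\bar z)
=\check\Pi(g)\,\bigl(\omega_{\Pi}^{c}\,\xi^{c}\bigr)(z),
\end{equation*}
so that $(\Pi\otimes\xi)^{\theta_{r,s}}\cong\check\Pi\otimes\omega_{\Pi}^{c}\xi^{c}$, not $\check\Pi\otimes\xi^{c}$. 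That discarded factor $\omega_{\Pi}^{c}$ is exactly what produces the scalar-matching condition: with the corrected decomposition, uniqueness in the tensor product theorem gives $\check\Pi\cong\Pi$ together with $\omega_{\Pi}^{c}\xi^{c}=\xi$, i.e.\ $\omega_{\Pi}=\xi^{c}/\xi$, which is the lemma. With your version, the same uniqueness argument would force $\xi^{c}=\xi$ with no condition relating $\omega_\Pi$ to $\xi$, which is plainly not what is being proved. The auxiliary character $\mu$ you introduce in the next paragraph does not come from the tensor product theorem (there is no twist ambiguity on either factor of $GL_n(\AK)\times\AK^\times$ once the decomposition is fixed), and the concluding assertion that ``tracking the similitude/determinant carefully\ldots one sees the correct normalization'' is not a derivation; it is a statement of the answer you would have obtained had the $\bar z$ in the first coordinate been kept. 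Once that factor is restored, the rest of your outline (identifying $g\mapsto(g^*)^{-1}$ with $\check\Pi$ via the inner automorphism absorbing the Hermitian form, then invoking uniqueness of the tensor decomposition) is sound and does give a short, complete proof.
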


Let us consider now the base change for $G=GU(r,s)$. Let $\pi$ be a cuspidal representation of $G(\AQ)$. If $\pi$ admits a base change, then it is unique by the strong multiplicity one theorem for $GL_{n}\times GL_{1}$. The following theorem on the existence can be proved as for Theorem $VI.2.1$ in \cite{harristaylor} where we use Theorem \ref{bcunitaryexistence} in stead of Clozel's theorem (Theorem $A.5.2$ of \cite{clozellabesse}) on the existence of base change for $U(n-1,1)$.

\begin{thm}
Let $\pi$ be a cuspidal automorphic representation of $GU(r,s)(\AQ)$. Assume $\pi$ is cohomological. If the Jacquet-Langlands transfer $JL(\pi|_{U(r,s),q})$ to $GL_{n}(\Q_{q})$ is supercuspidal, then the weak base change of $\pi$ exists and then unique. Moreover, it is $\theta_{r,s}$-stable and cohomological.

If we write the base change of $\pi$ in the form $\Pi\otimes \xi$ where $\Pi$ is cuspidal representation of $GL_{n}(\AK)$ and $\xi$ a Hecke character of $K$, then the last statement is equivalent to say that:
\begin{itemize}
\item $\Pi$ is conjugate self-dual and cohomological.
\item $\omega_{\Pi}(z)=\xi(\cfrac{\overline{z}}{z})$ for all $z\in \AK^{\times}$.
\end{itemize}
\end{thm}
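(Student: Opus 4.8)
The plan is to follow closely the proof of Theorem~VI.2.1 of \cite{harristaylor}, substituting Theorem~\ref{bcunitaryexistence} for the appeal there to Clozel's base change theorem for $U(n-1,1)$. First I would restrict $\pi$ along $U(r,s)\hookrightarrow GU(r,s)$. Using the exact sequence $1\to U(r,s)\to GU(r,s)\to \mathbb{G}_{m}\to 1$, the restriction $\pi|_{U(r,s)(\AQ)}$ decomposes as a finite direct sum of cuspidal automorphic representations of $U(r,s)(\AQ)$, any two of which differ by a twist by $\chi\circ\lambda$ for some Hecke character $\chi$ of $\Q$, where $\lambda$ is the similitude character. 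Choose one constituent $\pi_{0}$. It is cuspidal; it is cohomological because $\pi$ is and $GU(r,s)(\R)$ differs from $U(r,s)(\R)$ only by a central torus, so cohomologicality is inherited; and the hypothesis that $JL(\pi|_{U(r,s),q})$ is supercuspidal says precisely that $\pi_{0}$ satisfies the hypothesis of Theorem~\ref{bcunitaryexistence} at $q$. That theorem then yields $\Pi$, the unique weak base change of $\pi_{0}$ to $GL_{n}(\AK)$, and $\Pi$ is cuspidal, conjugate self-dual and cohomological.

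Next I would reassemble the base change of $\pi$ itself. Under the splitting $GU(r,s)_{K}\cong U(r,s)_{K}\times\mathbb{G}_{m,K}$, hence $GU(r,s)(\AK)\cong GL_{n}(\AK)\times\AK^{\times}$, write the sought base change of $\pi$ in the form $\Pi'\otimes\xi$. The $L$-morphism defining base change for $GU(r,s)$ is, through this splitting, compatible with the ones for $U(r,s)$ and for $\mathbb{G}_{m}$; comparing Satake parameters at the finite places where everything is unramified shows that $\Pi'_{w}$ is the base change of $\pi_{0,v}$, whence $\Pi'=\Pi$ by strong multiplicity one for $GL_{n}$ over $K$, while $\xi$ is determined by the similitude character together with the central character of $\pi$ via the corresponding recipe for $\mathbb{G}_{m}$. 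Thus $\Pi\otimes\xi$ is a weak base change of $\pi$, and it is unique by strong multiplicity one for $GL_{n}\times GL_{1}$ over $K$.

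It remains to check the ``moreover'' clause. The image of weak base change is $Aut(K)$-stable (by the remark following the definition of weak base change, together with strong multiplicity one), so $\Pi\otimes\xi$ is $\theta_{r,s}$-stable; by the Lemma characterising $\theta_{r,s}$-stability, this is equivalent to $\Pi$ being conjugate self-dual and $\omega_{\Pi}=\xi^{c}/\xi$, i.e. $\omega_{\Pi}(z)=\xi(\overline{z}/z)$ for all $z\in\AK^{\times}$. Finally $\Pi$ is cohomological by Theorem~\ref{bcunitaryexistence}, and $\xi$ is algebraic because $\pi$ is cohomological for $GU(r,s)$, so $\Pi\otimes\xi$ is cohomological for $GU(r,s)$. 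I expect the main obstacle to be the bookkeeping in the $GU(r,s)$ versus $U(r,s)\times\mathbb{G}_{m}$ dictionary: one must ensure that cuspidality, cohomologicality and the supercuspidality at $q$ all descend to the chosen constituent $\pi_{0}$, and that the base change of $\pi_{0}$ glues with the base change of the similitude/central data into a single representation that is genuinely a weak base change of $\pi$ and not merely of $\pi_{0}$; this is precisely what is handled in \cite{harristaylor}, the only new ingredient being the substitution of Theorem~\ref{bcunitaryexistence} for Clozel's theorem.
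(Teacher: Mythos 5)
Your proposal follows exactly the route the paper itself indicates: the paper's ``proof'' of this theorem is a one-line reference to Theorem~VI.2.1 of \cite{harristaylor}, with Theorem~\ref{bcunitaryexistence} substituted for Clozel's base-change theorem for $U(n-1,1)$, and your expansion of that argument (restrict to $U(r,s)$, pick a cuspidal cohomological constituent $\pi_{0}$, base-change $\pi_{0}$, reassemble via the splitting $GU(r,s)_{K}\cong U(r,s)_{K}\times\mathbb{G}_{m,K}$, then invoke the $\theta_{r,s}$-stability lemma) is the right one. One small inaccuracy worth fixing: the constituents of $\pi|_{U(r,s)(\AQ)}$ form a single $GU(r,s)(\AQ)$-conjugacy class, not an orbit under twisting by $\chi\circ\lambda$---indeed $\lambda$ is trivial on $U(r,s)$, so such a twist is trivial there; the twist-by-$\chi\circ\lambda$ statement is the correct description of the non-uniqueness at the $GU$ level (different $\pi$ with a common restriction), not of the constituents of a fixed restriction. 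Since your argument only uses the existence of one cuspidal cohomological constituent $\pi_{0}$ satisfying the supercuspidality condition at $q$, this slip does not affect the proof.
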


For the going down part, we have:

\begin{thm}\label{bcsimilitude}
Let $\Pi\otimes\xi$ be a cuspidal representation of $GU(r,s)(\AK)$ where $\Pi$ is a cuspidal representation of $GL_{n}(\AK)$ and $\xi$ is a Hecke character of $\chi$. Let $W$ be a finite dimensional representation of $GU(r,s)(\R)$ and we denote $W_{K}=W\otimes W$ a representation of $GU(r,s)_{K}(\R)$. Assume these data satisfy:
\begin{itemize}
\item $\Pi$ is conjugate self-dual;
\item $\Pi$ is cohomological for $W_{K}$.
\item $\Pi$ is supercuspidal at places over $q$.
\item $\omega_{\Pi}=\cfrac{\xi^{c}}{\xi}$.
\item $W|_{K_{\infty}^{\times}}=\check{\xi}_{\infty}$ where $K_{\infty}$ is the completion of $K$ at the infinite place and $K_{\infty}^{\times}$ embeds into $GU(r,s)(\R)$.

\end{itemize}

Then $\Pi\otimes \xi$ descends to a cuspidal representation of $GU(r,s)(\AQ)$ which is cohomological for $W$.

\end{thm}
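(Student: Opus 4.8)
\emph{Sketch of proof.} The plan is to reduce the statement to the ``going down'' theorem for the unitary group $U(r,s)$ (Theorem \ref{bcunitary}) and then extend the resulting automorphic representation across the similitude. First, $\Pi$ by itself satisfies the hypotheses of Theorem \ref{bcunitary}: it is cuspidal, conjugate self-dual, supercuspidal at the places over $q$, and cohomological (for $W_{K}$). Hence $\Pi$ is the weak base change of a cuspidal cohomological representation $\pi_{0}$ of $U(r,s)(\AQ)$; comparing infinitesimal characters, the coefficient system for $\pi_{0}$ is the restriction $W|_{U(r,s)(\R)}$, i.e. the representation descending $W_{K}$.

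Next I would rebuild a representation $\pi$ of $GU(r,s)(\AQ)$ from $\pi_{0}$ and $\xi$. Write $Z$ for the centre of $GU(r,s)$, so $Z\cong Res_{K/\Q}\mathbb{G}_{m,K}$, $Z(\AQ)\cong\AK^{\times}$ with similitude $z\mapsto z\bar z=N_{K/\Q}(z)$, and $Z(\AQ)\cap U(r,s)(\AQ)=U(1)(\AQ)$. The hypotheses $\omega_{\Pi}=\xi^{c}/\xi$ and $W|_{K_{\infty}^{\times}}=\check{\xi}_{\infty}$ are exactly what is needed so that $\xi$ determines a character $\mu$ of $Z(\AQ)$ agreeing with the central character of $\pi_{0}$ on $Z(\AQ)\cap U(r,s)(\AQ)$ and compatible at infinity with the central character of $W$. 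One then forms $\pi_{0}\otimes\mu$ on $U(r,s)(\AQ)\cdot Z(\AQ)$ and extends it to $GU(r,s)(\AQ)$: the quotient $GU(r,s)(\AQ)/(U(r,s)(\AQ)Z(\AQ))$ is abelian, injecting via the similitude into $\AQ^{\times}/N_{K/\Q}(\AK^{\times})$, so such an extension exists — after possibly passing to an irreducible constituent of an induced representation — and can be realised in the space of cusp forms on $GU(r,s)(\AQ)$, exactly as in the proof of Theorem VI.2.1 of \cite{harristaylor} with Theorem \ref{bcunitary} replacing Clozel's base change theorem. The two automorphic extensions differ by the twist $\varepsilon_{K}\circ\lambda$ (where $\lambda$ is the similitude), which becomes trivial after base change to $K$, so for this existence statement no disambiguation is required.

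It remains to verify three properties of $\pi$. First, $\pi$ is cuspidal: a constant term of a form in $\pi$ along a proper parabolic of $GU(r,s)$ restricts on $U(r,s)(\AQ)$ to a constant term of a form in $\pi_{0}$ along the corresponding proper parabolic of $U(r,s)$, hence vanishes, and therefore vanishes identically. Second, the weak base change of $\pi$ is $\Pi\otimes\xi$: at the finite places where everything is unramified and split one has $GU(r,s)_{K}\cong U(r,s)_{K}\times\mathbb{G}_{m,K}$, and there the base change of $\pi_{0,v}$ is $\Pi_{w}$ by construction while the $\mathbb{G}_{m}$-component of the base change of $\pi_{v}$ is read off from $\mu$, so with $\omega_{\Pi}=\xi^{c}/\xi$ the two factors assemble into $\Pi_{w}\otimes\xi_{w}$. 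Third, $\pi$ is cohomological for $W$: since $GU(r,s)$ and $U(r,s)$ have the same derived group and differ by a central torus, $(\mathfrak{g}_{\infty},K_{\infty})$-cohomology for $GU(r,s)$ is computed from that for $U(r,s)$ together with the condition that the central character of $\pi_{\infty}$ on $Z(\R)$ match that of $W$; the first input is nonzero because $\pi_{0}$ is cohomological for $W|_{U(r,s)(\R)}$, and the second holds by $W|_{K_{\infty}^{\times}}=\check{\xi}_{\infty}$.

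The main obstacle is the extension step. Abstractly, a representation of $GU(r,s)(\AQ)$ restricts to $U(r,s)(\AQ)$ as a possibly reducible sum over an $L$-packet, and conversely $\pi_{0}\otimes\mu$ admits several extensions to $GU(r,s)(\AQ)$; one must produce among them one that is genuinely a cuspidal \emph{automorphic} representation — not merely an abstract representation of the adelic group — and this, together with controlling its cuspidality, is the delicate point where the argument follows \cite{harristaylor} closely. The hypotheses $\omega_{\Pi}=\xi^{c}/\xi$ and $W|_{K_{\infty}^{\times}}=\check{\xi}_{\infty}$ enter precisely to make the central characters match at the finite and at the archimedean places, respectively, so that both the construction and the cohomology computation go through.
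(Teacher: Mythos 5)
Your approach is the same as the paper's, which simply reads ``This follows from Theorem \ref{bcunitary}. The proof is the same as Theorem VI.2.9 of \cite{harristaylor}''; you have expanded exactly that reduction-to-$U(r,s)$-plus-extension argument. One small citation slip: the descent step you are describing is Theorem VI.2.9 of \cite{harristaylor}, not VI.2.1 (which is the ``going up'' direction and is what the paper cites for the existence theorem immediately above).
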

This follows from Theorem \ref{bcunitary}. The proof is the same as Theorem $VI.2.9$ of \cite{harristaylor}.

Sometimes we start with a representation of $GL_{n}(\AK)$. In this case, the following lemma will be useful (c .f. Lemma $VI.2.10$ of \cite{harristaylor}):
\begin{lem}\label{lemmaxi}
Let $\Pi$ be a conjugate self-dual cuspidal representation of $GL_{n}(\AK)$. We assume $\Pi$ is cohomological and supercuspidal at places over $q$. We then have that there exists $\xi$, a Hecke character of $K$ and $W$, a finite dimensional representation of $GU(r,s)(\R)$ such that all the conditions in Theorem \ref{bcsimilitude} are satisfied.
\end{lem}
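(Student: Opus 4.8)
The plan is to construct $\xi$ and $W$ explicitly, using the auxiliary Hecke characters $\psi$ and $\widetilde{\varepsilon_K}$ introduced earlier, so that the five hypotheses of Theorem \ref{bcsimilitude} hold. First I would record what the constraints say. Since $\Pi$ is conjugate self-dual, its central character $\omega_\Pi$ satisfies $\omega_\Pi^c = \omega_\Pi^{-1}$; cohomology of $\Pi$ forces $\omega_\Pi$ to be algebraic, so $\omega_\Pi$ has an infinity type of the form $z^a \bar z^b$, and conjugate self-duality together with the purity coming from cohomology pins down $a+b$. The condition $\omega_\Pi = \xi^c/\xi$ means $\xi$ must be a ``square root'' of $\omega_\Pi$ in the sense of the map $\xi \mapsto \xi^c/\xi$; the condition $W|_{K_\infty^\times} = \check\xi_\infty$ then forces the restriction of $W$ to the central $K_\infty^\times \hookrightarrow GU(r,s)(\R)$, which is consistent provided the central character of the representation $W_K$ of $GU(r,s)_K(\R)$ matches $\omega_\Pi$ restricted to the centre — and this last compatibility is automatic from ``$\Pi$ is cohomological for $W_K$''.

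The key step is the existence of a suitable $\xi$. The map $\xi \mapsto \xi^c/\xi$ from Hecke characters of $K$ to conjugate self-dual Hecke characters of $K$ is, by Hilbert 90 / class field theory, surjective onto the subgroup of conjugate self-dual characters that are trivial on $\AQ^\times$ (this is exactly the computation made earlier for $U(1)$), with the two preimages of a given target differing by a character descending from $U(\AQ)$. Given $\omega_\Pi$, one first checks $\omega_\Pi$ lies in the image: conjugate self-duality gives triviality on $N(\AK^\times)$, and the possible obstruction on $\AQ^\times/\Q^\times N(\AK^\times) \cong \Z/2\Z$ is killed after twisting by $\widetilde{\varepsilon_K}$ if necessary — but twisting $\Pi$ is not allowed, so instead one absorbs the correction into $\xi$ by allowing $\xi$ to be non-conjugate-self-dual; concretely one writes $\omega_\Pi = \nu^c/\nu$ for an explicit $\nu$ built from $\psi$ (whose infinity type $z$ and relation $\psi\psi^c = \|\cdot\|$ let one realise any desired infinity type and central behaviour), possibly twisted by a finite-order character to handle the component-group obstruction. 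Then I set $\xi := \nu$. This is precisely the content of Lemma $VI.2.10$ of \cite{harristaylor}, which we may follow verbatim, the only change being to cite Theorem \ref{bcunitaryexistence}/Theorem \ref{bcunitary} in place of the $U(n-1,1)$ input.

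Having fixed $\xi$, I would then choose $W$. The representation $W_K$ of $GU(r,s)_K(\R) \cong U(r,s)_K(\R) \times \mathbb{G}_{m,K}(\R)$ for which $\Pi$ is cohomological is of the form (finite-dim rep of $GL_n$)$\,\otimes\,$(character of $\AK^\times$), and the character factor is determined up to the algebraicity already imposed. I would take $W$ to be the (unique up to the split exact sequence $1 \to U(r,s) \to GU(r,s) \to \mathbb{G}_m \to 1$) finite-dimensional representation of $GU(r,s)(\R)$ whose base change to $GU(r,s)_K(\R)$ is $W_K$ and whose central restriction is $\check\xi_\infty$; existence follows because restricting along the diagonal $\mathbb{G}_{m,K} \hookrightarrow GU(r,s)_K$ is compatible with the Galois descent $GU(r,s)_K \cong U(r,s)_K \times \mathbb{G}_{m,K}$, and the infinitesimal/central data are matched by construction of $\xi$. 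Then all five bullet points of Theorem \ref{bcsimilitude} hold: the first three are hypotheses on $\Pi$, the fourth is how $\xi$ was chosen, and the fifth is how $W$ was chosen.

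The main obstacle is the component-group issue in the second paragraph: the naive square root of $\omega_\Pi$ under $\xi \mapsto \xi^c/\xi$ need not exist with the exact infinity type one wants, and one cannot twist $\Pi$ to fix it. The resolution — allowing $\xi$ to carry a finite-order twist by (a lift of) $\varepsilon_K$ and keeping track of how that propagates into the required $W$ — is exactly the bookkeeping carried out in Lemma $VI.2.10$ of \cite{harristaylor}, so the cleanest write-up simply invokes that lemma after noting its hypotheses are met here. Everything else is a direct verification.
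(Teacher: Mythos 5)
Your proof takes the same route as the paper, which establishes this lemma simply by citing Lemma~VI.2.10 of \cite{harristaylor} (the citation is embedded in the sentence introducing the lemma); you cite the same result and merely add a heuristic unpacking of what lies behind it. One slip in that unpacking is worth flagging: the kernel of the map $\xi\mapsto\xi^{c}\xi^{-1}$ consists of the conjugation-\emph{fixed} characters, i.e.\ those pulled back from $\AQ^{\times}$ via $N_{\AK/\AQ}$, not of conjugate-self-dual characters descending from $U(\AQ)$ (the latter satisfy $\xi^{c}=\xi^{-1}$, not $\xi^{c}=\xi$), and the fibre over $\omega_{\Pi}$ is accordingly a coset of this infinite kernel rather than a two-element set --- the ``two'' in the $U(1)$ discussion counts something else, namely which of $\chi,\chi\widetilde{\varepsilon_{K}}$ descends; since you ultimately defer to the cited lemma, this imprecision does not affect the validity of your argument.
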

 \end{subsection}

\end{section}

\begin{section}{Automorphic period}
\begin{subsection}{Motive}
In this article, a \textbf{motive} simply means a pure motive for absolute Hodge cycles in the sense of Deligne \cite{deligne79}. More precisely, a motive over $\Q$ with coefficient in a number field $E$ is given by its Betti realization $M_{B}$, its de Rham realization $M_{DR}$ and its $l$-adic realization $M_{l}$ for all prime numbers $l$ where $M_{B}$ and $M_{DR}$ are finite dimensional vector space over $E$, $M_{l}$ is a finite dimensional vector space over $E_{l}:=E\otimes_{\Q} \Q_{l}$ endowed with:
\begin{itemize}
\item $I_{\infty}: M_{B}\otimes \C \xrightarrow{\sim}  M_{DR}\otimes \C $ as $E\otimes_{\Q}\C$-module;
\item $I_{l}: M_{B}\otimes \Q_{l}\xrightarrow{\sim} M_{l}$ as $E\otimes _{\Q} \Q_{l}$-module.

\end{itemize}
From the isomorphisms above, we see that $dim_{E}M_{B}=dim_{E}M_{DR}=dim_{E_{l}}M_{l}$ and is called the \textbf{rank} of $M$. We need moreover:
\begin{enumerate}
\item An $E$-linear involution (infinite Frobenius) $F_{\infty}$ on $M_{B}$ and a Hodge decomposition $M_{B}\otimes \C=\bigoplus\limits_{p,q\in \Z}M^{p,q}$ as $E\otimes \C$-module such that $F_{\infty}$ sends $M^{p,q}$ to $M^{q,p}$.

For $w$ an integer, we say $M$ is \textbf{pure of weight} $w$ if $M^{p,q}=0$ for $p+q\neq w$. Throughout this paper, all the motives are assumed to be pure. We assume also $F_{\infty}$ acts on $M^{p,p}$ as a scalar for all $p\in \Z$.

We say $M$ is \textbf{regular} if $dim M^{p,q}\leq 1$ for all $p,q\in\Z$.


\item An $E$-rational Hodge filtration on $M_{DR}$: $\cdots \supset M^{i}\supset M^{i+1}\supset \cdots$ which is compatible with the Hodge structure on $M_{B}$ via $I_{\infty}$, i.e.,
$$I_{\infty}(\bigoplus\limits_{p\geq i}M^{p,q})=M^{i}\otimes \C.$$
\item A Galois action of $G_{\Q}$ on each $M_{l}$ such that $(M_{l})_{l}$ forms a compatible system of $l$-adic representations $\rho_{l}:G_{\Q} \longrightarrow GL(M_{l})$. More precisely, for each prime number $p$, let $I_{p}$ be the inertia subgroup of a decomposition group at $p$ and $F_{p}$ the geometric Frobenius of this decomposition group. We have that for all $l\neq p$, the polynomial  $det(1-F_{p}|M_{l}^{I_{p}})$ has coefficients in $E$ and is independent of the choice of $l$.
We can then define $L_{p}(s,M):=det(1-p^{-s}F_{p}|M_{l}^{I_{p}})^{-1}\in E(p^{-s})$ for whatever $l\neq p$.
\end{enumerate}

For any fixed embedding $\sigma: E\hookrightarrow \C$, we may consider $L_{p}(s,M,\sigma)$ as a complex valued function. We define $L(s,M,\sigma)=\prod\limits_{p}L_{p}(s,M,\sigma)$. It converges for $Re(s)$ sufficiently large. It is conjectured that the $L$-function has analytic continuation and functional equation on the whole complex plane.

We can also define $L_{\infty}(s,M)$, the infinite part of the $L$-function, as in chapter $5$ of \cite{deligne79}.

Deligne has defined the critical values for $M$ as follows:

\begin{df}
We say an integer $m$ is \textbf{critical} for $M$ if neither $L_{\infty}(M,s)$ nor $L_{\infty}(\check{M},1-s)$ has a pole at $s=m$ where $\check{M}$ is the dual of $M$. We call $m$ a \textbf{critical value} of $M$.
\end{df}

\begin{rem}\label{infinitytype}
The notion $L_{\infty}(s,M)$ implicitly indicates that the infinity type of the $L$-function does not depend on the choice of $\sigma:E\hookrightarrow \C$. More precisely, for every $\sigma:E\hookrightarrow \C$, put $M_{B,\sigma}:=M_{B}\otimes_{E,\sigma}\C$. We then have $M_{B}\otimes \C=\bigoplus\limits_{\sigma:E\hookrightarrow \C}M_{B,\sigma}$. Since $M^{p,q}$ is stable by $E$, each $M_{B,\sigma}$ inherits a Hodge decomposition $M_{B,\sigma}=\bigoplus M_{B,\sigma}^{p,q}$. We may define $L_{\infty}(s,M,\sigma)$ with help of the Hodge decomposition of $M_{B}\otimes_{E,\sigma}\C$. It is a product of $\Gamma$ factors which depend only on $\dim M_{B,\sigma}^{p,q}$ and the action of $F_{\infty}$ on $M_{B,\sigma}^{p,p}$. The latter is independent of $\sigma$ since we have assumed that $F_{\infty}$ acts on $M^{p,p}$ by a scalar.

It remains to show that $\dim M_{B,\sigma}^{p,q}$ is also independent of $\sigma$. In fact, since $M$ is pure, $M^{p,q}$ can be reconstructed from the Hodge filtration $M^{i}$. Hence $M^{p,q}=\bigoplus M_{B,\sigma}^{p,q}$ is a free $E\otimes\C$-module. One can show $M_{B,\sigma}^{p,q}=M^{p,q}\otimes_{E,\sigma}\C$ and hence $dim M_{B,\sigma}^{p,q}$ is independent of $\sigma$.

\end{rem}

Deligne has defined $c^{+}(M), c^{-}(M)\in (E\times \C)^{\times}$ explicitly from the infinity type of $M$, called the Deligne's period. He has conjectured that:
\begin{conj} If $0$ is critical for $M$, then
$$(L(0,M,\sigma))_{\sigma\in \Sigma_{E}}\sim_{E} c^{+}(M).$$
\end{conj}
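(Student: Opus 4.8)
The statement is Deligne's conjecture, which is open in general; accordingly, any plan of proof can only reach the families of motives that are accessible by the Shimura-variety and period techniques developed in this paper, namely the rank-one motives attached to algebraic Hecke characters of CM fields and the tensor-product (Rankin--Selberg) motives $M(\Pi)\otimes M(\Pi')$ attached to the representations $\Pi$, $\Pi'$ considered above. For the first family the plan is: (i) attach to a motivic critical Hecke character $\chi$ of a CM field $F$ its rank-one motive $M(\chi)$ (over $F$, or its Weil restriction to $\Q$), whose $L$-function is the Hecke $L$-function $L(s,\chi)$ and whose Hodge--de Rham data are read off from the infinity type $\chi_\infty$; (ii) compute $c^{+}(M(\chi))$ directly from Deligne's recipe, expressing it through the Betti/de Rham comparison on the CM torus $(T_{F},h_{\Phi_{\chi}})$ in terms of the CM period $p(\check{\chi},\Phi_{\chi})$, a power of $2\pi i$, and $D_{F^{+}}^{1/2}$; (iii) identify the critical value $L(0,M(\chi))$ with a critical Hecke $L$-value $L(\chi,m)$ and apply Blasius's Theorem~\ref{blasius}, which gives $L(\chi,m)\sim_{E(\chi)} D_{F^{+}}^{1/2}(2\pi i)^{mn}p(\check{\chi},\Phi_{\chi})$ equivariantly under $G_{\Q}$; (iv) match (ii) and (iii) to conclude $(L(0,M(\chi),\sigma))_{\sigma}\sim_{E}c^{+}(M(\chi))$. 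For this family the argument is essentially a repackaging of Blasius's work.

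For the Rankin--Selberg family the plan is: (a) take Harris's motives $M(\Pi)$, $M(\Pi')$, so that the relevant $L$-function of $M(\Pi)\otimes M(\Pi')$ is a shift of $L(s,\Pi\times\Pi')$; (b) apply the Grobner--Harris formula writing $L(m+\frac12,\Pi\times\Pi')$ as a product of automorphic periods $P^{(s)}(\Pi)$, $P^{(s')}(\Pi')$, explicit archimedean local factors, and an algebraic number; (c) invoke the result of Harris et al.\ that the combination of automorphic periods in (b) agrees, up to $\overline{\Q}^{\times}$, with Deligne's period $c^{+}$ of the tensor-product motive; (d) show that the archimedean factors in (b) reduce to a power of $2\pi i$ up to $\overline{\Q}^{\times}$. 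For (d) one exploits that these factors depend only on $\Pi_\infty$, $\Pi'_\infty$: it suffices to verify it for one convenient pair with the same infinity types, and taking $\Pi$, $\Pi'$ to be automorphic inductions of Hecke characters over CM fields one computes $L(m+\frac12,\Pi\times\Pi')$ both by Grobner--Harris (with the automorphic periods replaced by CM periods via the main theorem of this paper) and by Blasius's formula for the underlying Hecke $L$-function; comparing the two expressions pins the product of archimedean factors down to the asserted power of $2\pi i$. This is the strategy announced in the introduction, and Hypothesis~\ref{m=0} is invoked to handle the boundary case $m=0$.

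The main obstacle is twofold. Most fundamentally, there is no known construction attaching to an arbitrary motive $M$ an automorphic representation realizing $L(s,M)$ together with a special-value formula, so the argument is intrinsically limited to motives that are a priori automorphic. Within the accessible cases the crux is the period comparison itself: showing that the automorphic periods $P^{(s)}$ (Petersson norms of rational classes in coherent cohomology), respectively the CM periods $p(\chi,\Phi)$, coincide with Deligne's motivic period $c^{+}$ up to algebraic numbers and, crucially, \emph{equivariantly} for the relevant Galois action. Because the relation $\sim$ is not transitive (Remark~\ref{transitive}), one cannot simply chain the various comparisons modulo $\overline{\Q}^{\times}$; one must track the coefficient field and the $G_{K}$- (resp.\ $G_{\Q}$-) equivariance throughout, which is exactly why all the identities above are stated in the $\sim_{E;E'}$ formalism, and why the non-vanishing of the relevant special $L$-values (supplied by the unitary-period results cited in the introduction) is needed before one is allowed to divide in the period relations.
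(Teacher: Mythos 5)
You have correctly identified that the statement in question is Deligne's conjecture, stated in the paper as a conjecture and not proved there; the paper offers no proof for you to be compared against, and a blind "proof" would have been a red flag. Your response is therefore the right one: you decline to claim a proof, and instead you describe the families of motives for which the paper's machinery (together with Blasius's theorem) does establish the conjectured relation.

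Your sketch of those special cases matches the paper's actual content well. For rank-one CM motives, steps (i)--(iv) are essentially a restatement of Theorem~\ref{blasius}, and the paper indeed treats this as an input rather than a new result. For the Rankin--Selberg motives $M(\Pi)\otimes M(\Pi')$, your steps (a)--(d) reproduce the strategy of Section 5: apply Theorem~\ref{maintheorem2}, compare with Blasius via automorphic induction of Hecke characters, deduce that the archimedean factors $p(m,\Pi_\infty,\Pi'_\infty)Z(\Pi_\infty)Z(\Pi'_\infty)$ collapse to a power of $2\pi i$, then transport this back to general $(\Pi,\Pi')$ using the fact that these factors depend only on the infinity types (Lemma~\ref{chiandchi'}), invoking Hypothesis~\ref{m=0} at $m=0$; finally cite equation $(4.12)$ of \cite{harrismotivic} to identify the resulting product of automorphic periods with $c^{+}$. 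You also correctly flag the two genuine subtleties the paper is careful about --- the non-transitivity of $\sim$ (Remark~\ref{transitive}), which forces one to keep track of non-vanishing before dividing, and the need for $G_K$-equivariance rather than a bare $\overline{\Q}^\times$-relation. There is nothing to correct: the statement has no proof in the paper, and your account of what the paper \emph{does} prove in the direction of this conjecture is accurate.
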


More generally, tensoring $M$ by the Tate motive $\Q(m)$ (c.f. \cite{deligne79} chapter $1$), we obtained a new motive $M(m)$. We remark that $L(s,M(m),\sigma)=L(s+m,M,\sigma)$. The following conjecture is a corollary of the previous conjecture:
\begin{conj}
If $m$ is critical for $M$, then
$$(L(m,M,\sigma))_{\sigma\in \Sigma_{E}}\sim_{E} (2\pi i)^{d^{+}n}c^{+}(M) \text{ if } m \text{ is even };$$
$$(L(m,M,\sigma))_{\sigma\in \Sigma_{E}}\sim_{E} (2\pi i)^{d^{-}n} c^{-}(M) \text{ if } m \text{ is odd }$$

where $d^{+}$ and $d^{-}$ are two integers.
\end{conj}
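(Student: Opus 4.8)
The final statement to prove is the second conjecture, which derives the critical value formula at general $m$ from the case $m=0$.

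\medskip

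The plan is to reduce the general critical-value conjecture to the stated conjecture at $0$ by a Tate twist, keeping careful track of how Deligne's periods transform. First I would recall, following Deligne \cite{deligne79} Chapter 1, that tensoring with the Tate motive $\Q(m)$ produces $M(m)$ with $L(s,M(m),\sigma)=L(s+m,M,\sigma)$, so that $m$ is critical for $M$ if and only if $0$ is critical for $M(m)$. Applying the first conjecture to $M(m)$ gives
$$(L(m,M,\sigma))_{\sigma\in\Sigma_{E}}=(L(0,M(m),\sigma))_{\sigma\in\Sigma_{E}}\sim_{E}c^{+}(M(m)).$$
The remaining task is purely formal: express $c^{+}(M(m))$ in terms of $c^{+}(M)$ or $c^{-}(M)$ times a power of $2\pi i$. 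This is exactly the content of Deligne's computation of the behaviour of his periods under Tate twists: $c^{\pm}(M(m))\sim_{E}(2\pi i)^{md^{\pm}}c^{\epsilon(m)}(M)$, where the sign $\epsilon(m)$ flips with the parity of $m$ because $F_{\infty}$ on the Tate motive $\Q(1)$ acts by $-1$, and where $d^{\pm}=\dim_{E}M_{B}^{\pm}$ is the dimension of the $F_{\infty}=\pm1$ eigenspace of $M_{B}$.

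\medskip

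Concretely, I would split $M_{B}\otimes\C$ into its Hodge pieces and track the $F_{\infty}$-action. Writing $d^{\pm}$ for the ranks of the $\pm$-eigenspaces of $F_{\infty}$ on $M_{B}$, Deligne's period $c^{+}(M)$ is a determinant of the comparison isomorphism $I_{\infty}$ restricted to $M_{B}^{+}$ composed with projection onto $F^{-}M_{DR}$ (and similarly $c^{-}$ with the roles of $+$ and $-$ reversed), computed in $E$-rational bases. Twisting by $\Q(m)$ multiplies $I_{\infty}$ by $(2\pi i)^{m}$ on each of the relevant basis vectors, contributing $(2\pi i)^{md^{+}}$ when $m$ is even (since then $\Q(m)$ has trivial $F_{\infty}$, preserving the eigenspace decomposition), and contributing $(2\pi i)^{md^{-}}$ when $m$ is odd, because the odd Tate twist swaps the $+$ and $-$ eigenspaces of $F_{\infty}$, so that $c^{+}(M(m))$ is now governed by $M_{B}^{-}$. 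This yields the two displayed formulas with $n$ in the statement equal to the degree of $E$ over $\Q$ appearing through the identification $(L(m,M,\sigma))_{\sigma\in\Sigma_{E}}\in E\otimes\C$ — more precisely, the power written as $(2\pi i)^{d^{\pm}n}$ should be read inside $E\otimes\C$ as $(2\pi i)^{md^{\pm}}$ on each coordinate, and I would state it in the cleaner coordinate-wise form.

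\medskip

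The main obstacle is bookkeeping rather than any genuine difficulty: one must be careful that Deligne's periods $c^{\pm}$ are only well-defined modulo $E^{\times}$ (they live in $(E\otimes\C)^{\times}$), that the relation $\sim_{E}$ is symmetric but not transitive (Remark \ref{transitive}), and that all the manipulations are equivariant for the $\Sigma_{E}$-action, so that the twist relation must be verified coordinate-by-coordinate over all embeddings $\sigma:E\hookrightarrow\C$ simultaneously. Since the Tate motive $\Q(m)$ has coefficients in $\Q$ and its periods are literally powers of $2\pi i$ (up to $\Q^{\times}$), there is no $\sigma$-dependence in the twist factor, so the compatibility is automatic; the only real point to check is the parity-dependent eigenspace swap, which follows from $F_{\infty}$ acting by $(-1)^{m}$ on $\Q(m)_{B}$. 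Assembling these observations gives the claimed corollary.
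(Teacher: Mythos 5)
Your approach is exactly the one the paper intends: the second conjecture is stated as a formal corollary of Deligne's conjecture at $0$, obtained by tensoring with $\Q(m)$, using $L(m,M,\sigma)=L(0,M(m),\sigma)$, and invoking Deligne's computation of the transformation of $c^{\pm}$ under Tate twist, where the parity-dependent swap of the $\pm$ eigenspaces comes from $F_{\infty}$ acting by $(-1)^{m}$ on $\Q(m)_{B}$. The only slip is your passing suggestion that the $n$ appearing in the exponents is $[E:\Q]$; in fact it denotes the critical integer (the paper uses $n$ interchangeably with $m$ for the critical value in the surrounding paragraph), so the correct per-coordinate exponent is $m\,d^{\pm}$, which is indeed what you write in the end.
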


Deligne has given a criteria to determine whether $0$ is critical for $M$ (see $(1.3.1)$ of \cite{deligne79}). We observe that $n$ is critical for $M$ if and only if $0$ is critical for $M(n)$. Thus we can rewrite the criteria of Deligne for arbitrary $n$. In the case where $M^{p,p}=0$ for all $p$, this criteria becomes rather simple.

We first define the \textbf{Hodge type} of $M$ by the set $T=T(M)$ consisting of pairs $(p,q)$ such that $M^{p,q}\neq 0$. Since $M$ is pure, there exists an integer $w$ such that $p+q=w$ for all $(p,q)\in T(M)$. We remark that if $(p,q)$ is an element of $T(M)$, then $(q,p)$ is also contained $T(M)$.

 \begin{lem}\label{critical}
Let $M$ be a pure motive of weight $w$. We assume that for all $(p,q)\in T(M)$, $p\neq q$ which is equivalent to that $p\neq \cfrac{w}{2}$.

Let $p_{1}<p_{2}<\cdots<p_{n}$ be some integers such that $$T(M)=\{(p_{1},q_{1}),(p_{2},q_{2}),\cdots, (p_{n},q_{n})\}\cup \{(q_{1},p_{1}),(q_{2},p_{2}),\cdots, (q_{n},p_{n})\}$$ where $q_{i}=w-p_{i}$ for all $1\leq i\leq n$.

We set $p_{0}=-\infty$ and $p_{n+1}=+\infty$. Denote by $k:=max\{0\leq i \leq n\mid  p_{i}< \cfrac{w}{2} \}$. We have that $m$ is critical for $M$ if and only if
$$\max(p_{k}+1,w+1-p_{k+1})\leq m\leq  \min(w-p_{k},p_{k+1}).$$
In particular, critical value always exist in the case where $p_{i}\neq q_{i}$ for all $i$.
 \end{lem}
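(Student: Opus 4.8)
The plan is to reduce the claim to Deligne's criterion for the critical values of a motive at $0$, which is stated in $(1.3.1)$ of \cite{deligne79}. Recall that an integer $m$ is critical for $M$ if and only if $0$ is critical for the Tate twist $M(m)$; since $L_\infty(s,M(m))=L_\infty(s+m,M)$ and the Hodge type of $M(m)$ is obtained from $T(M)$ by subtracting $(m,m)$, it suffices to translate Deligne's condition at $0$ into an explicit inequality on $m$. So first I would recall precisely how $L_\infty(s,M)$ is built from the Hodge data: under our hypothesis $p\neq q$ for all $(p,q)\in T(M)$, there is no $M^{p,p}$ contribution, so $L_\infty(s,M)$ is a product over the pairs $\{(p_i,q_i)\}$ (with $p_i<q_i$, i.e. $p_i<w/2$) of factors $\Gamma_{\mathbb C}(s-p_i)=2(2\pi)^{-(s-p_i)}\Gamma(s-p_i)$, up to the usual normalization. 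The poles of $\Gamma_{\mathbb C}(s-p_i)$ occur exactly at $s=p_i,p_i-1,p_i-2,\dots$, i.e. at integers $s\leq p_i$.

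Next I would write down the two pole conditions. The factor $L_\infty(M,s)$ has no pole at $s=m$ iff $m>p_i$ for every $i$ with $p_i<w/2$, equivalently $m\geq p_k+1$ where, with $p_0=-\infty$, $p_k=\max\{p_i:p_i<w/2\}$ (here I use that the $p_i$ are the small half of the Hodge numbers; those $p_i\geq w/2$ don't actually occur among the chosen representatives, but the $q_i=w-p_i>w/2$ do, and $\Gamma_{\mathbb C}(s-q_i)$ has its poles at $s\leq q_i$, so that factor also imposes $m\geq q_j+1$ only when $q_j<w/2$, which never happens — so only the $p_i<w/2$ matter). For $\check M$, the Hodge type is $\{(-p,-q)\}$, so $L_\infty(\check M,1-s)$ is a product of $\Gamma_{\mathbb C}(1-s+p_i)$ and $\Gamma_{\mathbb C}(1-s+q_i)$; the relevant poles (coming from the ''large'' exponents after the reflection, i.e. from $\Gamma_{\mathbb C}(1-s+q_i)$ with $q_i>w/2$, equivalently from the $p_{k+1},\dots,p_n$ via $q=w-p$) give the condition $1-m+q_i<1$, i.e. $m>q_i=w-p_i$ for $p_i>w/2$, together with the symmetric factors, yielding $m\leq \min(w-p_k,p_{k+1})$ and $m\geq w+1-p_{k+1}$. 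Assembling, $m$ is critical iff $\max(p_k+1,\,w+1-p_{k+1})\leq m\leq \min(w-p_k,\,p_{k+1})$.

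Finally I would check the last sentence: when $p_i\neq q_i$ for all $i$ (which is the running hypothesis), the interval is nonempty. Indeed $p_k<w/2<p_{k+1}$ forces $p_k+1\leq \lceil w/2\rceil \leq w-p_k$ and $p_k+1\leq p_{k+1}$ and $w+1-p_{k+1}\leq p_{k+1}$ and $w+1-p_{k+1}\leq w-p_k$ (this last because $p_k<p_{k+1}$ implies $p_k+1\leq p_{k+1}$, i.e. $-p_{k+1}\leq -p_k-1$, so $w+1-p_{k+1}\leq w-p_k$); hence both candidate lower bounds are $\leq$ both candidate upper bounds, so some integer $m$ lies in the interval. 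The main obstacle is purely bookkeeping: one must be careful about the normalization of the $\Gamma$-factors (whether $\Gamma_{\mathbb R}$ or $\Gamma_{\mathbb C}$ factors appear, which depends on the action of $F_\infty$ — but here since no $M^{p,p}$ occurs, only $\Gamma_{\mathbb C}$ factors appear and this subtlety disappears), and about which of the exponents $p_i$ versus $q_i$ control each of the four inequalities; keeping the indexing straight between ''small'' and ''large'' Hodge numbers under the $s\mapsto 1-s$ reflection is where an error would most easily creep in. Matching this to the character case \eqref{criticalvalue1} in Remark \ref{criticalforcharacter} is then the substitution $p_i=a_i$, $w=w(\chi)$ (with $q_i=-w-a_i$ written additively), giving exactly the stated bounds.
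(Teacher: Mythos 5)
Your route is correct in outline and arrives at the right formula, but it is lower-level than the paper's and the intermediate bookkeeping is muddled. The paper applies Deligne's criterion $(1.3.1)$ directly as an algebraic condition on the twisted Hodge exponents of $M(m)$: for each $i$, either $p_i-m\le -1$ and $q_i-m\ge 0$, or the reverse; this yields the critical set as $\bigcap_i\big([p_i+1,q_i]\cup[q_i+1,p_i]\big)$ at once, after which only elementary interval arithmetic remains. You instead rederive that criterion from the poles of the $\Gamma_{\mathbb C}$ factors of $L_\infty$ --- a valid and more explicit route, but one where the indexing must be watched. The clean statement is $L_\infty(M,s)=\prod_{i=1}^n\Gamma_{\mathbb C}\big(s-\min(p_i,q_i)\big)$, where $\min(p_i,q_i)=p_i$ for $i\le k$ and $\min(p_i,q_i)=w-p_i$ for $i>k$; the no-pole condition at $s=m$ therefore already gives \emph{both} lower bounds $m\ge p_k+1$ and $m\ge w+1-p_{k+1}$, while the no-pole condition for $L_\infty(\check M,1-s)=\prod_i\Gamma_{\mathbb C}\big(1-s+\max(p_i,q_i)\big)$ gives exactly the two upper bounds $m\le w-p_k$ and $m\le p_{k+1}$, and no lower bound. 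You instead attribute $m\ge w+1-p_{k+1}$ to the $\check M$ side, and the parenthetical claim that the $p_i\ge w/2$ ``don't actually occur among the chosen representatives'' is not the lemma's convention: the $p_i$ are allowed to lie on either side of $w/2$, which is precisely why both $p_k$ and $p_{k+1}$ appear in the answer. These slips happen to cancel, and your final interval and the non-emptiness check are both right, but the argument would read more soundly if you tracked $\min(p_i,q_i)$ and $\max(p_i,q_i)$ explicitly throughout, or simply invoked Deligne's algebraic form of the criterion as the paper does, which sidesteps the $\Gamma$-factor normalizations entirely.
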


\begin{dem}

The Hodge type of $M(m)$ is $\{(p_{i}-m,w-p_{i}-m)\mid 1\leq i\leq n\}\cup \{(w-p_{i}-m,p_{i}-m)\mid 1\leq i\leq n\}$. By Deligne's criteria, $0$ is critical for $M$ if and only if for all $i$, either $p_{i}-m\leq -1$ and $w-p_{i}-m\geq 0$, or $p_{i}-m\geq 0$ and $w-p_{i}-m\leq -1$. Hence the set of critical values for $M$ are  $\bigcap\limits_{1\leq i\leq n} ([w+1-p_{i}, p_{i}]\cup [p_{i}+1,w-p_{i}])$.

For $i\leq k$, $p_{i}<\cfrac{w}{2}$ and then $p_{i}<w+1-p_{i}$. Therefore $\bigcap\limits_{1\leq i\leq k} ([w+1-p_{i}, p_{i}]\cup [p_{i}+1,w-p_{i}])=\bigcap\limits_{1\leq i\leq k}  [p_{i}+1,w-p_{i}]=[p_{k}+1,w-p_{k}]$. The same we have $\bigcap\limits_{k< i\leq n} ([w+1-p_{i}, p_{i}]\cup [p_{i}+1,w-p_{i}])=\bigcap\limits_{k< i\leq n} [w+1-p_{i}, p_{i}]=[w+1-p_{k+1},p_{k+1}]$.

We deduce at last the set of critical values for $M$ is $[\max(p_{k}+1,w+1-p_{k+1}), \min(w-p_{k},p_{k+1})]$. It is easy to verify the latter set is non empty.
\end{dem}

\begin{df}
Let $M$ be a pure motive of weight $w$. We say $M$ is \textbf{polarized} if there exists a morphism of motive $$<,>:M\otimes M\rightarrow \Q(-w)$$
which is a non-degenerate bilinear form in any realization.

We assume this bilinear form is symmetric if $w$ is even and skew-symmetric if $w$ is odd.
\end{df}

\end{subsection}
\begin{subsection}{Motives and periods associated to automorphic representations}\label{automorphicperiod}
\text{}

Let $n\geq 1$ be an integer, $K$ be a quadratic imaginary field and $\Pi=\Pi_{f}\otimes \Pi_{\infty}$ be a regular cohomological cuspidal representation of $GL_{n}(\AK)$. We say a representation is \textbf{regular} if its infinity type $(z^{a_{i}}\overline{z}^{b_{i}})_{1\leq i\leq n}$ satisfies $a_{i}\neq a_{j}$ for all $1\leq i<j\leq n$. By Lemma \ref{wPi} below, this implies $b_{i}\neq b_{j}$ for all $1\leq i<j\leq n$. We recall that $\Pi$ is \textbf{cohomological} if there exists $W$ an algebraic finite-dimensional representation of $GL_{n}(\AK)$ such that $H^{*}(\mathcal{G}_{\infty},K_{\infty}; \Pi\otimes W)\neq 0$ where $\mathcal{G}_{\infty}=Lie(GL_{n})(\C)$ and $K_{\infty}$ is the compact unitary group of $GL_{n}(\C)$. In particular, $\Pi$ is \textbf{algebraic}, i.e. $a_{i}$, $b_{i}\in \Z+\frac{n-1}{2}$ for all $i$ (for more details on the infinity type we refer to \cite{clozelaa} section $3.3$).

We denote $V$ the representation space for $\Pi_{f}$. For $\sigma\in Aut(\C)$, we define another $GL_{n}(\AK\text{}_{f})$-representation $\Pi_{f}^{\sigma}$ to be $V\otimes_{\C,\sigma}\C$. Let $\Q(\Pi)$ be the subfield of $\C$ fixed by $\{\sigma\in Aut(\C) \mid  \Pi_{f}^{\sigma} \cong \Pi_{f}\}$. We call it the \textbf{rationality field} of $\Pi$.

For $E$ a number field, $G$ a group and $V$ a $G$-representation over $\C$, we say $V$ has a \textbf{$E$-rational structure} if there exists a $E$-vector space $V_{E}$ endowed with an action of $G$ such that $V=V_{E}\otimes_{E}\C$ as a representation of $G$. We call $V_{E}$ a $E$-rational structure of $V$. We have the following result (c.f. \cite{clozelaa} Theorem $3.13$):

\begin{thm}
For $\Pi$ a regular algebraic cuspidal representation of $GL_{n}(\AK)$, $\Q(\Pi)$ is a number field. Moreover, $\Pi_{f}$ has a $\Q(\Pi)$-rational structure. For all $\sigma\in Aut(\C)$, $\Pi_{f}^{\sigma}$ is the finite part of a cuspidal representation of $GL_{n}(\AK)$ which is unique by the strong multiplicity one theorem, denoted by $\Pi^{\sigma}$.
\end{thm}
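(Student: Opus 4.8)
The plan is to reduce this statement to the theory of $(\mathfrak{g}_\infty,K_\infty)$-cohomology of the cuspidal spectrum together with the model-theoretic machinery of Clozel's article \cite{clozelaa}. First I would recall that since $\Pi$ is regular, algebraic and cohomological, it contributes to the cuspidal cohomology $H^{*}(\mathfrak{g}_\infty,K_\infty;\Pi\otimes W)$ for a unique (up to isomorphism) algebraic representation $W$, and that $W$ is defined over a number field; the regularity hypothesis guarantees that this contribution lies in a single degree and that the $K_\infty$-types involved are multiplicity one. The key input is that the action of $\mathrm{Aut}(\C)$ on the coherent/Betti cohomology of the locally symmetric space attached to $GL_n/K$ (or, equivalently, on the finite-dimensional spaces $H^{*}(S_{K_f},\widetilde{W})$ as $K_f$ ranges over open compact subgroups) is defined over $\Q$, so that $\sigma\in\mathrm{Aut}(\C)$ carries an automorphic representation $\Pi$ occurring in cuspidal cohomology for $W$ to one occurring in cuspidal cohomology for $W^{\sigma}$.

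The main steps, in order, would be: (1) Show that $\Pi_f^{\sigma} := V\otimes_{\C,\sigma}\C$ is again the finite part of an automorphic representation. This follows because $\Pi_f$ appears in the cohomology of a $\Q$-rational local system, hence $\Pi_f^{\sigma}$ appears in the $\sigma$-twist, which is again the cohomology of a $\Q$-rational local system (twist of $W$ by $\sigma$); cuspidality is preserved because the boundary/Eisenstein part of cohomology is also Galois-stable, so $\sigma$ permutes the cuspidal summands. (2) Deduce that $\Pi_f^{\sigma}$ is the finite part of a cuspidal $\Pi^\sigma$ on $GL_n(\AK)$, and that it is \emph{unique}: this is exactly the strong multiplicity one theorem for $GL_n$, since a cuspidal representation of $GL_n(\AK)$ is determined by its finite part (indeed by almost all its local components). (3) Prove that $\Q(\Pi)$ is a number field, i.e.\ that the stabilizer $\{\sigma\in\mathrm{Aut}(\C): \Pi_f^{\sigma}\cong\Pi_f\}$ has finite index in $\mathrm{Aut}(\C)$. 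Here one argues that there are only finitely many cuspidal cohomological representations of a given infinity type and level, so the orbit of $\Pi_f$ under $\mathrm{Aut}(\C)$ is finite; the stabilizer of a point in a finite set acted on by $\mathrm{Aut}(\C)$ is an open subgroup, whose fixed field is therefore a number field. (4) Finally, descend the rational structure: since $\Pi_f$ generates a one-dimensional piece of $\sigma$-equivariant cohomology over $\Q(\Pi)$ and all $\sigma$ fixing $\Q(\Pi)$ fix $\Pi_f$ up to isomorphism, a standard Galois-descent argument (Speh's criterion / the lemma that an $\mathrm{Aut}(\C/E)$-semilinearly-stable complex representation with a compatible system of isomorphisms descends to $E$) produces the $\Q(\Pi)$-rational structure $V_{\Q(\Pi)}$ on $\Pi_f$.

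The step I expect to be the main obstacle is step (4), the actual construction of the $\Q(\Pi)$-rational structure on $\Pi_f$ as an \emph{abstract} $GL_n(\AK_{,f})$-representation (as opposed to merely knowing the isomorphism class is fixed). Knowing that $\Pi_f^{\sigma}\cong\Pi_f$ for all $\sigma$ fixing $\Q(\Pi)$ gives a family of isomorphisms that one must rigidify into a genuine descent datum; the subtlety is the choice of a coherent system of such isomorphisms, which Clozel handles by using the cohomological realization — the $E$-rational structure comes from the singular cohomology $H^{q}_{!}(S,\widetilde{W}_E)$ with coefficients in the $E$-form of $W$, together with the fact that $\Pi_f$ occupies an isotypic, hence canonically defined, summand. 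Everything else is bookkeeping: matching infinity types (using Lemma~\ref{wPi} for the relation between the $a_i$ and $b_i$), checking that regularity forces the relevant cohomology into a single degree, and invoking strong multiplicity one. I would therefore spend the bulk of the write-up on step~(4), citing \cite{clozelaa} Theorem~3.13 for the precise descent, and keep steps (1)--(3) brief.
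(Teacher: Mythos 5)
This theorem is not proved in the paper at all: the text simply records it as a quotation of Theorem~3.13 of Clozel's article \cite{clozelaa}, so there is no in-paper proof to compare against. Your sketch is, essentially, a correct summary of the argument in the cited reference: realize $\Pi_f$ in the cuspidal part of $H^{*}(S_{K_f},\widetilde{W})$ for a $\Q$-rational coefficient system, use the $\mathrm{Aut}(\C)$-action there to produce $\Pi_f^{\sigma}$ and check cuspidality is preserved, invoke strong multiplicity one for uniqueness of $\Pi^{\sigma}$, deduce finiteness of the $\mathrm{Aut}(\C)$-orbit (hence that $\Q(\Pi)$ is a number field) from finiteness of the set of cuspidal representations of a fixed level and infinitesimal character, and finally extract the $\Q(\Pi)$-rational structure by descent from the rational structure on the cohomological realization. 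You correctly identify the descent as the genuine technical crux and correctly point to the fact that $\Pi_f$ sits in a canonically defined isotypic summand of $H^{q}_{!}$ as the mechanism that furnishes a coherent descent datum.

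One small inaccuracy worth flagging: you assert that regularity forces the $(\mathfrak{g}_\infty,K_\infty)$-cohomology contribution of $\Pi$ into a single degree. For $GL_n$ over a CM field this is not the case; regularity only guarantees that $\Pi_\infty$ is tempered and that its cohomology is concentrated in the band of degrees $[q_0, q_0 + l_0]$ with $l_0>0$ in general, with the usual $\binom{l_0}{q-q_0}$ multiplicities coming from Delorme's lemma. What the argument actually needs is only the nonvanishing of cuspidal cohomology and the fact that the $\Pi_f$-isotypic summand is canonically cut out; neither requires concentration in one degree. This does not affect the soundness of your overall plan, but the sentence as written overstates the consequence of regularity.
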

\begin{rem}
\begin{enumerate}
\item This theorem is still true if $K$ is replaced by arbitrary number field (see \textit{loc.cit}).
\item For any $\sigma\in Aut(\C)$, $\Pi^{\sigma}$ is determined by $\sigma|_{\Q(\Pi)}: \Q(\Pi)\hookrightarrow \C$. Therefore, we may define $\Pi^{\sigma}$ for any $\sigma\in \Sigma_{Q(\Pi)}$ by lifting $\sigma$ to an element in $Aut(\C)$. In particular, we may define $\Pi^{\sigma}$ for any $\sigma\in Gal(\overline{\Q}/\Q)$ or $\sigma\in \Sigma_{E}$ where $E$ is an extension of $\Q(\Pi)$.
\end{enumerate}
\end{rem}

It is conjectured that such a $\Pi$ is attached to a motive with coefficients in a finite extension of $\Q(\Pi)$:
\begin{conj}\label{conjmotive}(Conjecture $4.5$ and paragraph $4.3.3$ of \cite{clozelaa})

Let $\Pi$ be a regular algebraic cuspidal representation of $GL_{n}(\AK)$ and $\Q(\Pi)$ its rationality field. There exists $E$ a finite extension of $\Q(\Pi)$ and $M$ a regular motive of rank $2n$ over $\Q$ with coefficients in $E$ such that
$$L(s,M,\sigma)=L(s+\cfrac{1-n}{2},\Pi^{\sigma})$$
for all $\sigma: E\hookrightarrow \C$.

Moreover, if the infinity type of $\Pi$ is $(z^{a_{i}}\overline{z}^{b_{i}})_{1\leq i\leq n}$, then the Hodge type of $M$ is $\{(-a_{i}+\cfrac{n-1}{2},-b_{i}+\cfrac{n-1}{2})\mid 1\leq i\leq n\}\cup \{(-b_{i}+\cfrac{n-1}{2},-a_{i}+\cfrac{n-1}{2})\mid 1\leq i \leq n\}$.
\end{conj}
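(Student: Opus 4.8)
The plan is to treat Conjecture \ref{conjmotive} on two levels, since a proof in full generality is beyond current technology. On the general level, the strategy is the standard one: first attach to $\Pi$ a compatible system of $\ell$-adic Galois representations $\rho_{\ell}(\Pi)$ of $G_{K}$ with the expected local--global compatibility — available when $\Pi$ is essentially conjugate self-dual through the cohomology of unitary Shimura varieties in the spirit of \cite{harristaylor}, and in general only by $p$-adic approximation — and then upgrade this to a motive for absolute Hodge cycles by realizing $\rho_{\ell}(\Pi)$, together with its conjugate, inside the middle-degree cohomology of a Shimura variety $Sh$ attached to a similitude unitary group $GU(r,s)$ as in Section 2, choosing a cuspidal $\pi$ on $GU(r,s)(\AQ)$ whose base change is $\Pi\otimes\xi$ (Theorem \ref{bcsimilitude} and Lemma \ref{lemmaxi}); one then cuts out the $\pi$-isotypic piece by Hecke correspondences, which are algebraic and hence motivic, and descends to $\Q$ by (motivic) restriction of scalars. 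The rank of the resulting $M$ is $2n$, with a factor $2$ coming from $\mathrm{Res}_{K/\Q}$ and a factor $n$ from $GL_{n}$. The $L$-function identity then follows by matching Satake parameters at the unramified places and invoking strong multiplicity one, while the Hodge type is read off the infinitesimal character of $\Pi_{\infty}$ — the Betti Hodge numbers from the archimedean $(\mathfrak{g},K_{\infty})$-cohomology and the Hodge--Tate weights from $p$-adic Hodge theory — and the coefficient field $E$ is a finite extension of $\Q(\Pi)$ absorbing the field of definition of the Hecke projector and the splitting field of $Sh$.

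The genuine obstacle is showing that the class one extracts from cohomology is not merely a Galois-stable summand but a bona fide motive, with Betti/de Rham/$\ell$-adic comparison isomorphisms rational over $E$; worse, when $\Pi$ is neither essentially conjugate self-dual nor of CM type, $\rho_{\ell}(\Pi)$ is at present produced only as a $p$-adic limit and admits no geometric, let alone motivic, incarnation. A secondary point is the regularity $\dim M^{p,q}\le 1$, which should follow from the regularity hypothesis on $\Pi$ via the explicit shape of the archimedean cohomology.

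On the level actually needed in this paper, however, the conjecture is a theorem. When $\Pi=\Pi(\chi)$ is the cyclic automorphic induction of an algebraic Hecke character $\chi$ of the CM field $F$ (cyclic of degree $n$ over $K$), one takes for $M$ the CM motive
$$M \;:=\; \big(\mathrm{Res}_{F/\Q}\,M(\chi)\big)\otimes T,$$
where $M(\chi)$ is Deligne's rank one motive over $F$ with coefficients in $E(\chi)$ attached to $\chi$ (cut out by a CM projector from the cohomology of an abelian variety with complex multiplication), $\mathrm{Res}_{F/\Q}$ denotes restriction of scalars of motives — the motivic incarnation of $\mathrm{Ind}_{G_{F}}^{G_{\Q}}$, so that the $\ell$-adic realization of $\mathrm{Res}_{F/\Q}M(\chi)$ is $\mathrm{Ind}_{G_{F}}^{G_{\Q}}\rho_{\ell}(\chi)$, which is precisely the Galois parameter of $\Pi(\chi)$ — and $T$ is the Tate twist $\Q(\tfrac{1-n}{2})$ when $n$ is odd (so that $\tfrac{1-n}{2}\in\Z$), to be replaced by a twist built from the rank one CM motive of the auxiliary character $\psi$ of Lemma \ref{psi} when $n$ is even. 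Then the rank is $[F:\Q]=2n$; the $L$-function identity reduces to $L(s,\mathrm{Res}_{F/\Q}M(\chi),\sigma)=L(s,\chi^{\sigma})$ together with $L(s,\Pi(\chi))=L(s,\chi)$ (functoriality of automorphic induction), the shift $\tfrac{1-n}{2}$ being exactly the discrepancy between the motivic and automorphic normalizations absorbed by $T$; and the Hodge type of $\mathrm{Res}_{F/\Q}M(\chi)$ is $\{(-a_{i},-b_{i})\}\cup\{(-b_{i},-a_{i})\}$ (reading off the infinity type $\chi_{\infty}(z)=\prod_{i}\sigma_{i}(z)^{a_{i}}\bar\sigma_{i}(z)^{b_{i}}$ over the $2n$ embeddings of $F$), which after the twist by $T$ becomes $\{(-a_{i}+\tfrac{n-1}{2},-b_{i}+\tfrac{n-1}{2})\}\cup\{(-b_{i}+\tfrac{n-1}{2},-a_{i}+\tfrac{n-1}{2})\}$, as claimed. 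Regularity of $M$ is then equivalent to criticality of $\chi$ (the condition $a_{i}\ne b_{i}$ for all $i$), and for the constructions of Section \ref{automorphicperiod} and the applications this explicit $M$ may be used in place of the general Conjecture \ref{conjmotive}.
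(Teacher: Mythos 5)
This statement is labeled a \emph{Conjecture} in the paper (attributed to Clozel's Conjecture~4.5 and \S4.3.3), and the paper offers no proof of it; there is therefore no ``paper's own proof'' to compare against. You correctly recognize this, and the bulk of your text is a discussion rather than a proof, so no gap can be charged against you for not settling an open problem. A few remarks are nonetheless worth making.

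Your first paragraph reproduces the standard Shimura-variety strategy, and this is in fact the route the paper takes later, in Section~\ref{automorphicperiod}, when it needs an actual motive: for $\Pi$ regular, cohomological, conjugate self-dual, and supercuspidal above $q$, the paper cites Harris's construction (Proposition~2.7.10 of \cite{harris97}) of $M(\Pi)$ from $\overline{H}^{rs}(Sh(GU(r,s)),\cdot)[\pi_f]$, exactly as you describe. So for the range the paper actually needs, your ``general level'' sketch matches the paper's source.

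Your second construction, taking $M = (\mathrm{Res}_{F/\Q}M(\chi))\otimes T$ with $T$ a Tate twist (or a twist by the CM motive of $\psi$ when $n$ is even), is the natural CM incarnation for $\Pi=\Pi(\chi)$, and it is consistent with the paper: the remark at the end of Section~\ref{evendim} records precisely that $M(\Pi)(\tfrac{n}{2}) = M(\chi)\otimes M(\psi)$. The $L$-function identity follows from $L(s,\Pi(\chi))=L(s,\chi)$, and the Hodge type bookkeeping is right for $n$ odd (where $\tfrac{1-n}{2}\in\Z$); for $n$ even you gesture at but do not write out the twist by $\psi$, which is the only place your sketch is materially incomplete. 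Note, however, that this CM construction and Harris's Shimura-variety construction are not obviously \emph{the same} motive — they share $\ell$-adic and Hodge realizations, but identifying them as objects of the motivic category would require a comparison you do not supply (and the paper does not attempt either; it works throughout with the Shimura-variety motive and uses CM periods only through $L$-value comparisons). If you intend ``$M$ may be used in place of the general Conjecture'' to mean interchangeably with Harris's $M(\Pi)$, that is an extra claim requiring justification. Finally, your closing sentence that regularity of $M$ is \emph{equivalent} to criticality of $\chi$ ($a_i\ne b_i$) is off: regularity of $M$ (all $\dim M^{p,q}\le 1$) corresponds to regularity of $\chi$ in the sense $a_i\ne a_j$ for $i\ne j$; criticality ($a_i\ne b_i$) is the separate condition excluding $(p,p)$-pieces, needed for critical values to exist, not for regularity.
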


\begin{lem}(Lemma $4.9$ of \cite{clozelaa})\label{wPi}

Let $\Pi$ be a cuspidal algebraic representation of $GL_{n}(\AK)$. We have that $\Pi$ is \textbf{pure} which means there exists an integer $w_{\Pi}$ such that $(z^{a_{i}}\overline{z}^{b_{i}})_{1\leq i\leq n}$, the infinity type of $\Pi$, satisfies $a_{i}+b_{i}=-w_{\Pi}$ for all $i$.
\end{lem}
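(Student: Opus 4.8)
The plan is to reduce the purity of $\Pi$ to the classification of the generic unitary dual of $GL_{n}(\C)$, using the algebraicity hypothesis only through an integrality constraint on infinity-type exponents. First I would pass to the unitary normalization: there is a unique real number $t$ such that $\Pi^{u}:=\Pi\otimes|\det|_{\AK}^{-t}$ has unitary central character, and being cuspidal it is then realized in $L^{2}_{\mathrm{cusp}}$, hence is a unitary automorphic representation. Twisting by $|\det|_{\AK}^{-t}$ subtracts the same constant $t$ from every exponent of the infinity type, so if $(z^{a_{i}^{u}}\overline{z}^{b_{i}^{u}})_{1\le i\le n}$ denotes the infinity type of $\Pi^{u}$, then $a_{i}+b_{i}=(a_{i}^{u}+b_{i}^{u})+2t$, $a_{i}-a_{j}=a_{i}^{u}-a_{j}^{u}$ and $b_{i}-b_{j}=b_{i}^{u}-b_{j}^{u}$ for all $i,j$. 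Thus it suffices to prove that $a_{i}^{u}+b_{i}^{u}$ is independent of $i$. Moreover, since $\Pi$ is algebraic, $a_{i},b_{i}\in\Z+\tfrac{n-1}{2}$, so $a_{i}-a_{j}\in\Z$ and $b_{i}-b_{j}\in\Z$, hence also $a_{i}^{u}-a_{j}^{u}\in\Z$ and $b_{i}^{u}-b_{j}^{u}\in\Z$ for all $i,j$; and $a_{i}^{u},b_{i}^{u}$ are real.

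Next I would analyse $\Pi^{u}_{\infty}$. Being cuspidal on $GL_{n}$, $\Pi^{u}$ is globally generic (Shalika), so $\Pi^{u}_{\infty}$ is a generic unitary representation of $GL_{n}(\C)$. By the classification of the unitary dual (Vogan, Tadi\'c), such a representation is an irreducible parabolic induction of a string of unitary characters of $\C^{\times}$ together with $GL_{2}(\C)$-complementary series blocks $\mu_{j}|\det|_{\C}^{s_{j}}\times\mu_{j}|\det|_{\C}^{-s_{j}}$ with $\mu_{j}$ unitary and $0<s_{j}<\tfrac12$. Writing the Langlands parameter of $\Pi^{u}_{\infty}$ as $\bigoplus_{i}\bigl(z\mapsto z^{a_{i}^{u}}\overline{z}^{b_{i}^{u}}\bigr)$, a unitary character contributes a summand with $a_{i}^{u}+b_{i}^{u}\in i\R$, while a complementary series block contributes two summands whose $z$-exponents differ by $2s_{j}$ (and whose $\overline{z}$-exponents likewise differ by $2s_{j}$), because $|\cdot|_{\C}$ corresponds on $W_{\C}=\C^{\times}$ to $z\mapsto z\overline{z}$.

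This is where the integrality constraint bites, and I expect this to be the only genuinely nontrivial point. Since $2s_{j}\in(0,1)$ is not an integer, a complementary series block would produce two exponents $a_{i}^{u},a_{j}^{u}$ with $a_{i}^{u}-a_{j}^{u}=2s_{j}\notin\Z$, contradicting what we established above. Hence no complementary series block occurs, so $\Pi^{u}_{\infty}$ is an irreducible unitary principal series, i.e. tempered, and every summand of its parameter is a unitary character; combined with $a_{i}^{u},b_{i}^{u}\in\R$ this forces $a_{i}^{u}+b_{i}^{u}\in\R\cap i\R=\{0\}$ for all $i$. Feeding this back, $a_{i}+b_{i}=2t$ is independent of $i$; it lies in $\Z$ because $a_{i}+b_{i}\in\Z+(n-1)$, and we set $w_{\Pi}:=-2t$. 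The essential input is thus the structure of the generic unitary dual of $GL_{n}(\C)$ (equivalently, that a generic unitary representation with integral infinitesimal character is tempered); the reduction to the unitary twist and the manipulation of infinity types are routine. This is the argument of Lemma $4.9$ of \cite{clozelaa}, to which one may also simply refer.
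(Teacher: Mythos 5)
The paper gives no proof of this lemma, citing Lemma~$4.9$ of \cite{clozelaa} instead, and your argument is precisely Clozel's: reduce to the unitary twist $\Pi^{u}$, use Shalika's theorem that cuspidal representations of $GL_{n}$ are globally generic, and apply the classification of the generic unitary dual of $GL_{n}(\C)$ together with the integrality of differences of infinity-type exponents coming from algebraicity to exclude every complementary series block. The details are correct, including the decisive observation that a block $\nu|\cdot|_{\C}^{s}\times\nu|\cdot|_{\C}^{-s}$ with $0<s<\tfrac12$ would force $2s\in\Z\cap(0,1)=\varnothing$, so $\Pi^{u}_{\infty}$ is tempered and $a_{i}^{u}+b_{i}^{u}=0$ for all $i$.
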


\begin{cor}
If the motive associated to $\Pi$, a cuspidal algebraic regular representation of $GL_{n}(\AK)$, exists as in Conjecture \ref{conjmotive}, then it is pure of weight $w_{\Pi}+n-1$.
\end{cor}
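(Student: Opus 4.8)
The plan is to read off the weight of $M$ directly from the Hodge type supplied by Conjecture \ref{conjmotive}, combined with the purity of $\Pi$ recorded in Lemma \ref{wPi}. Write $(z^{a_{i}}\overline{z}^{b_{i}})_{1\leq i\leq n}$ for the infinity type of $\Pi$. By Lemma \ref{wPi} there is an integer $w_{\Pi}$ with $a_{i}+b_{i}=-w_{\Pi}$ for every $i$, so the only thing left to do is an arithmetic identity.

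First I would recall that, by Conjecture \ref{conjmotive}, the Hodge type of $M$ is
$$T(M)=\{(-a_{i}+\tfrac{n-1}{2},-b_{i}+\tfrac{n-1}{2})\mid 1\leq i\leq n\}\cup\{(-b_{i}+\tfrac{n-1}{2},-a_{i}+\tfrac{n-1}{2})\mid 1\leq i\leq n\}.$$
Then for any pair $(p,q)\in T(M)$ — whether of the first or of the second form — one computes $p+q=-a_{i}-b_{i}+(n-1)=w_{\Pi}+n-1$, using $a_{i}+b_{i}=-w_{\Pi}$. Hence $M^{p,q}=0$ whenever $p+q\neq w_{\Pi}+n-1$, which is exactly the statement that $M$ is pure of weight $w_{\Pi}+n-1$. (Note that this is consistent with the blanket assumption that all motives here are pure: the Hodge type formula already forces $T(M)$ to lie on the single anti-diagonal $p+q=w_{\Pi}+n-1$.)

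There is essentially no obstacle here; the corollary is a bookkeeping consequence of its two inputs, and the only care needed is to keep track of the motivic normalization shift by $\tfrac{n-1}{2}$ — which is precisely what turns the half-integers $a_{i},b_{i}\in\Z+\tfrac{n-1}{2}$ into the integer Hodge numbers of a genuine motive. As a further consistency check one may observe that the weight $w_{\Pi}+n-1$ is compatible with the normalization $L(s,M,\sigma)=L(s+\tfrac{1-n}{2},\Pi^{\sigma})$ and with the fact that $M$ has rank $2n$, the $2n$ Hodge lines organizing themselves into the $n$ conjugate pairs $M^{p,q}\oplus M^{q,p}$ on that anti-diagonal.
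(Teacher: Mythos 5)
Your proof is correct and is exactly the computation the corollary is meant to record: the paper gives no separate proof because the statement follows immediately by adding the two entries of each Hodge pair from Conjecture \ref{conjmotive} and invoking $a_i+b_i=-w_\Pi$ from Lemma \ref{wPi}. Your bookkeeping, including the remark about the half-integer shift by $\tfrac{n-1}{2}$, matches the intended argument.
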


 \bigskip

In the following, let $\Pi$ be a regular algebraic cuspidal representation of $GL_{n}(\AK)$ which is moreover cohomological and conjugate self-dual. M. Harris has shown that there exits a motive associated to $\Pi$ (c.f. \cite{harris97}).  Moreover, he defined automorphic periods which are analogues of Deligne's periods. He also proved  that special values of $L$-function for $\Pi$ can be interpreted in terms of these automorphic periods (c.f. Theorem \ref{maintheorem}). We now introduce his results.

In this article, we always assume $\Pi$ is supercuspidal at places over $q$. This is not necessary to define the motive associated to $\Pi$ but will be useful to define the period $P^{(s)}$ for all $s$. We shall see it later (c.f. remark \ref{supercuspidal}).

In order to simplify the notation in Theorem \ref{maintheorem} (see Equation (\ref{mainformula})), we consider $\Pi^{\vee}$, the contragredient representation of $\Pi$. It is regular, cuspidal, cohomological, conjugate self-dual and supercuspidal at places over $q$. By Lemma \ref{lemmaxi}, for each $(r,s)$, we can find $\xi_{r,s}$, a Hecke character of $K$ such that $\Pi^{\vee}\otimes \xi_{r,s}$ satisfies the conditions in Theorem \ref{bcsimilitude}. Therefore, $\Pi^{\vee}\otimes \xi_{r,s}$ descends to a cuspidal cohomological representation of $GU(r,s)(\AQ)$, denoted by $\pi$. We write $W(\pi_{\infty})$ the finite dimensional representation associated to $\pi$ by the cohomological property.

We define  $X_{r,s}$ the $GU(r,s)(\R)$ conjugate class of  $$\begin{array}{rcr} h_{r,s}: \mathbb{S}(\C)=\C^{\times}\times \C^{\times}&\rightarrow &GU(r,s)(\C)\cong GL_{n}(\C)\times \C^{\times}\\
(z,\overline{z})&\mapsto & (\begin{pmatrix}
zI_{r} & 0\\
0 & \overline{z}I_{s}
\end{pmatrix},
z\overline{z})
\end{array}.$$

  We know that $(GU(r,s),X_{r,s})$ is a Shimura datum and we denote $Sh(GU(r,s))$ the Shimura variety associated to this Shimura datum. The finite dimensional representation $W(\pi_{\infty})$ defines a complex local system $\mathcal{W}(\pi_{\infty})$ and $l$-adic local system $\mathcal{W}(\pi_{\infty})_{l}$ on $Sh(GU(r,s))$. 
  
As shown in  \cite{harris97}, the cohomology group $\overline{H}^{rs}(Sh(GU(r,s)), \mathcal{W}(\pi_{\infty}))$ defined in section $2.2$ of \cite{harris97} is naturally endowed with a De Rham rational structure and a Betti rational structure over $K$ (c.f. Proposition $2.2.7$ of \textit{loc.cit}). The cohomology group $\overline{H}^{rs}(Sh(GU(r,s)), \mathcal{W}(\pi_{\infty})_{l})$ is endowed with an $l$-adic structure. Moreover, $\pi_{f}$ contributes non trivially to these cohomology groups, i.e.  $\overline{H}^{rs}(Sh(GU(r,s)),*)[\pi_{f}]:=Hom_{G(\AK\text{}_{f})}(\pi_{f},\overline{H}^{rs}(Sh(GU(r,s)),*)\neq 0$ for $*= \mathcal{W}(\pi_{\infty})$ or  $\mathcal{W}(\pi_{\infty})_{l}$.

One direct consequence is that the rationality field of $\pi_{f}$ is a number field (see section $2.6$ of \cite{harris97}). One can then realize $\pi_{f}$ over $E(\pi)$, a finite extension of its rationality field, which is still a number field. For each $(r,s)$, we fix a $\xi_{r,s}$ and a $\pi$ descended from $\Pi\otimes \xi_{r,s}$. We take $E(\Pi)$ a number field contains the $E(\pi)$ for all $(r,s)$. We assume also that $E(\Pi)$ contains $K$.

\medskip

One can show that $\overline{H}^{rs}(Sh(GU(r,s)),*)[\pi_{f}]$ for $*= \mathcal{W}(\pi_{\infty})$ or  $\mathcal{W}(\pi_{\infty})_{l}$ form a motive with coefficients in $E(\Pi)$ (c.f. Proposition $2.7.10$ of \textit{loc.cit}). This is a motive associated to $\Pi$ as in Conjecture \ref{conjmotive}, denoted by $M(\Pi)$. Since $\Pi$ is conjugate self-dual, we have $w_{\Pi}=0$ (c.f. Lemma \ref{wPi}) and then $M(\Pi)$ is pure of weight $n-1$. Also due to the conjugate self-dual property, $M(\Pi)$ is polarized. In particular, there exists $<.>$ a non degenerate bilinear form on $\overline{H}^{rs}(Sh(GU(r,s)),\mathcal{W}(\pi_{\infty}))[\pi_{f}]$ normalized as in section $2.6.8$ of \cite{harris97}.

\bigskip

With the help of coherent cohomology, we can furthermore decompose\\ $\overline{H}^{rs}(Sh(GU(r,s)),\mathcal{W}(\pi_{\infty}))[\pi_{f}]$. Let $Sh(GU(r,s))\hookrightarrow \widetilde{Sh}(GU(r,s))$ be a smooth toroidal compactification. Let $\pi'_{\infty}$ be any discrete series representation of $GU(r,s)(\R)$ with base change $\Pi^{\vee}_{\infty}$. It is then cohomological and we denote $\widetilde{\mathcal{E}}(\pi'_{\infty})$ the coherent automorphic vector bundle attached to the finite dimensional representation associated to $\pi'_{\infty}$. We have $$\overline{H}^{rs}(Sh(GU(r,s)),\mathcal{W}(\pi_{\infty}))[\pi_{f}]=\bigoplus\limits_{\pi_{\infty}'}\widetilde{H}^{q(\pi_{\infty}')}(\widetilde{Sh}(GU(r,s)),\widetilde{\mathcal{E}}(\pi'_{\infty}))[\pi_{f}]$$ where $\widetilde{H}$ indicates the coherent cohomology and $q(\pi_{\infty}')$ is an integer depends on $\pi_{\infty}'$. Among these $\pi_{\infty}'$, there exists a holomorphic representation $\pi'_{\infty}$ such that $q(\pi_{\infty}')=0$. We fix this $\pi_{\infty}'$ and choose a $K$-rational element $0\neq \beta\in \widetilde{H}^{q(\pi_{\infty}')=0}(\widetilde{Sh}(GU(r,s)),\widetilde{\mathcal{E}}(\pi'_{\infty}))[\pi_{f}]$.
At last,  we write $\xi_{r,s,\infty}(t)=t^{C_{r,s}}$ with $C_{r,s}\in \Z$. We define the \textbf{automorphic period} of $(\Pi,\xi_{r,s})$ by:
$$P^{(s)}(\Pi,\xi_{r,s}):=(2\pi)^{-C_{r,s}}<\beta,\beta>.$$ It is a non zero complex number. By the following proposition, we see that $P^{(s)}(\Pi,\xi_{r,s})$ does not depend on the choice of $\beta$ modulo $E(\pi)^{\times}$ and thus well defined modulo $E(\pi)^{\times}$.
\begin{prop}(Proposition $3.19$ in \cite{harrisimrn})

Let $\beta'\in \widetilde{H}^{q(\pi_{\infty}')=0}(\widetilde{Sh}(GU(r,s)),\widetilde{\mathcal{E}}(\pi'_{\infty}))[\pi_{f}]$ be another $K$-rational element. We have $\cfrac{<\beta,\beta'>}{<\beta,\beta>}\in E(\pi)$. 

Consequently, $<\beta,\beta>\sim_{E(\pi)}<\beta',\beta'>$.
\end{prop}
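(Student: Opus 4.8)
The plan is to reduce the statement to linear algebra: I will show that the space $\widetilde{H}^{q(\pi'_{\infty})=0}(\widetilde{Sh}(GU(r,s)),\widetilde{\mathcal{E}}(\pi'_{\infty}))[\pi_{f}]$ is one-dimensional over $\C$, that it carries a one-dimensional rational structure over $E(\pi)$, and that $\langle\,,\,\rangle$ is $\C$-bilinear; the conclusion is then immediate.

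\emph{One-dimensionality.} Expressing the coherent cohomology of $\widetilde{Sh}(GU(r,s))$ with coefficients in $\widetilde{\mathcal{E}}(\pi'_{\infty})$ through relative Lie algebra cohomology, its $\pi_{f}$-isotypic part in degree $0$ is a direct sum of $m(\pi')$ copies of the degree-$0$ coherent $(\mathfrak{g},K_{\infty})$-cohomology of $\pi'_{\infty}$ (which is one-dimensional), the sum running over the cuspidal automorphic representations $\pi'$ of $GU(r,s)(\AQ)$ with $\pi'_{f}\cong\pi_{f}$ and archimedean component the fixed holomorphic discrete series $\pi'_{\infty}$. Such a $\pi'$ is unique (its base change is the fixed representation $\Pi^{\vee}$, so $\pi'$ is pinned down by strong multiplicity one for $GL_{n}(\AK)$ and the uniqueness of its descent) and occurs with multiplicity one; hence the space is exactly a $\C$-line. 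Equivalently, under the comparison between coherent and de Rham cohomology this line is the bottom step of the Hodge filtration of $M(\Pi)$, i.e. a single Hodge component $M^{p_{0},q_{0}}(\Pi)$, which is one-dimensional because $\Pi$, hence $M(\Pi)$, is regular, and nonzero because $\beta\neq 0$.

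\emph{Rationality and conclusion.} The compactified Shimura variety, the coherent automorphic vector bundle $\widetilde{\mathcal{E}}(\pi'_{\infty})$ and the Hecke operators are all defined over a number field contained in $E(\pi)$, and $\pi_{f}$ is realized over $E(\pi)$; so the $\pi_{f}$-isotypic subspace, being cut out by an $E(\pi)$-rational idempotent built from Hecke operators, inherits an $E(\pi)$-rational structure, which by the previous paragraph is one-dimensional over $E(\pi)$. The $K$-rational elements $\beta$ and $\beta'$ both lie in it (since $K\subseteq E(\pi)$ and the ambient $K$-rational structure is compatible with this $E(\pi)$-structure), so $\beta'=c\,\beta$ for some $c\in E(\pi)^{\times}$. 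Now $\langle\,,\,\rangle$ is the realization of the polarization morphism $M(\Pi)\otimes M(\Pi)\rightarrow\Q(-w)$ and is therefore $\C$-bilinear, so $\langle\beta,\beta'\rangle=c\,\langle\beta,\beta\rangle$ and $\langle\beta',\beta'\rangle=c^{2}\langle\beta,\beta\rangle$. Since $\langle\beta,\beta\rangle\neq 0$, which is built into the statement that $P^{(s)}(\Pi,\xi_{r,s})$ is a nonzero complex number, we conclude $\cfrac{\langle\beta,\beta'\rangle}{\langle\beta,\beta\rangle}=c\in E(\pi)$ and $\cfrac{\langle\beta',\beta'\rangle}{\langle\beta,\beta\rangle}=c^{2}\in E(\pi)$, that is, $\langle\beta,\beta\rangle\sim_{E(\pi)}\langle\beta',\beta'\rangle$.

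\emph{Main obstacle.} The only real content is the one-dimensionality asserted in the first step: multiplicity one for the automorphic representation $\pi$ of $GU(r,s)$, which here rests on the base change results of Harris-Labesse quoted above, and the exactness of the coherent/de Rham comparison over the reflex field together with the rationality of the Hecke action on coherent cohomology, from M.~Harris's earlier work. The remaining ingredient, regularity of $M(\Pi)$, is exactly the regularity hypothesis on $\Pi$. Everything after that is formal.
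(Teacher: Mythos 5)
The paper does not prove this statement; it simply quotes Proposition 3.19 of \cite{harrisimrn}, so there is no in-paper argument to compare against and you really are reconstructing. Your skeleton is the right one: show that the $\pi_f$-isotypic part of the degree-zero holomorphic coherent cohomology is a $\C$-line carrying a compatible rational structure, then conclude by linear algebra from $\C$-bilinearity of the pairing.

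Two places need tightening. First, your multiplicity-one claim is asserted too quickly. Strong multiplicity one for $GL_n(\AK)$ together with ``uniqueness of its descent'' does not by itself give $m(\pi_f\otimes\pi'_\infty)\leq 1$ in the cuspidal spectrum of $GU(r,s)(\AQ)$: the theorems quoted in the paper (Theorems \ref{bcunitaryexistence} and \ref{bcunitary}, following \cite{harrislabesse}) assert existence of a descent, not uniqueness, and the archimedean $L$-packet contains several discrete series. What is actually being used is a multiplicity-one theorem for the relevant unitary similitude group, which is available under the supercuspidality hypothesis at the split place $q$ (Harris--Labesse and Harris--Taylor, or the later Arthur-type classification); you should say so explicitly rather than derive it from $GL_n$. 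Second, you identify $\langle\,,\,\rangle$ with the Betti realization of the polarization $M(\Pi)\otimes M(\Pi)\to\Q(-w)$. On a single Hodge component $M^{p,q}$ with $p\neq q$ that form vanishes identically, since a polarization of a weight-$w$ Hodge structure pairs $M^{p,q}$ with $M^{q,p}$; taken literally your identification would give $\langle\beta,\beta\rangle=0$, contradicting the nonvanishing of $P^{(s)}(\Pi,\xi_{r,s})$. The normalized pairing of section $2.6.8$ of \cite{harris97} is the polarization precomposed with the ($\C$-linear) infinite Frobenius $F_\infty$, which swaps $M^{p,q}$ and $M^{q,p}$; it is therefore still $\C$-bilinear, and your final computation $\langle\beta,\beta'\rangle=c\langle\beta,\beta\rangle$, $\langle\beta',\beta'\rangle=c^2\langle\beta,\beta\rangle$ goes through unchanged, but the description of the form needs this correction.

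One simplification you may as well use: since $\beta,\beta'$ lie in the $K$-rational structure $V_K$ of the ambient space and in the $\C$-line $L$, and $(V_K\cap L)\otimes_K\C$ injects into $L$, the intersection $V_K\cap L$ is at most a $K$-line; it is exactly a $K$-line because $\beta\neq 0$. Hence $\beta'=c\beta$ with $c\in K\subset E(\pi)$ directly, and you do not need to invoke the $E(\pi)$-rational structure of the $\pi_f$-isotypic subspace at all.
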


\begin{rem}
We may furthermore choose $\beta=\beta(\Pi)$ such that $<\beta,\beta>$ is equivariant under action of $G_{K}$ in the sense that Equation (\ref{mainformula}) holds.
\end{rem}

Actually, this period $P^{(s)}(\Pi,\xi_{r,s})$ is also independent of the choice of $\xi_{r,s}$. This is a corollary to Theorem \ref{maintheorem}. See the end of section \ref{independentxi} for a further discussion of this point. Meanwhile, we fix a $\xi_{r,s}$ and define the ($s$-th) \textbf{automorphic period} of $\Pi$ by $P^{(s)}(\Pi):=P^{(s)}(\Pi,\xi_{r,s})$.

\begin{rem}\label{supercuspidal}
We now see that one can construct a motive associated to $\Pi$ if $\Pi$ descends for any of these groups $GU(r,s)$. If we replace $GU(r,s)$ by the similitude unitary group of sign $(n-1,1)$ at infinity and quasi-split at every finite places, the condition that $\Pi$ is supercuspidal at places over $q$ is revealed to be unnecessary. On the other hand, if we want to define $P^{(s)}$ for all $s$, then this condition is inevitable in the setting of this article.
\end{rem}

\end{subsection}

\begin{subsection}{Special values of $L$-functions for $GL_{n}\times GL_{1}$}\label{independentxi}

In this section, we consider $\Pi$, a cuspidal representation of $GL_{n}(\AK)$ which is regular, cohomological, conjugate self-dual and supercuspidal at places over $q$.

The main theorem of \cite{harris97} is stated as follows. The original theorem (Theorem $3.5.13$ of \cite{harris97}) assumed the condition $m>n-\cfrac{\kappa}{2}$ (see the following theorem for notation). This condition was improved by M. Harris in several articles later (c.f. section $4.3$ of \cite{harrismotivic}, chapter $3$ of \cite{harrisunitaryperiod} or chapter $4$ of \cite{harrissimpleproof}).

\begin{thm}\label{maintheorem}(Theorem $4.27$ in \cite{harrismotivic})

Let $\Pi$ be as in the beginning of this section. We denote its infinity type by $(z^{a_{i}}\overline{z}^{-a_{i}})_{1\leq i\leq n}$ with $a_{1}>a_{2}>\cdots>a_{n}$, $a_{i}\in \Z+\cfrac{n-1}{2}$ for all $1\leq i\leq n$. Let $\eta$ be an algebraic Hecke character of $K$ with infinity type $\eta_{\infty}(z)=z^{a}\bar{z}^{b}$. We assume that for all $1\leq i \leq n$, $b-a\neq 2a_{i}$.

Write $\eta^{c}=\widetilde{\beta}\alpha$. Here $\alpha$, $\beta$ are Hecke characters of $K$ with $\alpha_{\infty}(z)=z^{\kappa}$ and $\beta_{\infty}(z)=z^{-k}$, $\kappa, k\in \Z$ as in the section \ref{alpha}.  It is easy to verify that $\kappa=a+b$, $k=a$.

Define $s=s({\eta}^{c}, \Pi^{\vee})=\#\{i\mid  a-b+2a_{i}<0\}$ and $r=n-s=\#\{i\mid  a-b+2a_{i}>0\}=\max\{i\mid a-b+2a_{i}>0\}$.

 For $m \in \Z$,  $m$ is critical for $M(\Pi)\otimes M(\eta)$ if and only if
\begin{equation}\label{criticalvalue2}
\max(1-a_{r}-a,1+a_{r+1}-b) \leq m-\cfrac{n-1}{2} \leq \min(a_{r}-b, -a_{r+1}-a).
\end{equation}

If such $m$ satisfies furthermore $m\geq\cfrac{n-\kappa}{2}=\cfrac{n-a-b}{2}$, then
\begin{eqnarray} \label{mainformula}
&L(m,M(\Pi)\otimes M(\eta))=L(m-\cfrac{n-1}{2}, \Pi\otimes \eta)\sim_{E(\Pi)E(\beta)E(\alpha)}&\nonumber\\
 &(2\pi i)^{(m-\frac{n-1}{2})n}\mathcal{G}(\varepsilon_K)^{[\frac{n}{2}]}P^{(s)}(\Pi)
[(2\pi i)^{\kappa}\mathcal{G}(\alpha_{0})]^{s}
[ (2\pi i)^{k}p(\check{\beta}^{(2)}\check{\alpha},1)]^{n-2s}&
\end{eqnarray}
equivariant under action of $G_{K}$.

Here  $\varepsilon_K$ is the Artin character of $\AQ$ associated to the extension $K/\Q$.

\end{thm}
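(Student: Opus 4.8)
The first assertion is a pure Hodge-number computation and uses nothing about $\Pi$ beyond regularity. Granting the motive $M(\Pi)$ recalled in Section \ref{automorphicperiod} (Conjecture \ref{conjmotive} in the conjugate self-dual case) and the rank-$2$ motive $M(\eta)$ of the algebraic Hecke character $\eta$, the Hodge type of $M(\Pi)\otimes M(\eta)$ is obtained by adding $(-a,-b)$ and $(-b,-a)$ to every bidegree of $M(\Pi)$, the latter being $\{(-a_i+\tfrac{n-1}{2},\,a_i+\tfrac{n-1}{2})\}_i\cup\{(a_i+\tfrac{n-1}{2},\,-a_i+\tfrac{n-1}{2})\}_i$ because $b_i=-a_i$. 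The hypothesis $b-a\neq 2a_i$ for every $i$ forces $p\neq q$ in every Hodge bidegree of the product motive, so Lemma \ref{critical} applies verbatim and yields exactly \eqref{criticalvalue2}; in particular the critical strip is nonempty.

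\textbf{Reduction to a Petersson pairing.} For the $L$-value formula I would follow the doubling/pullback strategy. Using Lemma \ref{lemmaxi} and Theorem \ref{bcsimilitude}, descend $\Pi^{\vee}\otimes\xi_{r,s}$ to a cuspidal $\pi$ on $GU(r,s)(\AQ)$ holomorphic at infinity, and fix a $K$-rational class $\beta\in\widetilde{H}^{0}(\widetilde{Sh}(GU(r,s)),\widetilde{\mathcal E}(\pi'_\infty))[\pi_f]$ with $\langle\beta,\beta\rangle=(2\pi)^{C_{r,s}}P^{(s)}(\Pi)$. Realizing $GU(r,s)$ and a mirror copy as a see-saw pair inside $GU(n,n)$, one expresses $L(m,\Pi\otimes\eta)$, via the doubling integral of Piatetski-Shapiro--Rallis, as a Petersson pairing $\langle\beta,\ \mathrm{res}(E_{\eta,m})\cdot\beta'\rangle$ times elementary archimedean factors, where $E_{\eta,m}$ is a Siegel--Eisenstein series on $GU(n,n)$ with inducing data built from $\eta$ at the point $s=m$, $\mathrm{res}$ is restriction to the relevant boundary stratum, and $\beta'$ is the rational antiholomorphic partner of $\beta$. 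Provided $m\geq\tfrac{n-\kappa}{2}$, the defining section lies in the range where the archimedean integral is nonzero and $E_{\eta,m}$, normalized by its constant term, is $\overline{\Q}$-rational; hence $\langle\beta,\mathrm{res}(E_{\eta,m})\cdot\beta'\rangle$ is an algebraic multiple of $\langle\beta,\beta\rangle$, i.e. of $P^{(s)}(\Pi)$.

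\textbf{Identifying the explicit factors.} The power $(2\pi i)^{(m-\frac{n-1}{2})n}$ comes out of the archimedean zeta integral once the archimedean $\Gamma$-factors are folded into the motivic normalization of $L$. The arithmetic of the inducing character splits according to $\eta^{c}=\widetilde\beta\alpha$: the factor $[(2\pi i)^{\kappa}\mathcal G(\alpha_0)]^{s}$ is $p(\alpha\alpha^{c},1)^{-s}$ rewritten through Lemma \ref{lemmas} and the relations following it, while $[(2\pi i)^{k}p(\check\beta^{(2)}\check\alpha,1)]^{n-2s}$ packages the remaining CM periods of $\alpha$ and $\beta$; the exponents $s$ and $n-2s$ are dictated by the signature $(r,s)$ and by how $\alpha,\beta$ occur in the constant/Fourier--Jacobi terms of $E_{\eta,m}$. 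The factor $\mathcal G(\varepsilon_K)^{[n/2]}$ is independent of $\eta$ and enters through the comparison of the canonical $K$-rational model of $Sh(GU(r,s))$ (and of the normalized bilinear form $\langle\,,\,\rangle$ of Section \ref{automorphicperiod}) with the $\Q$-structures, i.e. from discriminant-type contributions for the real group underlying $GU(r,s)$; cf. $(1.10.9)$--$(1.10.10)$ of \cite{harris97}. The rationality holds over $E(\Pi)E(\alpha)E(\beta)$ since each ingredient ($\beta$, $\beta'$, the normalized $E_{\eta,m}$, the restriction map, the pairing) is defined over this field.

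\textbf{Equivariance and main obstacle.} Galois equivariance under $G_K$ is obtained by running the whole construction $\sigma$-equivariantly: the $\mathrm{Aut}(\C)$-action on coherent cohomology and on $\Pi_f$ carries $\beta$ to the analogous class for $\Pi^{\sigma}$, the normalized Eisenstein series and its restriction are $\sigma$-equivariant, and the CM periods and Gauss sums transform by Proposition \ref{propCM} and Lemma \ref{lemmas}; one then invokes the $G_K$-equivariance criterion of Section \ref{notation}. The main obstacle, and what occupies the bulk of \cite{harris97} and its sequels, is twofold: (i) the precise archimedean computation of the doubling zeta integral in the cohomological normalization, which pins down the exact power of $2\pi i$ and establishes non-vanishing of the archimedean factor (the improvements removing the original hypothesis $m>n-\tfrac{\kappa}{2}$ require genuinely finer archimedean analysis, cf. \cite{harrisunitaryperiod,harrissimpleproof}); and (ii) controlling the field of rationality and the $G_K$-action on the Siegel--Eisenstein series and on its restriction to the boundary, which is precisely where $\mathcal G(\varepsilon_K)^{[n/2]}$ and the CM periods of $\alpha,\beta$ must be accounted for exactly rather than only up to $\overline{\Q}^{\times}$.
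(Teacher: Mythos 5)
Your critical-value argument matches the paper's: both apply Lemma \ref{critical} after observing that $b-a\neq 2a_i$ for all $i$ rules out Hodge bidegrees $(p,p)$ in $M(\Pi)\otimes M(\eta)$, so the critical range is given by the lemma and is nonempty.

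For the $L$-value formula you take a genuinely different route. The paper does not re-derive the formula at all: the statement is a quotation of Theorem $4.27$ of \cite{harrismotivic}, and the ``proof'' in the paper is the remark that one must apply the cited theorem to the pair $(\Pi^{\vee},\eta^{c})$ rather than to $(\Pi,\eta)$, perform a short translation of the parameters $(\kappa,k,s)$, and then use conjugate self-duality of $\Pi$ to write $L(s,\Pi^{\vee}\otimes\eta^{c})=L(s,\Pi^{c}\otimes\eta^{c})=L(s,\Pi\otimes\eta)$. You instead sketch a from-scratch re-derivation via the doubling method --- Siegel--Eisenstein series on $GU(n,n)$, restriction to $GU(r,s)$, Petersson pairing against the $K$-rational class $\beta$, and identification of the elementary factors --- which is essentially the content of the cited theorem itself rather than the deduction the paper makes from it. Your route is more self-contained and makes visible where each factor ($\mathcal{G}(\varepsilon_K)^{[n/2]}$ from the $K$-structure of the Shimura datum, $(2\pi i)^{(m-\frac{n-1}{2})n}$ from the archimedean zeta integral, the CM-period factors from the Eisenstein datum) comes from; but it delegates the hard steps (the exact archimedean computation, the removal of the original constraint $m>n-\tfrac{\kappa}{2}$, rationality of the normalized Eisenstein series) to ``the bulk of \cite{harris97} and its sequels,'' which is precisely what the paper avoids by treating Theorem $4.27$ of \cite{harrismotivic} as a black box. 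If you wanted to reproduce the paper's actual argument, the only work to display is the contragredient/conjugate bookkeeping $(\Pi,\eta)\leftrightarrow(\Pi^{\vee},\eta^{c})$ and the dictionary between the parameters $(a,b)$ of $\eta$ and $(\kappa,k)$ of $\alpha,\beta$; as written, you have omitted this step even though it is the only one the paper itself carries out.
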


\begin{rem}

\begin{enumerate}

\item $M(\eta)$ is the motive associated to the Hecke character $\eta$ which can be seen as a special case of motives introduced in the previous section. For motives associated to Hecke characters over general fields, detailed introduction can be found in chapter $8$ of \cite{deligne79}.


\item We have applied theorem $4.27$ of \cite{harrismotivic} to $(\Pi^{\vee}, \eta^{c})$ but not $(\Pi,\eta)$. A little calculation on the change of notation for $k$, $\kappa$ and $s$ is needed here.

From the theorem in \textit{loc.cit}, we get an interpretation for $L(m, \Pi^{\vee} \otimes \eta^{c})$ directly. But since $\Pi$ is conjugate self-dual and thus $\Pi^{\vee} \otimes \eta^{c} \cong \Pi^{c}\otimes \eta ^{c}$, we have $L(s,\Pi^{\vee} \otimes \eta^{c})=L(s,\Pi^{c}\otimes\eta^{c})=L(s,\Pi\otimes \eta)$. Therefore we can obtain a formula for $L(m,\Pi\otimes \eta)$ as in the theorem.

\item We have already seen that $M(\Pi)\otimes M(\eta)$ is pure. The condition $b-a\neq 2a_{i}$ for all $1\leq i \leq n$ implies that the Hodge type of $M(\Pi)\otimes M(\eta)$ doesn't contain any type of form $(p,p)$ and then satisfies the condition in Lemma \ref{critical}. The result above on the critical values thus follows from Lemma \ref{critical}.

We remark that if $a-b+2a_{i}=0$ for some $i$, there is no critical value for $M(\Pi)\otimes M(\eta)$ (c.f. Lemma $1.7.1$ of \cite{harris97}).

\item The infinity type of $\Pi$, $(z^{a_{i}}\overline{z}^{-a_{i}})_{1\leq i\leq n}$, is invariant under the action of $G_{K}$.

In fact, there exists a motive $M_{0}$ over $K$ with coefficients in $E$ associated to $\Pi$ and the motive $M(\Pi)$ considered above is $Res_{K/Q}M_{0}$. For the fixed $K\hookrightarrow \C$, $M_{0,B}\otimes \C$ has a Hodge decomposition $M_{0,B}\otimes \C=\bigoplus\limits_{p,q\in \Z}M^{p,q}$ as $E\otimes \C$-module. It has Hodge type $\{(-a_{i}+\cfrac{n-1}{2},-b_{i}+\cfrac{n-1}{2})|1\leq i\leq n\}$ (c.f. paragraph $4.3.3$ of \cite{clozelaa}).

Let $\sigma$ be an element of $G_{K}$. We may identify $\sigma$ with $\sigma|_{E}$, an embedding from $E$ to $\C$, when considering $\Pi^{\sigma}$. We know that Remark \ref{infinitytype} also applies to $M_{0}$. In another word, the Hodge type of $M_{0,B,\sigma}$ is independent of $\sigma$. Since $\sigma$ fixes $K$, it does not change the fixed embedding $K\hookrightarrow \C$. Therefore, the equation $L(s,M_{0},\sigma)=L(s,\Pi^{\sigma})$ implies that $\Pi^{\sigma}$ also has the Hodge type $\{(-a_{i}+\cfrac{n-1}{2},-b_{i}+\cfrac{n-1}{2})|1\leq i\leq n\}$. We can now conclude that the infinity type of $\Pi^{\sigma}$ is the same with $\Pi$.

Similarly, the infinity type of $\eta$ is also invariant under action of $G_{K}$. In particular, the index $s=s({\eta}^{c}, \Pi^{\vee})=\#\{i\mid  a-b+2a_{i}<0\}$ is invariant under the action of $G_{K}$.


\end{enumerate}
\end{rem}

\begin{prop}\label{nonvanishing}
Let $\Pi$ be as in Theorem \ref{maintheorem}. We preserve the notation in Theorem \ref{maintheorem}.

\begin{enumerate}
\item\label{nonvanishingm} For any algebraic Hecke character $\eta$ where its infinity type $z^{a}\bar{z}^{b}$ satisfies $b-a\neq 2a_{i}$ for all $i$, there exists an integer $m\geq \cfrac{n-\kappa}{2}=\cfrac{n-a-b}{2}$ such that $m$ is critical for $M(\Pi)\otimes M(\eta)$.

\item For any fixed integer $0\leq s\leq n$, there exists an algebraic Hecke character $\eta$ and an integer $m$ as in (\ref{nonvanishingm}) such that $a-b+2a_{i}\neq 0$ for all $i$, $s=\#\{i\mid a-b+2a_{i}<0\}$ and $L(m,M(\Pi)\otimes M(\eta))\neq 0$.

where $a,b$ are indices for the infinity type of $\eta$ as before.\end{enumerate}
\end{prop}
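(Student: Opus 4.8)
\emph{Part (1).} This is a direct computation from Theorem~\ref{maintheorem}. Put $d=b-a$ and $r=n-s$. By the criterion in that theorem the integers $m$ critical for $M(\Pi)\otimes M(\eta)$ form the interval with endpoints $\frac{n-1}{2}+\max(1-a_{r}-a,\,1+a_{r+1}-b)$ and $\frac{n-1}{2}+\min(a_{r}-b,\,-a_{r+1}-a)$, and this interval is non-empty by Lemma~\ref{critical} --- the hypothesis $b-a\neq 2a_{i}$ excludes a Hodge type of the form $(p,p)$. So it suffices to check that the right endpoint is $\geq\frac{n-\kappa}{2}$, i.e.\ that $\min(a_{r}-b,\,-a_{r+1}-a)\geq\frac{1-\kappa}{2}$. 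By the definition of $r$ one has $2a_{r}>d$, hence $a_{r}-b>-\frac{a+b}{2}$; likewise $2a_{r+1}<d$, strictly (again using $b-a\neq 2a_{i}$), hence $-a_{r+1}-a>-\frac{a+b}{2}$. Since $a_{r}-b$ and $-a_{r+1}-a$ lie in $\Z+\frac{n-1}{2}$ whereas $-\frac{a+b}{2}\in\frac12\Z$, a short parity check promotes these strict inequalities to $\geq\frac{1-\kappa}{2}$ (the cases $s=0$ and $s=n$, where $a_{0}=+\infty$ or $a_{n+1}=-\infty$, are immediate). Hence $m_{\max}:=\frac{n-1}{2}+\min(a_{r}-b,\,-a_{r+1}-a)$ is critical and satisfies $m_{\max}\geq\frac{n-\kappa}{2}$.

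\emph{Part (2), fixing the infinity type.} The requirements $a-b+2a_{i}\neq 0$ for all $i$ and $s=\#\{i\mid a-b+2a_{i}<0\}$ say exactly that $d=b-a$ is an integer strictly between $2a_{n-s+1}$ and $2a_{n-s}$ (with the conventions $a_{0}=+\infty$, $a_{n+1}=-\infty$); since $a_{n-s}-a_{n-s+1}\geq 1$ and the $2a_{i}$ are integers $\equiv n-1\pmod 2$, such a $d$ exists. Having fixed $d$ I am still free to choose the weight $\kappa=a+b$ (subject only to $\kappa\equiv d\pmod 2$) and the finite part of $\eta$; and algebraic Hecke characters of $K$ with a prescribed infinity type $z^{a}\bar z^{b}$ do exist, for instance $\psi^{a}(\psi^{c})^{b}$ with $\psi$ as in Lemma~\ref{psi}, or any finite-order twist thereof. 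By Part~(1) every such $\eta$ admits a critical $m\geq\frac{n-\kappa}{2}$; what is left is to choose the remaining data so that such an $m$ also has $L(m-\frac{n-1}{2},\Pi\otimes\eta)\neq 0$.

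\emph{Part (2), non-vanishing.} I would distinguish two cases according to the gap $a_{n-s}-a_{n-s+1}$. If this gap is $\geq 2$ (in particular whenever $s=0$ or $s=n$), choose $d$ in $[\,2a_{n-s+1}+2,\,2a_{n-s}-2\,]$; the computation of Part~(1) then gives $m_{\max}\geq\frac{n+1-\kappa}{2}$. Writing $\eta=\eta_{u}||\cdot||_{\AK}^{\kappa/2}$ with $\eta_{u}$ unitary, one has $L(m_{\max}-\frac{n-1}{2},\Pi\otimes\eta)=L(s_{0},\Pi\otimes\eta_{u})$ with $s_{0}=m_{\max}-\frac{n-1}{2}+\frac{\kappa}{2}\geq 1$, and since $\Pi\otimes\eta_{u}$ is unitary cuspidal this value is non-zero by the non-vanishing of Rankin--Selberg $L$-functions on the line $\mathrm{Re}(s)=1$ and to its right; so one takes $m=m_{\max}$. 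If the gap is exactly $1$ (so $1\leq s\leq n-1$), then $d$ is forced and the same computation gives $m_{\max}=\frac{n-\kappa}{2}$, the centre of the functional equation, so that the value one needs is the central value $L(\tfrac12,\Pi\otimes\eta_{u})$. In this case one invokes the non-vanishing results of \cite{harrisunitaryperiod} for the relevant Rankin--Selberg central value, applied to the descent of $\Pi$ to the unitary group of signature $(r,s)$, and chooses $\kappa$ together with the finite part of $\eta$ so as to fulfil the hypotheses of that result.

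\emph{Expected main difficulty.} All of the above is elementary except the last, ``tightly balanced'' case $a_{n-s}-a_{n-s+1}=1$. The hard part will be to verify that the non-vanishing theorem of \cite{harrisunitaryperiod} --- namely which twists it permits, under which local hypotheses, and at which point of the critical strip it applies --- can be invoked compatibly with the archimedean constraint forced in the first step and with the standing assumptions on $\Pi$ (conjugate self-dual, cohomological, supercuspidal at the places above $q$).
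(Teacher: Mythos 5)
Your argument is correct and follows the same route as the paper, with one worthwhile small difference. For Part (1), the paper reads off $-1-2a_{r+1}\geq a-b\geq 1-2a_r$ directly from $a-b+2a_i\neq 0$ (an integer inequality), which slightly streamlines your parity discussion, but the content is identical. For Part (2) the paper makes exactly your dichotomy on the gap $a_r-a_{r+1}$. The genuine difference is the easy half of that dichotomy: when $a_r-a_{r+1}\geq 2$ you choose $d$ so that the top critical point lands strictly to the right of the centre and then invoke Jacquet--Shalika non-vanishing of $L(s,\Pi\otimes\eta_u)$ on $\mathrm{Re}(s)\geq 1$, whereas the paper still routes this case through Corollary $3.3$ of \cite{harrisunitaryperiod}, at the price of imposing the auxiliary condition $(3.2.1)$ of that reference on $\alpha$. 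Your argument is more elementary and needs no local hypotheses there; the paper's is uniform with the hard case. For the tight case $a_r-a_{r+1}=1$, where only the central value $L(\tfrac12,\Pi\otimes\eta_u)$ is available, both you and the paper defer to the non-vanishing result of \cite{harrisunitaryperiod} (Theorem $3.4$ there), choosing $\alpha$ and additional local data so as to meet its hypotheses --- so the ``expected main difficulty'' you flag is indeed exactly where the paper leans on that theorem, and the resolution is as you anticipate.
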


\begin{dem}

\begin{enumerate}
\item Let $\eta$ be a fixed Hecke character and $s$ be the integer $\#\{i\mid a-b+2a_{i}<0\}$. Then $r=n-s=\#\{i\mid a-b+2a_{i}>0\}=max\{i\mid a-b+2a_{i}>0\}$. Therefore, $-1-2a_{r+1}\geq a-b \geq 1-2a_{r}$ by the fact that $a-b+2a_{i}\neq 0$ for all $i$.

Moreover, the two integers $\cfrac{n-1}{2}+a_{r}-b$ and $\cfrac{n-1}{2}-a_{r+1}-a$ are both no less than $\cfrac{n-a-b}{2}$. Hence there exists an integer $m$ such that $$\cfrac{n-a-b}{2}\leq m\leq \min(a_{r}-b, -a_{r+1}-a)+\cfrac{n-1}{2}. $$

To prove that $m$ is critical for $M(\Pi)\otimes M(\eta)$, it suffice to verify that $\max(1-a_{r}-a,1+a_{r+1}-b)+\cfrac{n-1}{2}\leq \cfrac{n-a-b}{2}$ which is trivial.

\item Let $1\leq r\leq n-1$ be a fixed integer now. We may take $a$, $b\in \Z$ such that $-1-2a_{r+1}\geq a-b \geq 1-2a_{r}$. Then $a_b+2a_{i}\neq 0$ for all $i$ and $r=\max\{i\mid a-b+2a_{i}>0\}$.

When $a_{r}-a_{r+1}\geq 2$, there exists an integer $m$ as in (\ref{nonvanishingm}) with non vanishing $L$-value if $\alpha$ satisfies condition $(3.2.1)$ of \cite{harrisunitaryperiod} (see
Comments ($3.2.2.ii$) and corollary $3.3$ of \cite{harrisunitaryperiod}). Here $\eta^{c}=\tilde{\beta}\alpha$ as in Theorem \ref{maintheorem}. When $a_{r}-a_{r+1}= 1$, such $m$ exists provided that $\alpha$ satisfies more conditions on finite number of finite places (see Theorem $3.4$ of \cite{harrisunitaryperiod}). In all cases, we can find $\eta$ with infinity type $z^{a}\bar{z}^{b}$ and an integer $m$ as in (\ref{nonvanishingm}) such that $L(m,M(\Pi)\otimes M(\eta))$ does not vanish.
\end{enumerate}
\end{dem}

For any $0\leq s\leq n$ fixed, we can now show that the period $P^{(s)}(\Pi,\xi_{r,s})$ depends only on $\Pi$. We fix a Hecke character $\eta$ and an integer $m\geq \cfrac{n-\kappa}{2}$ as in second part of Proposition \ref{nonvanishing}. We apply Theorem \ref{maintheorem} to $(\Pi,\eta)$ and $m$. Note that both the left hand side and the right hand side of equation (\ref{mainformula}) are non zero. Hence the transitivity holds forthe relation $\sim$ now (c.f. Remark \ref{transitive}). Therefore, for any $\xi_{r,s}'$ verifying conditions in Theorem \ref{bcsimilitude}, we have $P^{(s)}(\Pi,\xi_{r,s})\sim_{E(\Pi)E(\beta)E(\alpha)} P^{(s)}(\Pi,\xi_{r,s}') $. On varying $\eta$, we conclude that:

\begin{cor}\label{independence}
The period $P^{(s)}(\Pi,\xi_{r,s})$ does not depend on the choice of $\xi_{r,s}$ modulo $E(\Pi)^{\times}$.
\end{cor}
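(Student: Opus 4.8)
The plan is to read the independence straight off the main formula (\ref{mainformula}): on its right-hand side the only factor that can possibly depend on the auxiliary Hecke character $\xi_{r,s}$ is $P^{(s)}(\Pi)=P^{(s)}(\Pi,\xi_{r,s})$ itself, since the power of $2\pi i$, the Gauss sums $\mathcal{G}(\varepsilon_K)$ and $\mathcal{G}(\alpha_0)$, and the CM period $p(\check\beta^{(2)}\check\alpha,1)$ all depend on $\eta$ alone, while the left-hand side $L(m-\tfrac{n-1}{2},\Pi\otimes\eta)$ does not involve $\xi_{r,s}$ at all. So the whole point is to arrange the $L$-value to be non-zero, after which the two sides can be divided and $\xi_{r,s}$ cancels.

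Concretely, I would fix $0\le s\le n$, set $r=n-s$, and invoke Proposition \ref{nonvanishing}(2) to pick an algebraic Hecke character $\eta$ (infinity type $z^a\bar z^b$) and an integer $m\ge\tfrac{n-\kappa}{2}$ that is critical for $M(\Pi)\otimes M(\eta)$, with $a-b+2a_i\ne 0$ for all $i$, $s=\#\{i\mid a-b+2a_i<0\}$, and $L(m,M(\Pi)\otimes M(\eta))\ne 0$. Given two characters $\xi_{r,s},\xi'_{r,s}$ satisfying the hypotheses of Theorem \ref{bcsimilitude}, I would apply (\ref{mainformula}) to the data $(\Pi,\eta,m)$ with each of them in turn. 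The two instances share the same non-zero left-hand side, so the relation $\sim_{E(\Pi)E(\beta)E(\alpha)}$ is transitive at this point (Remark \ref{transitive}); and since the common factor $(2\pi i)^{(m-\frac{n-1}{2})n}\mathcal{G}(\varepsilon_K)^{[\frac{n}{2}]}[(2\pi i)^\kappa\mathcal{G}(\alpha_0)]^s[(2\pi i)^k p(\check\beta^{(2)}\check\alpha,1)]^{n-2s}$ is non-zero in $(E(\Pi)E(\beta)E(\alpha)\otimes\C)^\times$ it can be cancelled, which gives
$$P^{(s)}(\Pi,\xi_{r,s})\sim_{E(\Pi)E(\beta)E(\alpha)}P^{(s)}(\Pi,\xi'_{r,s}),$$
equivariantly under $G_K$; equivalently, the ratio $P^{(s)}(\Pi,\xi_{r,s})/P^{(s)}(\Pi,\xi'_{r,s})$ — well defined modulo $E(\Pi)^\times$ — lies in $E(\Pi)E(\beta)E(\alpha)$.

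The remaining task is to descend the coefficient field from $E(\Pi)E(\beta)E(\alpha)$ to $E(\Pi)$. The key observation is that this ratio is a single quantity attached to $\Pi$, $\xi_{r,s}$ and $\xi'_{r,s}$ only, hence independent of the auxiliary $\eta$; by the step above it therefore lies in $E(\Pi)E(\beta_\eta)E(\alpha_\eta)$ for \emph{every} admissible $\eta$, and I would finish by varying $\eta$ — twisting it by suitable finite-order characters while keeping $s$ fixed and preserving the non-vanishing of Proposition \ref{nonvanishing}(2) — so that the intersection of all these fields is $E(\Pi)$, using the $G_K$-equivariance to rephrase this as the statement that no non-trivial element of $Gal(\overline{\Q}/E(\Pi))$ can fix the $\eta$-data for every admissible $\eta$. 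I expect this last step — making ``enough freedom in $\eta$'' precise, i.e. that the admissible $\eta$ separate the non-trivial Galois elements over $E(\Pi)$ — to be the only genuinely delicate point; everything else is a formal manipulation of $\sim$ once Proposition \ref{nonvanishing} supplies the non-vanishing.
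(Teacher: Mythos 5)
Your argument is correct and is essentially the proof the paper itself gives: fix $\eta$ and $m$ with non-vanishing via Proposition \ref{nonvanishing}(2), apply (\ref{mainformula}) to each of $\xi_{r,s},\xi'_{r,s}$, use that both sides are non-zero so transitivity of $\sim$ holds, cancel the common $\eta$-dependent factor to get $P^{(s)}(\Pi,\xi_{r,s})\sim_{E(\Pi)E(\beta)E(\alpha)}P^{(s)}(\Pi,\xi'_{r,s})$, and then descend the coefficient field to $E(\Pi)$ by varying $\eta$. You are in fact more explicit than the paper about what ``varying $\eta$'' must deliver, namely that the admissible $\eta$'s (those admitting a critical $m$ with non-vanishing $L$-value and the right signature $s$) are rich enough that the intersection of the fields $E(\Pi)E(\beta_\eta)E(\alpha_\eta)$ is $E(\Pi)$; the paper treats this as evident.
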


\end{subsection}

\begin{subsection}{Special values of $L$-functions for $GL_{n}\times GL_{n-1}$}\label{ntimesn-1}

In \cite{harrismotivic}, the authors also gave an interpretation of special values of $L$-function for $GL_{n}\times GL_{n-1}$ over a quadratic imaginary field in terms of automorphic periods. Let us introduce this result in this section.

Let $n\geq 2$ be an integer and $K$ be a quadratic imaginary field.

Let $\Pi$ and $\Pi'$ be two cuspidal representations of $GL_{n}(\AK)$ and $GL_{n-1}(\AK)$ which satisfy all the conditions in Theorem \ref{maintheorem}, i.e. $\Pi$ and $\Pi'$ are both regular, cohomological, conjugate self-dual and supercuspidal at places over $q$.

We denote the infinity type of $\Pi$ by $(z^{a_{i}}\overline{z}^{-a_{i}})_{1\leq i\leq n}$ with $a_{1}>a_{2}>\cdots>a_{n}$. Since $\Pi$ is cohomological, it is associated to a finite dimensional representation.  The highest weight $(\mu_{1},\mu_{2}, \cdots,\mu_{n})$ of this finite dimensional representation is calculated in section $2.4$ of \cite{harrismotivic}:
$$\mu_{1}=-a_{n}-\frac{n-1}{2}, \mu_{2}=-a_{n-1}-\frac{n-3}{2},\mu_{3}=-a_{n-2}-\frac{n-5}{2}\cdots, \mu_{n}=-a_{1}+\frac{n-1}{2}.$$

Similarly, if we denote the infinity type of $\Pi'$ by $(z^{b_{i}}\overline{z}^{-b_{i}})_{1\leq i\leq n-1}$ with $b_{1}>b_{2}>\cdots>b_{n-1}$ and the highest weight of the finite dimensional representation associated to it by $(\lambda_{i})_{1\leq i\leq n-1}$, we have:
$$\lambda_{1}=-b_{n-1}-\frac{n-2}{2}, \lambda_{2}=-b_{n-2}-\frac{n-4}{2},\lambda_{3}=-b_{n-3}-\frac{n-6}{2}\cdots, \lambda_{n-1}=-b_{1}+\frac{n-2}{2}.$$

The method in \cite{harrismotivic} requires the following hypothesis:
\begin{hyp}\label{hyp}
The highest weight $\mu$ and $\lambda$ satisfy:
$$\mu_{1}\geq -\lambda_{n-1}\geq \mu_{2}\geq -\lambda_{n-2}\geq \cdots\geq -\lambda_{1}\geq \mu_{n}.$$
\end{hyp}

In \cite{harrismotivic}, the authors defined:
\begin{itemize}
\item $p(m,\Pi_{\infty},\Pi'_{\infty})$, a complex number depending only on $m$, $\Pi_{\infty}$, $\Pi'_{\infty}$(c.f. Proposition $6.4$ of \textit{loc.cit});

\item $Z(\Pi_{\infty})$ (resp. $Z(\Pi'_{\infty})$) a  complex number depending only on $\Pi_{\infty}$ (resp. $\Pi'_{\infty}$) (c.f. Theorem $6.7$ of \textit{loc.cit}).

\end{itemize}

We also need the following hypothesis:

\begin{hyp}\label{hypregular}
$\Pi$ and $\Pi'$ are \textbf{very regular}, which means $\mu_{j}-\mu_{j+1}\geq 2$ for all $j$ and $\lambda_{k}-\lambda_{k+1}\geq 2$ for all $k$. In other words, $a_{j}-a_{j+1}\geq 3$ for all $j$ and $b_{k}-b_{k+1}\geq 3$ for all $k$.
\end{hyp}

We can then restate Theorem $6.10$ of \cite{harrismotivic}:
\begin{thm}\label{maintheorem2}
Let $\Pi$ and $\Pi'$ be as above. We assume the highest weight $\mu$ and $\lambda$ defined above satisfy the Hypothesis \ref{hyp} and \ref{hypregular}.
Let $m$ be a non negative integer. If $m+n-1$ is critical for $M(\Pi)\otimes M(\Pi')$, then
$$L(m+\frac{1}{2},\Pi\times \Pi')\sim_{E(\Pi)E(\Pi')} p(m,\Pi_{\infty},\Pi'_{\infty})Z(\Pi_{\infty})Z(\Pi'_{\infty})\prod\limits_{j=1}^{n-1}P^{(j)}(\Pi)\prod\limits_{k=1}^{n-2}P^{(k)}(\Pi')$$
equivariant under action of $G_{K}$.
\end{thm}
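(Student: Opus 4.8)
The plan is to deduce the statement from the machinery of \cite{harrismotivic}; indeed Theorem \ref{maintheorem2} is a restatement of Theorem $6.10$ of \textit{loc.cit.}, so the task is to recall the structure of that argument and to check that the hypotheses match up. The starting point is a global Rankin--Selberg zeta integral representing $L(m+\frac{1}{2},\Pi\times \Pi')$. Using the base change results of Section~\ref{automorphicperiod} together with Theorem \ref{bcsimilitude} and Lemma \ref{lemmaxi}, one first chooses auxiliary Hecke characters $\xi$, $\xi'$ of $K$ so that $\Pi^{\vee}\otimes \xi$ descends to a cohomological cuspidal representation of a similitude unitary group $GU(r,s)$ and, compatibly, $\Pi'^{\vee}\otimes \xi'$ descends to a similitude unitary group in $n-1$ variables. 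The role of Hypothesis \ref{hyp}, the interlacing of the highest weights $\mu$ and $\lambda$, is precisely to guarantee a nonzero $GL_{n}\times GL_{n-1}$ branching, equivalently that the cup product of a coherent cohomology class attached to $\Pi$ with one attached to $\Pi'$ lands in the top-degree cohomology of the relevant Shimura variety and is nonzero there; Hypothesis \ref{hypregular} (very regularity) is used to stay in the range where the archimedean zeta integrals and the relevant branching constants are nonzero, so that the non-vanishing inputs behind Theorem \ref{maintheorem} and its companions in \cite{harrisunitaryperiod} apply.

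Next I would recall how the zeta integral is interpreted cohomologically. Realizing $L(m+\frac{1}{2},\Pi\times \Pi')$, up to an explicit archimedean factor, as a cup-product pairing of rational classes $\beta(\Pi)$ and $\beta(\Pi')$ in the coherent cohomology of the toroidal compactifications $\widetilde{Sh}(GU(r,s))$ and $\widetilde{Sh}$ of the smaller group against a rational Eisenstein-type class, the finite adelic part of the computation produces an element of $E(\Pi)E(\Pi')$ by the rationality and Hecke-equivariance of these classes, and the whole identity becomes $G_{K}$-equivariant because the rational structures are defined over $K$. The archimedean local integral is, by the computations of \cite{harrismotivic}, exactly the factor $p(m,\Pi_{\infty},\Pi'_{\infty})$. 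Finally one expresses the cup product in terms of Petersson norms: the square norms of the rational classes are the automorphic periods, but the passage from the normalization that appears naturally in the cup product to the normalization used to define $P^{(s)}$ in Section~\ref{automorphicperiod} introduces the correction factors $Z(\Pi_{\infty})$ and $Z(\Pi'_{\infty})$. The precise list of periods that appears, namely $P^{(j)}(\Pi)$ for $1\le j\le n-1$ and $P^{(k)}(\Pi')$ for $1\le k\le n-2$, is dictated by the Hodge (Kottwitz) decomposition of the cohomology of $Sh(GU(r,s))$ over the signatures forced by the descent, together with the bookkeeping of which discrete series $\pi'_{\infty}$ contributes in each degree.

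The main obstacle, and the part that is genuinely carried out in \cite{harrismotivic} rather than being formal, is twofold: the cohomological interpretation of the Rankin--Selberg integral, isolating $p(m,\Pi_{\infty},\Pi'_{\infty})$ as a purely archimedean quantity and showing that everything else is algebraic and Galois-equivariant over $E(\Pi)E(\Pi')$; and the explicit archimedean computation identifying the remaining transcendental contributions with the periods $P^{(j)}$, $P^{(k)}$ up to the factors $Z(\Pi_{\infty})$, $Z(\Pi'_{\infty})$. Both of these rely on the very regularity Hypothesis \ref{hypregular} in order to control the relevant archimedean zeta integrals and branching laws and to invoke the needed non-vanishing statements. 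Once these inputs are granted, assembling the identity and tracking the rationality fields is routine; in the present article we simply quote the resulting statement from \textit{loc.cit.}
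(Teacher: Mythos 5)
The paper gives no independent proof of Theorem~\ref{maintheorem2}: it is stated verbatim as a restatement of Theorem~$6.10$ of \cite{harrismotivic}, and you correctly recognize this and conclude by quoting the result from \textit{loc.cit.} Your expository sketch of the Rankin--Selberg and coherent-cohomology argument behind that theorem is a reasonable account of the reference and consistent with the hypotheses imposed here, but since the paper itself supplies only the citation, there is nothing further to check beyond the alignment of hypotheses, which you have done.
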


\end{subsection}
\end{section}

\begin{section}{Period relations for automorphic induction of Hecke characters}
In this section, we show the functoriality of automorphic period for cyclic automorphic induction of Hecke characters.
\begin{subsection}{First calculation}\label{etaandm}
We preserve the assumptions for $\Pi$ and notation in Theorem \ref{maintheorem}.

We write $$\begin{array}{lcl}

\text{A} & := &  [(2\pi i)^{\kappa}\mathcal{G}(\alpha_{0})]^{s} p(\check{\alpha},1)^{n-2s};\\

\text{B} & := &  [ (2\pi i)^{k}p(\check{\beta}^{(2)},1)]^{n-2s}.

\end{array}$$

By the Lemma \ref{lemmas}, we have
$$\begin{array}{lcl}

(A)_{\sigma\in \Sigma_{E(\alpha)}} & \sim_{E(\alpha)} & [p(\alpha\alpha^{c},1)^{-s}p(\alpha^{-1,c},1)^{n-2s}] _{\sigma\in \Sigma_{E(\alpha)}} \\
&\sim _{E(\alpha)} &[p(\alpha^{-1}, 1)^{s}p(\alpha^{-1,c},1)^{n-s}]_{\sigma\in \Sigma_{E(\alpha)}} \\
&\sim_{E(\alpha)}& [p(\alpha^{-1}, 1)^{s}p(\alpha^{-1},\iota)^{n-s}]_{\sigma\in \Sigma_{E(\alpha)}};\\

(B)_{\sigma\in \Sigma_{E(\beta)}} & \sim_{E(\beta)} & [p(\widetilde{\beta},1)^{n-2s}]_{\sigma\in \Sigma_{E(\beta)}} \\
& \sim_{E(\beta)}  & [p(\widetilde{\beta}^{-1},1)^{s} p(\widetilde{\beta},1)^{n-s}]_{\sigma\in \Sigma_{E(\beta)}}  \\
& \sim_{E(\beta)}  & [p(\widetilde{\beta}^{-1},1)^{s} p(\widetilde{\beta}^c,\iota)^{n-s}]_{\sigma\in \Sigma_{E(\beta)}}  \\
&\sim_{E(\beta)} &  [p(\widetilde{\beta}^{-1},1)^{s} p(\widetilde{\beta}^{-1},\iota)^{n-s} ]_{\sigma\in \Sigma_{E(\beta)}} .

\end{array}$$

Therefore,
$$\begin{array}{lcl}
(AB)_{\sigma\in \Sigma_{E(\alpha)E(\beta)} }&\sim_{E(\alpha)E(\beta)}& [p(\widetilde{\beta}^{-1}\alpha^{-1},1)^{s}p(\widetilde{\beta}^{-1}\alpha^{-1},\iota)^{n-s} ]_{\sigma\in \Sigma_{E(\alpha)E(\beta)} }  \\
&\sim_{E(\alpha)E(\beta)}& [p(\check{\eta},1)^{s}p(\check{\eta},\iota)^{n-s}]_{\sigma\in \Sigma_{E(\alpha)E(\beta)} }.
\end{array}$$

We apply moreover Proposition \ref{nonvanishing}. We can then deduce that:

\begin{cor}
Let $0\leq s\leq n$ be a fixed integer.
Let $\Pi$ be as in Theorem \ref{maintheorem}. There exists $\eta$ an algebraic Hecke character of $K$ and $m\in \Z+\cfrac{n-1}{2}$ with $a-b+2a_{i}\neq 0$ for all $i$, $s=\#\{i\mid a-b+2a_{i}<0\}$, $m+\cfrac{n-1}{2}\geq \cfrac{n-\kappa}{2}=\cfrac{n-a-b}{2}$ critical for $M(\Pi\otimes \eta)$ such that
\begin{equation}\label{maineqn}
L(m, \Pi\otimes \eta)\sim_{E(\Pi)E(\eta);K}   (2\pi i)^{mn}\mathcal{G}(\varepsilon_K)^{[\frac{n}{2}]}P^{(s)}(\Pi)
p(\check{\eta},1)^{s}p(\check{\eta},\iota)^{n-s}
\end{equation}
equivariant under action of $G_{K}$ and with both side non zero.

Here $a,b$ are indices for the infinity type of $\eta$ as in Theorem \ref{maintheorem}, and $\varepsilon_K$ is the Artin character of $\AQ$ associated to the extension $K/\Q$.
\end{cor}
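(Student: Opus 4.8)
The plan is to read off the Corollary as a direct consequence of Theorem~\ref{maintheorem}, the archimedean computation of the product $AB$ carried out just above, and Proposition~\ref{nonvanishing}. First I would fix $0\le s\le n$ and apply the second part of Proposition~\ref{nonvanishing} to produce an algebraic Hecke character $\eta$ of $K$, with infinity type $z^{a}\bar{z}^{b}$, together with an integer $m$ (in the normalization of Theorem~\ref{maintheorem}) satisfying $a-b+2a_{i}\neq 0$ for all $i$, $s=\#\{i\mid a-b+2a_{i}<0\}$, $m\geq\frac{n-a-b}{2}$, with $m$ critical for $M(\Pi)\otimes M(\eta)$ and, crucially, $L(m,M(\Pi)\otimes M(\eta))\neq 0$. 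After the evident relabelling $m\mapsto m+\frac{n-1}{2}$ (and the matching change of the $2\pi i$-exponent) this is precisely the list of conditions recorded in the statement.

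Next I would apply Theorem~\ref{maintheorem} to the pair $(\Pi,\eta)$ and this $m$, writing $\eta^{c}=\widetilde{\beta}\alpha$ as there with $\alpha_{\infty}(z)=z^{\kappa}$, $\beta_{\infty}(z)=z^{-k}$, $\kappa=a+b$, $k=a$. This expresses $L(m,\Pi\otimes\eta)$, up to $\sim_{E(\Pi)E(\beta)E(\alpha)}$ and equivariantly under $G_{K}$, as $(2\pi i)^{mn}\mathcal{G}(\varepsilon_{K})^{[\frac{n}{2}]}P^{(s)}(\Pi)$ times $[(2\pi i)^{\kappa}\mathcal{G}(\alpha_{0})]^{s}[(2\pi i)^{k}p(\check{\beta}^{(2)}\check{\alpha},1)]^{n-2s}$. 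Using multiplicativity of CM periods (Proposition~\ref{propCM}) to write $p(\check{\beta}^{(2)}\check{\alpha},1)\sim p(\check{\beta}^{(2)},1)\,p(\check{\alpha},1)$, this last factor equals $AB$ up to $\sim$, and the computation already performed, which relies only on Lemma~\ref{lemmas} (the Gauss sum identity and $p(\|\cdot\|^{\kappa},1)\sim(2\pi i)^{-\kappa}$) together with the multiplicativity and conjugation formulas of Proposition~\ref{propCM}, gives $(AB)_{\sigma}\sim_{E(\alpha)E(\beta)}(p(\check{\eta},1)^{s}p(\check{\eta},\iota)^{n-s})_{\sigma}$.

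Finally I would chain the two relations. The one point that genuinely needs care is that $\sim$ is not transitive in general (Remark~\ref{transitive}); it is legitimate here precisely because both sides are nonzero — the left by the choice of $m$, the right because $P^{(s)}(\Pi)$, the CM periods, the Gauss sum and the powers of $2\pi i$ are all nonzero — so substitution is valid, giving $L(m,\Pi\otimes\eta)\sim_{E(\Pi)E(\beta)E(\alpha)}(2\pi i)^{mn}\mathcal{G}(\varepsilon_{K})^{[\frac{n}{2}]}P^{(s)}(\Pi)p(\check{\eta},1)^{s}p(\check{\eta},\iota)^{n-s}$, $G_{K}$-equivariantly. To descend the coefficient field to $E(\Pi)E(\eta)$ one either chooses $\alpha$, and then $\beta$, with values in $E(\eta)$ — possible since the constraints on $\alpha$ are only $\alpha\alpha^{c}=\eta\eta^{c}$, a prescribed infinity type and the finitely many local conditions coming from the non-vanishing input — or observes that the ratio, being $G_{K}$-equivariant and built intrinsically from the pair $(\Pi,\eta)$, automatically lies in $E(\Pi)E(\eta)$; either way the stated $\sim_{E(\Pi)E(\eta);K}$ follows. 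The $G_{K}$-equivariance is inherited step by step, since Theorem~\ref{maintheorem}, Proposition~\ref{propCM} and Lemma~\ref{lemmas} are all stated equivariantly.

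The genuine mathematical content is entirely in the results being cited, above all the non-vanishing statement Proposition~\ref{nonvanishing}(2); the deduction itself is bookkeeping. Accordingly, I expect no deep obstacle, only three points to get right: the transitivity issue just flagged (which is exactly why the harder non-vanishing half of Proposition~\ref{nonvanishing} is invoked rather than the first half), the renormalization of $m$ between the two conventions, and — the fiddliest of the three — the bookkeeping of the fields of definition $E(\alpha)$, $E(\beta)$ versus $E(\eta)$ compatibly with the $G_{K}$-action.
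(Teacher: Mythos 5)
Your proposal follows essentially the same route as the paper: define the archimedean quantities $A$ and $B$, simplify their product using Lemma~\ref{lemmas} and Proposition~\ref{propCM} to get $p(\check{\eta},1)^{s}p(\check{\eta},\iota)^{n-s}$, invoke Proposition~\ref{nonvanishing}(2) to produce a valid pair $(\eta,m)$ with non-vanishing $L$-value, substitute into Theorem~\ref{maintheorem}, and handle the shift $m\mapsto m-\frac{n-1}{2}$ and the descent from $E(\alpha)E(\beta)$ to $E(\eta)$ by varying $\alpha$ and $\beta$. The paper presents the $AB$ computation first and then applies Proposition~\ref{nonvanishing}, but the logical content is identical, and you correctly flag the two bookkeeping points the paper itself records in a remark.

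One small correction of emphasis: the transitivity caveat (Remark~\ref{transitive}) only requires the \emph{intermediate} quantity to be nonzero, which here is automatic because periods, Gauss sums and powers of $2\pi i$ never vanish. The non-vanishing of $L(m,\Pi\otimes\eta)$ from Proposition~\ref{nonvanishing}(2) is not needed for the chaining inside this corollary; it is included in the conclusion because it is precisely what makes the corollary usable downstream (and it is also needed there to establish Corollary~\ref{independence}). Also, the second half of Proposition~\ref{nonvanishing} is needed not only for non-vanishing but also because it lets one prescribe $s$; the first half gives neither.
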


\begin{rem}
\begin{enumerate}
\item Here we have shifted $m$ by $\cfrac{n-1}{2}$ to simplify the notation in the following.
\item Note that we have replaced $E(\alpha)E(\beta)$ by $E(\eta)$. This can be done by varying $\alpha$ and $\beta$.
\end{enumerate}
\end{rem}

\end{subsection}
\begin{subsection}{Odd dimensional case}
\text{}

Let $F=F^{+}K$ with $F^{+}$ a totally real field of degree $n$ over $\Q$ and $K$ a quadratic imaginary field as in section \ref{notation}.  We assume $F$ is cyclic over $K$. We fix $\{\sigma_{1},\sigma_{2},\cdots, \sigma_{n}\}\subset \Sigma_{F}$,  a CM type of $F$, consisting of elements which are trivial on $K$. 

Let $\chi$ be an algebraic conjugate self-dual Hecke character of $F$. Here conjugate self-dual means $\chi^{-1}=\chi^{c}$. The infinity type of $\chi$ is thus of the form $\chi_{\infty}(z)=\prod\limits_{i=1}^{n}\sigma_{i}\left(\cfrac{z}{\overline{z}}\right)^{a_{i}}$ with $a_{i}\in \Z$. We assume $\chi$ is \textbf{regular}, namely, $a_{i}\neq a_{j}$ for $i\neq j$.

Moreover, we may assume $\chi \neq \chi^{\tau}$ for all $\tau\in Gal(F/K)$ non trivial. Otherwise by Theorem \ref{bccharacter}, $\chi$ is the base change of a Hecke character of a smaller CM field. In this case, we replace $F$ by this smaller CM field.

We assume at first that $n$ is odd.

We denote by $\Pi(\chi)$ the automorphic induction of $\chi$ from $GL_{1}(\AF)$ to $GL_{n}(\AK)$. It is a cuspidal cohomological representation of $GL_{n}(\AK)$ (c.f. \cite{arthurclozel} chapter $3$ Theorem $6.2$). The cuspidal conclusion comes from the fact that  $\chi \neq \chi^{\tau}$ for all $1\neq \tau\in Gal(F/K)$. This fact is not stated but proved in the proof of Theorem $6.2$ in chapter $3$ of \cite{arthurclozel}. The cohomology conclusion, in particular the algebraic conclusion, is due to the fact that $n$ is odd. Moreover, since $\chi$ is conjugate self-dual, we know $\Pi(\chi)$ is also conjugate self-dual. Its infinity type is $(z^{a_{i}}\overline{z}^{-a_{i}})_{1\leq i \leq n}$.

Up to finite extension, we may assume $E(\Pi(\chi))=E(\chi)$. In particular, we assume $E(\chi)$ contains $K$. We also make the hypothesis that:

 \begin{hyp}\label{hypsupercuspidal}
 For any $v$ a place of $K$ over $q$, $\chi_{v}\neq \chi_{v}^{\tau}$ for all $\tau\in Gal(F_{v}/K_{v})$.
  \end{hyp}

 Under this hypothesis, $\Pi(\chi)$ is supercuspidal at all places over $q$ (c.f. Proposition $2.4$ of \cite{harrislanglands}).

We have just shown that the representation $\Pi:=\Pi(\chi)$ satisfies all the conditions in Theorem \ref{maintheorem}. Let $\eta$ and $m$ be as in the last part of Section \ref{etaandm}. We then have (see equation (\ref{maineqn})):

\begin{equation}\label{mainequation}
L(m, \Pi(\chi)\otimes \eta) \sim_{E(\chi)E(\eta);K} (2\pi i)^{mn}\mathcal{G}(\varepsilon_K)^{[\frac{n}{2}]}P^{(s)}(\Pi(\chi))
p(\check{\eta},1)^{s}p(\check{\eta},\iota)^{n-s} 
\end{equation}
 equivariant under action of $G_{K}$ and with both sides non zero.
\bigskip

On the other hand, we have
\begin{equation}\label{LfunctionAI}
L(m,\Pi(\chi)\otimes \eta)=L(m,\chi\otimes \eta\circ N_{\AF/\AK})
\end{equation}
 by the automorphic induction.

Now let us consider $\chi\otimes \eta\circ N_{\AF/\AK}$. It has infinity type $\prod\limits_{1\leq i\leq n} \sigma_{i}^{a+a_{i}}\bar{\sigma}_{i}^{b-a_{i}}$. It is critical since $a-b+2a_{i}\neq 0$ for all $i$.

We define $\Phi_{s,\chi}$ a CM type of $F$ by $\sigma_{i}\in \Phi_{s,\chi}$ if $a-b+2a_{i}<0$; $\bar{\sigma}_{i}\in \Phi_{s,\chi}$ if $a-b+2a_{i}>0$. This CM type is compatible with $\chi\otimes \eta\circ N_{\AF/\AK}$. The notation $\Phi_{s,\chi}$ implicitly indicates that this set depends only on $s$ and $\chi$, but not on $\eta$. Indeed, we can easily deduce from the fact $s=\#\{i\mid  a-b+2a_{i}<0\}$ that for each $i$ such that $a_{i}$ is one of the $s$ smallest numbers in $\{a_{i},1\leq i\leq n\}$, we have $\sigma_{i}\in \Phi_{s,\chi}$; otherwise $\bar{\sigma}_{i}\in \Phi_{s,\chi}$. Moreover, $\#\Phi_{s,\chi}\cap\{\sigma_{i},1\leq i \leq n\}=s$. In other words, there are exactly $s$ elements in $\Phi_{s,\chi}$ whose restriction to $K$ is trivial.

Comparing equation (\ref{criticalvalue1}) with equation (\ref{criticalvalue2}), we see that $m+\cfrac{n-1}{2}$ is critical for $M(\Pi(\chi))\otimes M(\eta)$ if and only if $m$ is critical for $\chi\otimes \eta\circ N_{\AF/\AK}$. Hence $m$ is critical for $\chi\otimes \eta\circ N_{\AF/\AK}$ and we may apply Blasius's result here:

$$L(m,\chi\otimes \eta\circ N_{\AF/\AK})\sim_{E(\chi)E(\eta)}
 D_{F^{+}}^{1/2}(2\pi i)^{mn} p((\chi\otimes \eta\circ N_{\AF/\AK})^{c,-1},\Phi_{s,\chi})$$
 
 equivariant under action of $G_{K}$.
 \bigskip

Therefore, by Proposition \ref{propCM}, we have the following equations equivariant under action of $G_{K}$:
 $$
\begin{array}{lcl}
& & L(m,\chi\otimes \eta\circ N_{\AF/\AK})\\
&\sim_{E(\chi)E(\eta)} & D_{F^{+}}^{1/2}(2\pi i )^{mn} p(\check{\chi},\Phi_{s,\chi})p((\eta\circ N_{\AF/\AK})^{c,-1},\Phi_{s,\chi})\\
&\sim_{E(\chi)E(\eta)} & D_{F^{+}}^{1/2}(2\pi i )^{mn} p(\check{\chi},\Phi_{s,\chi})p(\check{\eta}\circ N_{\AF/\AK},\Phi_{s,\chi})\end{array}.
$$

Still by Proposition \ref{propCM}, we have: $$\begin{array}{lcl}
& & p(\check{\eta}\circ N_{\AF/\AK},\Phi_{s,\chi})\\
 &\sim_{E(\eta)}& \prod\limits_{1\leq i \leq n, \sigma_{i}\in \Phi_{s,\chi}}p(\check{\eta}\circ N_{\AF/\AK},\sigma_{i})\prod\limits_{1\leq i \leq n, \bar{\sigma}_{i}\in \Phi_{s,\chi}}p(\check{\eta}\circ N_{\AF/\AK},\bar{\sigma}_{i})\\
  &\sim_{E(\eta)}& \prod\limits_{1\leq i \leq n, \sigma_{i}\in \Phi_{s,\chi}}p(\check{\eta} ,\sigma_{i}|_{K})\prod\limits_{1\leq i \leq n, \bar{\sigma}_{i}\in \Phi_{s,\chi}}p(\check{\eta},\bar{\sigma}_{i}|_{K})\\

&\sim _{E(\eta)}& p(\check{\eta},1)^{s} p(\check{\eta},\iota)^{n-s}
\end{array}$$
equivariant under action of $G_{K}$.

We then get
\begin{equation}
L(m,\chi\otimes \eta\circ N_{\AF/\AK})
\sim_{E(\chi)E(\eta)} D_{F^{+}}^{1/2}(2\pi i )^{mn}p(\check{\chi},\Phi_{s,\chi})p(\check{\eta},1)^{s} p(\check{\eta},\iota)^{n-s}
\end{equation}
equivariant under action of $G_{K}$ and with both sides non zero.

Comparing this formula with (\ref{mainequation}) and (\ref{LfunctionAI}), we deduce at last that equivariant under action of $G_{K}$:
$$P^{(s)}(\Pi(\chi))\sim_{E(\chi)E(\eta)} D_{F^{+}}^{1/2}\mathcal{G}(\varepsilon_{K})^{-[\frac{n}{2}]}p(\check{\chi},\Phi_{s,\chi})$$

Varying $\eta$, we can replace the field $E(\chi)E(\eta)$ by $E(\chi)$ which is easy to see from Lemma \ref{anotherdef}. We obtain at last:

\begin{thm}
Let $n$ be an odd integer. Let $F=F^{+}K$ with $F^{+}$ a totally real field of degree $n$ over $\Q$ and $K$ a quadratic imaginary field as before. Assume that $F$ is cyclic over $K$. Let $\chi$ be a regular conjugate self-dual algebraic Hecke character of $F$ satisfying Hypothesis \ref{hypsupercuspidal}. Put $\Pi=\Pi(\chi)$ the automorphic induction of $\chi$ from $GL_{1}(\AF)$ to $GL_{n}(\AK)$. The automorphic period of $\Pi$ defined in section \ref{automorphicperiod} satisfies:

\begin{equation}\label{oddfinal}
P^{(s)}(\Pi) \sim_{E(\chi)} D_{F^{+}}^{1/2}\mathcal{G}(\varepsilon_{K})^{-[\frac{n}{2}]}p(\check{\chi},\Phi_{s,\chi})
\end{equation}
equivariant under action of $G_{K}$.
\end{thm}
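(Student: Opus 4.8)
The plan is to evaluate the special value $L(m,\Pi(\chi)\otimes\eta)$ in two ways and compare, following the thread begun in Section~\ref{etaandm}. First I would record that $\Pi=\Pi(\chi)$ satisfies every hypothesis of Theorem~\ref{maintheorem}: it is cuspidal because $\chi\neq\chi^{\tau}$ for all non-trivial $\tau\in Gal(F/K)$, it is algebraic and cohomological because $n$ is odd, it is conjugate self-dual because $\chi$ is, and it is supercuspidal at the places over $q$ by Hypothesis~\ref{hypsupercuspidal}; its infinity type is $(z^{a_{i}}\overline{z}^{-a_{i}})_{1\leq i\leq n}$. Then, for the fixed integer $0\leq s\leq n$, I would invoke the last part of Section~\ref{etaandm} to produce an algebraic Hecke character $\eta$ of $K$ and an integer $m$ with $a-b+2a_{i}\neq 0$ for all $i$, with $s=\#\{i\mid a-b+2a_{i}<0\}$, with $m+\frac{n-1}{2}\geq\frac{n-a-b}{2}$ critical for $M(\Pi)\otimes M(\eta)$, and such that (\ref{maineqn}) holds with both sides non-zero.

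The first evaluation is just (\ref{maineqn}): up to $\sim_{E(\chi)E(\eta);K}$ and equivariantly under $G_{K}$, it writes $L(m,\Pi(\chi)\otimes\eta)$ as $(2\pi i)^{mn}\mathcal{G}(\varepsilon_{K})^{[\frac{n}{2}]}P^{(s)}(\Pi(\chi))\,p(\check{\eta},1)^{s}p(\check{\eta},\iota)^{n-s}$. For the second, automorphic induction gives $L(m,\Pi(\chi)\otimes\eta)=L(m,\chi\otimes\eta\circ N_{\AF/\AK})$; I would check that $\chi\otimes\eta\circ N_{\AF/\AK}$ is motivic and critical, with compatible CM type $\Phi_{s,\chi}$, and that $m$ is critical for it, the point being that the critical range (\ref{criticalvalue1}) for $\chi\otimes\eta\circ N_{\AF/\AK}$ coincides, after the shift by $\frac{n-1}{2}$, with the range (\ref{criticalvalue2}) for $M(\Pi(\chi))\otimes M(\eta)$. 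Then Blasius' Theorem~\ref{blasius} applies and gives $L(m,\chi\otimes\eta\circ N_{\AF/\AK})\sim_{E(\chi)E(\eta)}D_{F^{+}}^{1/2}(2\pi i)^{mn}p(\check{\chi}\,(\check{\eta}\circ N_{\AF/\AK}),\Phi_{s,\chi})$, equivariantly under $G_{K}$.

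Next I would peel off the $\eta$-contribution from this last CM period. Using multiplicativity in the character variable (Proposition~\ref{propCM}, (\ref{charmulti})), the splitting (\ref{separateCMtype}), and the norm relation of Proposition~\ref{propCM}, together with the fact that exactly $s$ of the embeddings in $\Phi_{s,\chi}$ are trivial on $K$ while the other $n-s$ restrict to the complex conjugation of $K$, one obtains $p(\check{\chi}\,(\check{\eta}\circ N_{\AF/\AK}),\Phi_{s,\chi})\sim_{E(\chi)E(\eta)}p(\check{\chi},\Phi_{s,\chi})\,p(\check{\eta},1)^{s}p(\check{\eta},\iota)^{n-s}$, equivariantly under $G_{K}$. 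Comparing with the first evaluation and using that both sides of (\ref{maineqn}) are non-zero --- so that $\sim$ may be composed transitively here (Remark~\ref{transitive}) --- I would cancel the non-zero common factors $(2\pi i)^{mn}$ and $p(\check{\eta},1)^{s}p(\check{\eta},\iota)^{n-s}$ to reach $P^{(s)}(\Pi(\chi))\sim_{E(\chi)E(\eta)}D_{F^{+}}^{1/2}\mathcal{G}(\varepsilon_{K})^{-[\frac{n}{2}]}p(\check{\chi},\Phi_{s,\chi})$, equivariantly under $G_{K}$. Since neither side depends on $\eta$, letting $\eta$ vary and applying Lemma~\ref{anotherdef} replaces the coefficient field $E(\chi)E(\eta)$ by $E(\chi)$, which is (\ref{oddfinal}).

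The step I expect to be the genuine obstacle is not the period bookkeeping above but securing the input imported from Section~\ref{etaandm}: one needs that for \emph{every} $s$ there is a choice of $\eta$, with the sign pattern of $a-b+2a_{i}$ prescribing that value of $s$, and an $m$ in the admissible range, for which $L(m,\Pi(\chi)\otimes\eta)\neq 0$. Without this non-vanishing the comparison of $\sim$-relations is vacuous, since $\sim$ is not transitive in general. This is precisely what Proposition~\ref{nonvanishing} supplies, and it rests in turn on the non-vanishing theorems of \cite{harrisunitaryperiod}; tracking which period $P^{(s)}$ actually appears for a given $\eta$, via the index $s=s(\eta^{c},\Pi^{\vee})$ of Theorem~\ref{maintheorem}, is the accompanying delicate point.
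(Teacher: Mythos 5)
Your proposal matches the paper's proof essentially step by step: verifying that $\Pi(\chi)$ satisfies the hypotheses of Theorem~\ref{maintheorem}, choosing $\eta$ and $m$ via Section~\ref{etaandm} and Proposition~\ref{nonvanishing} so that (\ref{maineqn}) holds with both sides nonzero, comparing with the Blasius evaluation of $L(m,\chi\otimes\eta\circ N_{\AF/\AK})$, peeling off the $\eta$-factor by Proposition~\ref{propCM}, cancelling, and finally varying $\eta$ to descend the coefficient field. You also correctly flag the non-vanishing needed to make the cancellation legitimate as the crux; this is indeed what the paper leans on Proposition~\ref{nonvanishing} for.
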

\end{subsection}

\begin{subsection}{Even dimensional case}\label{evendim}

Now let $n$ be an even positive integer.  Let $F=F^{+}K$ with $F^{+}$ a totally real field of degree $n$ over $\Q$. We still assume $F$ is cyclic over $K$.

Let $\chi$ be an algebraic conjugate self-dual Hecke character of $F$ with infinity type $\chi_{\infty}(z)=\prod\limits_{i=1}^{n}\sigma_{i}\left(\cfrac{z}{\overline{z}}\right)^{a_{i}}$, $a_{i}\in \Z$. We assume also $\chi \neq \chi^{\tau}$ for all $1\neq \tau\in Gal(F/K)$ and the Hypothesis \ref{hypsupercuspidal}

 It follows that $\Pi(\chi)$, the automorphic induction of $\chi$ from $GL_{1}(\AF)$ to $GL_{n}(\AK)$, is still cuspidal conjugate self-dual. It has infinity type $(z^{a_{i}}\overline{z}^{-a_{i}})_{1\leq i\leq n}$. Since $n$ is even, $\Pi(\chi)$ is no longer algebraic. We then consider $\Pi(\chi)\otimes ||\cdot||_{\AK}^{-\frac{1}{2}}$. It is algebraic but not conjugate self-dual.

 By Lemma \ref{psi}, we may take $\psi$ an algebraic Hecke character of $\Q$ such that $\psi\psi^{c}=||\cdot||_{\AK}$ with infinity type $z^{1}\overline{z}^{0}$ and unramified at all places over $q$. We define $\Pi:=\Pi(\chi)\otimes ||\cdot||_{\AK}^{-\frac{1}{2}}\psi$. The infinity type of $\Pi$ is $(z^{a_{i}+\frac{1}{2}}\overline{z}^{-a_{i}-\frac{1}{2}})_{1\leq i\leq n}$. It is a regular, cohomological, conjugate self-dual and supercuspidal at places dividing $q$. In one word, $\Pi$ satisfies all the conditions in Theorem \ref{maintheorem}. We may assume $E(\Pi)=E(\chi)E(\psi)$ up to finite extension.

Let $\eta$ and $m$ be as in the last part of Section \ref{etaandm}. We have (see equation (\ref{maineqn})):

\begin{equation}\label{mainequation2}
L(m+\frac{1}{2}, \Pi\otimes \eta)
\sim_{E(\chi)E(\psi)E(\eta)}(2\pi i)^{mn+\frac{n}{2}}\mathcal{G}(\varepsilon_K)^{[\frac{n}{2}]}P^{(s)}(\Pi)
p(\check{\eta},1)^{s}p(\check{\eta},\iota)^{n-s}\end{equation}
equivariant under action of $G_{K}$ and with both sides non zero.

On the other hand, we have
\begin{eqnarray}\label{LfunctionAI2}
L(m+\frac{1}{2},\Pi\otimes \eta) &=& L(m+\frac{1}{2},\Pi(\chi)\otimes ||\cdot||_{\AF}^{-\frac{1}{2}}\psi\eta) \nonumber\\
&=&L(m,\Pi(\chi)\otimes \psi\eta)\nonumber \\
&=&L(m,\chi\otimes ((\psi\eta)\circ N_{\AF/\AK})).
\end{eqnarray}

Note that the character $\chi\otimes ((\psi\eta)\circ N_{\AF/\AK})$ has infinity type $\prod\limits_{1\leq i\leq n} \sigma_{i}^{a+a_{i}+1}\bar{\sigma}_{i}^{b-a_{i}}$. It is critical under the assumption that $a-b+2a_{i}+1\neq 0$ for all $i$.

We define $\Phi_{s,\chi}$ a CM type of $F$ by $\sigma_{i}\in \Phi_{s,\chi}$ if $a-b+2a_{i}+1<0$; $\bar{\sigma}_{i}\in \Phi_{s,\chi}$ if $a-b+2a_{i}+1>0$. It is compatible with $\chi\otimes ((\psi\eta)\circ N_{\AF/\AK})$. It depends only on $s$ and $\chi$. Moreover, $\Phi_{s,\chi}\cap\{\sigma_{i},1\leq i \leq n\}$ has $s$ elements. We refer to the previous section for an explanation of this fact.

We now apply Blasius's result and get:
$$L(m,\chi\otimes ((\psi\eta)\circ N_{\AF/\AK}) \sim_{E(\chi)E(\psi)E(\eta)}
 D_{F^{+}}^{1/2}(2\pi i)^{mn} p((\chi\otimes ((\psi\eta)\circ N_{\AF/\AK}))^{c,-1},\Phi_{s,\chi})$$
 equivariant under action of $G_{K}$.

Apply Proposition \ref{propCM}, $$
L(m,\chi\otimes ((\psi\eta)\circ N_{\AF/\AK}))
\sim_{E(\chi)E(\psi)E(\eta)}$$
$$
D_{F^{+}}^{1/2}(2\pi i )^{mn} p(\check{\chi},\Phi_{s,\chi})p(\check{\psi}\circ N_{\AF/\AK},\Phi_{s,\chi})p(\check{\eta}\circ N_{\AF/\AK},\Phi_{s,\chi})
$$
equivariant under action of $G_{K}$.

By Proposition \ref{propCM} and the same calculation as in the previous section, we have $$\begin{array}{lcl}
& & p(\check{\psi}\circ N_{\AF/\AK},\Phi_{s,\chi})\sim _{E(\eta)} p(\check{\psi},1)^{s} p(\check{\psi},\iota)^{n-s}\\
&\text{ and }& p(\check{\eta}\circ N_{\AF/\AK},\Phi_{s,\chi})\sim _{E(\eta)} p(\check{\eta},1)^{s} p(\check{\eta},\iota)^{n-s}
\end{array}$$
equivariant under action of $G_{K}$.

We then get
\begin{eqnarray}
&L(m,\chi\otimes ((\psi\eta)\circ N_{\AF/\AK})) \sim_{E(\chi)E(\psi)E(\eta)} &\nonumber\\
&D_{F^{+}}^{1/2}(2\pi i )^{mn}p(\check{\chi},\Phi_{s,\chi})p(\check{\psi},1)^{s} p(\check{\psi},\iota)^{n-s}p(\check{\eta},1)^{s} p(\check{\eta},\iota)^{n-s}&\end{eqnarray}
equivariant under action of $G_{K}$.

Comparing this formula with 	(\ref{mainequation2}) and (\ref{LfunctionAI2}), we deduce at last that equivariant under action of $G_{K}$:
$$
P^{(s)}(\Pi(\chi))
\sim_{E(\chi)E(\psi)E(\eta)} D_{F^{+}}^{1/2}(2\pi i)^{-\frac{n}{2}}\mathcal{G}(\varepsilon_{K})^{-[\frac{n}{2}]}p(\check{\chi},\Phi_{s,\chi})p(\check{\psi},1)^{s} p(\check{\psi},\iota)^{n-s}
$$

Note $p(\check{\psi},1)$ simply by $p(\psi)$ and then $p(\check{\psi},\iota)=p(\check(\psi^{c}),1)=p(\psi^{c})$. Varying $\eta$, we obtain at last:

\begin{thm}
Let $n$ be an even integer. Let $F=F^{+}K$ with $F^{+}$ totally real of degree $n$ over $\Q$ and $K$ a quadratic imaginary field as before. Assume that $F$ is cyclic over $K$. Let $\chi$ be a regular conjugate self-dual algebraic Hecke character of $F$. Put $\Pi=\Pi(\chi)\otimes||\cdot||_{K}^{-\frac{1}{2}}\psi$ where $\psi$ is a Hecke character of $K$ such that $\psi\psi^{c}=||\cdot||_{\AK}$ and $\psi_{\infty}(z)=z$. The automorphic period of $\Pi$ defined in section \ref{automorphicperiod} then satisfies:

\begin{equation}\label{evenfinal}
P^{(s)}(\Pi) \sim_{E(\chi)E(\psi)} D_{F^{+}}^{1/2}(2\pi i)^{-\frac{n}{2}}\mathcal{G}(\varepsilon_{K})^{-[\frac{n}{2}]}p(\check{\chi},\Phi_{s,\chi})p(\psi)^{s}p(\psi^{c})^{n-s}
\end{equation}
equivariant under action of $G_{K}$.
\end{thm}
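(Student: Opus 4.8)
The strategy is the one already carried out for odd $n$ in the previous subsection, the only change being that the character $\psi$ is inserted to repair algebraicity. First I would check that $\Pi:=\Pi(\chi)\otimes\|\cdot\|_{\AK}^{-1/2}\psi$ satisfies all the hypotheses of Theorem \ref{maintheorem}: cuspidality of $\Pi(\chi)$ comes from $\chi\neq\chi^{\tau}$ for every nontrivial $\tau\in Gal(F/K)$ (\cite{arthurclozel}); regularity is immediate since the $a_i$ are pairwise distinct; conjugate self-duality follows from that of $\Pi(\chi)$ together with $(\|\cdot\|_{\AK}^{-1/2}\psi)^c=\|\cdot\|_{\AK}^{-1/2}\psi^c=\|\cdot\|_{\AK}^{1/2}\psi^{-1}=(\|\cdot\|_{\AK}^{-1/2}\psi)^{-1}$, using $\psi\psi^c=\|\cdot\|_{\AK}$; cohomology, hence algebraicity, holds because the twist shifts the infinity type of $\Pi(\chi)$ from $(z^{a_i}\bar z^{-a_i})$ to $(z^{a_i+1/2}\bar z^{-a_i-1/2})$, whose exponents lie in $\Z+\tfrac{n-1}{2}$ since $n$ is even; and supercuspidality at the places over $q$ follows from Hypothesis \ref{hypsupercuspidal} and the unramifiedness of $\psi$ there.

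With this in hand, for the fixed $s$ I would invoke the corollary at the end of Section \ref{etaandm} (equation (\ref{maineqn})), which itself rests on Proposition \ref{nonvanishing}, to produce an algebraic Hecke character $\eta$ of $K$ with infinity type $z^a\bar z^b$ and a value $m$ such that $a-b+2a_i+1\neq 0$ for all $i$, $s=\#\{i:a-b+2a_i+1<0\}$, $m+\tfrac12$ is critical for $M(\Pi\otimes\eta)$, and both sides of
\[
L(m+\tfrac12,\Pi\otimes\eta)\sim_{E(\chi)E(\psi)E(\eta);K}(2\pi i)^{mn+n/2}\mathcal{G}(\varepsilon_K)^{[n/2]}P^{(s)}(\Pi)\,p(\check\eta,1)^s p(\check\eta,\iota)^{n-s}
\]
are nonzero, the relation being equivariant under $G_K$. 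Then I would unwind the left side: by the definition of $\Pi$ and the automorphic induction, $L(m+\tfrac12,\Pi\otimes\eta)=L(m,\Pi(\chi)\otimes\psi\eta)=L(m,\chi\otimes((\psi\eta)\circ N_{\AF/\AK}))$, and comparing the critical-value inequality (\ref{criticalvalue2}) for $\Pi\otimes\eta$ with Blasius's inequality (\ref{criticalvalue1}) shows $m$ is critical for the Hecke character $\chi\otimes((\psi\eta)\circ N_{\AF/\AK})$ of $F$, whose compatible CM type is $\Phi_{s,\chi}$ (defined by the sign of $a-b+2a_i+1$); one checks, as in the odd case, that exactly $s$ of its elements are trivial on $K$ and that the type is independent of $\eta$.

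The core of the argument is then to apply Theorem \ref{blasius} to obtain $L(m,\chi\otimes((\psi\eta)\circ N_{\AF/\AK}))\sim_{E(\chi)E(\psi)E(\eta)}D_{F^{+}}^{1/2}(2\pi i)^{mn}p((\chi\otimes((\psi\eta)\circ N_{\AF/\AK}))^{c,-1},\Phi_{s,\chi})$; to use multiplicativity of CM periods (Proposition \ref{propCM}) to split this into $p(\check\chi,\Phi_{s,\chi})\,p(\check\psi\circ N_{\AF/\AK},\Phi_{s,\chi})\,p(\check\eta\circ N_{\AF/\AK},\Phi_{s,\chi})$; and to use the norm-descent formula of Proposition \ref{propCM} together with the count of embeddings trivial on $K$ to get $p(\check\psi\circ N_{\AF/\AK},\Phi_{s,\chi})\sim p(\check\psi,1)^s p(\check\psi,\iota)^{n-s}$ and likewise for $\eta$. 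Equating the two expressions for $L(m,\chi\otimes((\psi\eta)\circ N_{\AF/\AK}))$, cancelling the nonzero factors $(2\pi i)^{mn}$ and $p(\check\eta,1)^s p(\check\eta,\iota)^{n-s}$, and using the nonvanishing to make $\sim$ transitive (Remark \ref{transitive}), I would get $P^{(s)}(\Pi)\sim_{E(\chi)E(\psi)E(\eta)}D_{F^{+}}^{1/2}(2\pi i)^{-n/2}\mathcal{G}(\varepsilon_K)^{-[n/2]}p(\check\chi,\Phi_{s,\chi})\,p(\check\psi,1)^s p(\check\psi,\iota)^{n-s}$; varying $\eta$ and applying Lemma \ref{anotherdef} eliminates $E(\eta)$, and rewriting $p(\psi)=p(\check\psi,1)$ and $p(\psi^c)=p(\check{\psi^c},1)=p(\check\psi,\iota)$ (by the conjugation formula in Proposition \ref{propCM}) yields the stated relation.

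The main obstacle, as in the odd-dimensional case, is not any single deep input but the bookkeeping: keeping the coefficient field of each $\sim$ correct, propagating $G_K$-equivariance through every identity, tracking the half-integer shifts introduced by $\psi$ and $\|\cdot\|_{\AK}^{-1/2}$ both in the infinity types and in the $L$-arguments, and — most importantly — maintaining the nonvanishing of the relevant $L$-value throughout, since without it the relation $\sim$ is not transitive and the final cancellation of the $\eta$-periods would be unjustified.
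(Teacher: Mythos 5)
Your proposal matches the paper's argument step for step: verify that $\Pi(\chi)\otimes\|\cdot\|_{\AK}^{-1/2}\psi$ satisfies the hypotheses of Theorem \ref{maintheorem} (using unramifiedness of $\psi$ over $q$ for the supercuspidality and the half-integer shift for algebraicity), invoke equation (\ref{maineqn}), unwind $L(m+\tfrac12,\Pi\otimes\eta)=L(m,\chi\otimes((\psi\eta)\circ N_{\AF/\AK}))$, apply Blasius and Proposition \ref{propCM} to split the CM period into the $\chi$-, $\psi$-, and $\eta$-factors, cancel using nonvanishing, and vary $\eta$ to eliminate $E(\eta)$. This is exactly the route the paper takes in Section \ref{evendim}.
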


\begin{rem}
Notice that $M(\Pi)(\frac{n}{2}) =M(\chi)\otimes M(\psi)$. The above result is compatible with M. Harris's calculation on relations of periods by twisting of Hecke characters (see section $2.9$ of \cite{harris97} equation $(2.9.15)$).
\end{rem}

\end{subsection}
\end{section}

\begin{section}{Application: simplification of Archimedean local factors}
We can now refine the Archimedean local factors in \ref{maintheorem2} first in the case $\Pi$ and $\Pi'$ come from a Hecke character and then for general $\Pi$ and $\Pi'$.

Let $F^{+}$ and $F'^{+}$ be two totally real fields of degree $n$ and $n-1$ respectively over $\Q$ and set $F=KF^{+}$, $F'=KF'^{+}$ where $K$ is always a quadratic imaginary field. We assume that $F$ and $F'$ are cyclic over $K$. Put $L=FF'$. It is easy to see that $L$ is a CM field of degree $n(n-1)$ over $K$.

Recall that $\sigma_{1},\sigma_{2},\cdots,\sigma_{n}$ are the complex embeddings of $F$ which are trivial on $K$. Similarly, we denote by $\sigma'_{1},\sigma'_{2},\cdots,\sigma'_{n-1}$ the complex embeddings of $F$ which are trivial on $K$. For every $(j,k)$ with $1\leq j\leq n$ and $1\leq k\leq n-1$, we define $\tau_{jk}$ a complex embedding of $L=FF'$ such that $\tau_{jk}|_{F}=\sigma_{j}$ and $\tau_{jk}|_{F'}=\sigma'_{k}$. Hence $\{\tau_{jk}|1\leq j\leq n, 1\leq k\leq n-1\}$ are all the complex embeddings of $L$ which are trivial on $K$.

We take $\chi$ and $\chi'$ two algebraic regular conjugate self-dual Hecke characters of $F$ and $F'$.  We assume both $\chi$ and $\chi'$ satisfy the Hypothesis \ref{hypsupercuspidal}. Denote the infinity type of $\chi$ by $\prod\limits_{1\leq j\leq n}\sigma_{j}^{a_{j}}\overline{\sigma}_{j}^{-a_{j}}$ with $a_{1}>a_{2}>\cdots >a_{n}$ and the infinity type of $\chi'$ by  $\prod\limits_{1\leq k\leq n-1} \sigma_{k}'^{a_{k}'}\overline{\sigma_{k}'}^{-a_{k}'}$ with $a_{1}'>a_{2}'>\cdots>a_{n}'$. We assume $\chi$ and $\chi'$ are \textbf{very regular} which means that $a_{j}-a_{j+1}\geq 3$ for all $j$ and $a_{k}'-a_{k+1}'\geq 3$ for all $k$.

We define $\Pi:=\Pi(\chi)$ and $\Pi':=\Pi(\chi')\otimes ||\cdot||_{\AK}^{-\frac{1}{2}}\psi$ if $n$ is odd; $\Pi:=\Pi(\chi)\otimes ||\cdot||_{\AK}^{-\frac{1}{2}}\psi$ and $\Pi':=\Pi(\chi')$ if $n$ is even where $\psi$ is a character of $K$ defined in Lemma \ref{psi}.  We assume moreover $(\Pi,\Pi')$ satisfies the Hypothesis \ref{hyp}. We can then apply Theorem \ref{maintheorem2} to $(\Pi,\Pi')$.

\begin{subsection}{The case $n$ odd}

Let us assume firstly that $n$ is odd. In this case, the infinity type of $\Pi$ is $(z^{a_{j}}\overline{z}^{-a_{j}})_{1\leq j\leq n}$ and the infinity type of $\Pi'$ is $(z^{a_{k}+\frac{1}{2}}\overline{z}^{-a_{k}-\frac{1}{2}})_{1\leq k\leq n-1}$. Hypothesis \ref{hyp} becomes:
\begin{hyp}\label{hypdoubleodd}
$$-a_{n}>a_{1}'\geq -a_{n-1}>a_{2}' \geq -a_{n-3}>a_{3}' \cdots \geq -a_{2}>a_{n-1}'\geq -a_{1}$$
\end{hyp}
Under this hypothesis, we can apply Theorem \ref{maintheorem2} to $(\Pi,\Pi')$. We get, for critical point $s=m+\frac{1}{2}$ of $L(s,\Pi\times \Pi')$ with $m\geq 0$,
$$L(m+\frac{1}{2},\Pi\times \Pi')\sim_{E(\Pi)E(\Pi')} p(m,\Pi_{\infty},\Pi'_{\infty})Z(\Pi_{\infty})Z(\Pi'_{\infty})\prod\limits_{j=1}^{n-1}P^{(j)}(\Pi)\prod\limits_{k=1}^{n-2}P^{(k)}(\Pi')$$
equivariant under action of $G_{K}$.

By equation (\ref{oddfinal}) and (\ref{evenfinal}), for all $1\leq j\leq n-1$ and $1\leq k\leq n-2$
$$P^{(j)}(\Pi)\sim_{E(\chi)} D_{F^{+}}^{1/2}\mathcal{G}(\varepsilon_{K})^{-\frac{n-1}{2}}p(\check{\chi},\Phi_{j,\chi});$$
$$P^{(k)}(\Pi') \sim_{E(\chi')E(\psi)} D_{F'^{+}}^{1/2}(2\pi i)^{-\frac{n-1}{2}}\mathcal{G}(\varepsilon_{K})^{-\frac{n-1}{2}}p(\check{\chi}',\Phi_{k,\chi'})p(\psi)^{k}p(\psi^{c})^{n-1-k}$$
equivariant under action of $G_{K}$.

Note that $\Phi_{j,\chi}=\{\bar{\sigma}_{1},\bar{\sigma}_{2},\cdots,\bar{\sigma}_{n-j},\sigma_{n-j+1},\cdots,\sigma_{n-1},\sigma_{n}\}$. We have
$$p(\check{\chi},\Phi_{j,\chi})\sim_{E(\chi)}\prod\limits_{1\leq i\leq n-j}p(\check{\chi},\{\bar{\sigma}_{i}\})\prod\limits_{n-j+1\leq i\leq n}p(\check{\chi},\{\sigma_{i}\})$$
equivariant under action of $G_{K}$ by equation (\ref{separateCMtype}) in Proposition \ref{propCM}.

Similarly, we have that equivariant under action of $G_{K}$: $$p(\check{\chi}',\Phi_{k,\chi'})\sim_{E(\chi')}\prod\limits_{1\leq i\leq n-k-1}p(\check{\chi}',\{\bar{\sigma'}_{i}\})\prod\limits_{n-k\leq i\leq n-1}p(\check{\chi}',\{\sigma'_{i}\}).$$

Therefore, we deduce that for critical point $s=m+n-1$ of $M(\Pi)\otimes M(\Pi')$ with $m\geq 0$,
\begin{eqnarray}\label{double1}
&L(m+\frac{1}{2},\Pi\times \Pi')\sim_{E(\chi)E(\chi')E(\psi)} &\nonumber\\
& D_{F^{+}}^{\frac{n-1}{2}}D_{F'^{+}}^{\frac{n-2}{2}}\mathcal{G}(\varepsilon_{K})^{-\frac{(2n-3)(n-1)}{2}}p(m,\Pi_{\infty},\Pi'_{\infty})Z(\Pi_{\infty})Z(\Pi'_{\infty})\times&\nonumber\\
&(2\pi i)^{-\frac{(n-1)(n-2)}{2}}[p(\psi)p(\psi^{c})]^{\frac{(n-1)(n-2)}{2}}\prod\limits_{j=1}^{n}p(\check{\chi},\sigma_{j})^{j-1}p(\check{\chi},\bar{\sigma}_{j})^{n-j}
\prod\limits_{k=1}^{n-1}p(\check{\chi}',\sigma_{k})^{k-1}p(\check{\chi}',\bar{\sigma'_{k}})^{n-1-k}& \nonumber\\
&&
\end{eqnarray}
equivariant under action of $G_{K}$.

Since $\psi\psi^{c}=||\cdot||_{\AK}$, we have
$$p(\psi)p(\psi^{c})=p(\check{\psi},1)p(\check{\psi^{c}},1)\sim_{E(\psi)} p(||\cdot||^{-1}_{\AK},1)\sim _{E(\psi)}2\pi i$$
equivariant under action of $G_{K}$ by equation (\ref{charmulti}) and (\ref{norm}).

Thus, equation (\ref{double1}) becomes
\begin{eqnarray}\label{double2}
&L(m+\frac{1}{2},\Pi\times \Pi')&\sim_{E(\chi)E(\chi')E(\psi)}
D_{F^{+}}^{\frac{n-1}{2}}D_{F'^{+}}^{\frac{n-2}{2}}\mathcal{G}(\varepsilon_{K})^{-\frac{(2n-3)(n-1)}{2}}p(m,\Pi_{\infty},\Pi'_{\infty})Z(\Pi_{\infty})Z(\Pi'_{\infty})\nonumber\\
 &&\times\prod\limits_{j=1}^{n}p(\check{\chi},\sigma_{j})^{j-1}p(\check{\chi},\bar{\sigma}_{j})^{n-j}
\prod\limits_{k=1}^{n-1}p(\check{\chi}',\sigma_{k})^{k-1}p(\check{\chi}',\bar{\sigma'_{k}})^{n-1-k}.
\end{eqnarray}
equivariant under action of $G_{K}$.

On the other hand, we have
\begin{eqnarray}
& &L(m+\frac{1}{2},\Pi\times \Pi')\nonumber \\
&=& L(m+\frac{1}{2}, \Pi(\chi)\times  \Pi(\chi')\otimes ||\cdot||_{\AK}^{-\frac{1}{2}} \psi) \nonumber\\
&=& L(m, \Pi(\chi)\times  \Pi(\chi')\otimes\psi) \nonumber\\
&=& L(m,(\chi\circ N_{\AL/\AF})(\chi'\circ N_{\AL/\AFF}) (\psi\circ N_{\AL/\AK})).
\end{eqnarray}

The infinity type of $(\chi\circ N_{\AL/\AF})(\chi'\circ N_{\AL/\AFF}) (\psi\circ N_{\AL/\AK})$ is $(z^{a_{j}+a_{k}'+1}\overline{z}^{-a_{j}-a_{k}'})_{1\leq j\leq n,1\leq k\leq n-1}$. It is always critical, i.e. $a_{j}+a_{k}'+1\neq -a_{j}-a_{k}'$ for all $j,k$. Hence there exists $\Phi_{\chi,\chi'}$ a unique CM type of $L$ compatible with $(\chi\circ N_{\AL/\AF})(\chi'\circ N_{\AL/\AFF}) (\psi\circ N_{\AL/\AK})$. More precisely, $\tau_{j,k}\in \Phi_{\chi,\chi'}$ if and only if $a_{j}+a_{k}'<0$. By Hypothesis \ref{hypdoubleodd}, it is equivalent to the condition that $j+k\geq n+1$. similarly, $\overline{\tau}_{j,k}\in \Phi_{\chi,\chi'}$ if and only if $j+k\leq n$.

Therefore, by Blasius' result,
\begin{eqnarray}\label{blasiusdouble}
& &L(m+\frac{1}{2},\Pi\times \Pi')\nonumber \\
& \sim_{E(\chi)E(\chi')E(\psi)}& D_{F^{+}F'^{+}}^{1/2}(2\pi i)^{mn(n-1)} p((\check{\chi}\circ N_{\AL/\AF})(\check{\chi}'\circ N_{\AL/\AFF}) (\check{\psi}\circ N_{\AL/\AK}),\Phi_{\chi,\chi'})\nonumber \\
& \sim_{E(\chi)E(\chi')E(\psi)}& (D_{F^{+}})^{\frac{n-1}{2}}(D_{F'^{+}})^{\frac{n}{2}}(2\pi i)^{mn(n-1)} p((\check{\chi}\circ N_{\AL/\AF})(\check{\chi}'\circ N_{\AL/\AFF}) (\check{\psi}\circ N_{\AL/\AK}),\Phi_{\chi,\chi'})\nonumber\\
&&
\end{eqnarray}
equivariant under action of $G_{K}$.
The last equation is due to $D_{F^{+}F'^{+}}^{1/2}\sim_{\Q} (D_{F^{+}})^{\frac{n-1}{2}}(D_{F'^{+}})^{\frac{n}{2}}$ by Lemma \ref{lemmadiscriminant}.

Moreover, by Proposition \ref{propCM},
\begin{eqnarray}
& &p((\check{\chi}\circ N_{\AL/\AF})(\check{\chi}'\circ N_{\AL/\AFF}) (\check{\psi}\circ N_{\AL/\AK}),\Phi_{\chi,\chi'}) \nonumber \\
&\sim_{E(\chi)E(\chi')E(\psi)}&\prod\limits_{j+k\geq n+1} p((\check{\chi}\circ N_{\AL/\AF})(\check{\chi}'\circ N_{\AL/\AFF}) (\check{\psi}\circ N_{\AL/\AK}),\tau_{j,k})\times \nonumber\\
& &\prod\limits_{j+k\leq n} p((\check{\chi}\circ N_{\AL/\AF})(\check{\chi}'\circ N_{\AL/\AFF}) (\check{\psi}\circ N_{\AL/\AK}),\overline{\tau}_{j,k})\nonumber \\
&\sim_{E(\chi)E(\chi')E(\psi)}&  \prod\limits_{j+k\geq n+1} p(\check{\chi},\tau_{j,k}|_{\AF})p(\check{\chi}',\tau_{j,k}|_{\AFF})p(\check{\psi},\tau_{j,k}|_{\AK})\times \nonumber\\
& &\prod\limits_{j+k\leq n} p(\check{\chi},\overline{\tau}_{j,k}|_{\AF})p(\check{\chi}',\overline{\tau}_{j,k}|_{\AFF})p(\check{\psi},\overline{\tau}_{j,k}|_{\AK})\nonumber \\
&\sim_{E(\chi)E(\chi')E(\psi)}&  \prod\limits_{j+k\geq n+1} p(\check{\chi},\sigma_{j})p(\check{\chi}',\sigma_{k}')p(\check{\psi},1)\times\prod\limits_{j+k\leq n} p(\check{\chi},\overline{\sigma}_{j})p(\check{\chi}',\overline{\sigma}_{k})p(\check{\psi},\iota)\nonumber\\
&\sim_{E(\chi)E(\chi')E(\psi)}& \prod\limits_{j=1}^{n}p(\check{\chi},\sigma_{j})^{j-1}p(\check{\chi}',\overline{\sigma}_{j})^{n-j} \times\prod\limits_{k=1}^{n-1}p(\check{\chi}',\sigma_{k}')^{k}p(\check{\chi}',\overline{\sigma}_{k}')^{n-k}\times\nonumber\\
&&[p(\check{\psi},1)p(\check{\psi},\iota)]^{\frac{n(n-1)}{2}}\nonumber\\
&\sim_{E(\chi)E(\chi')E(\psi)}& (2\pi i)^{\frac{n(n-1)}{2}} \prod\limits_{j=1}^{n}p(\check{\chi},\sigma_{j})^{j-1}p(\check{\chi}',\overline{\sigma}_{j})^{n-j} \times\prod\limits_{k=1}^{n-1}p(\check{\chi}',\sigma_{k}')^{k}p(\check{\chi}',\overline{\sigma}_{k}')^{n-k}\nonumber
\end{eqnarray}
equivariant under action of $G_{K}$.

We assume moreover $L(m+\frac{1}{2},\Pi\times \Pi')\neq 0$. It is always true when $m\geq 1$ since in this case $m$ is in the absolutely convergent range. We compare the above equation with equation (\ref{double2}) and (\ref{blasiusdouble}) and get:
$$p(m,\Pi_{\infty},\Pi'_{\infty})Z(\Pi_{\infty})Z(\Pi'_{\infty})$$
$$\sim _{E(\chi)E(\chi')E(\psi)} (2\pi i)^{(m+\frac{1}{2})n(n-1)}\mathcal{G}(\varepsilon_{K})^\frac{(2n-3)(n-1)}{2}\prod\limits_{k=1}^{n-1}p(\check{\chi}',\sigma_{k}')p(\check{\chi}',\overline{\sigma}_{k}')$$
equivariant under action of $G_{K}$.

Note that $\chi'$ is conjugate self-dual, thus $\chi'\chi'^{c}$ is the trivial character and then we have
$$
p(\check{\chi}',\sigma_{k}')p(\check{\chi}',\overline{\sigma}_{k}')\sim_{E(\chi')} p(\check{\chi}',\sigma_{k}')p(\check{\chi}'^{c},\sigma_{k}')\sim_{E(\chi')} p(\check{\chi}'\check{\chi'^{c}},\sigma_{k}')\sim_{E(\chi')} 1
$$
equivariant under action of $G_{K}$.

We remark that $\mathcal{G}(\varepsilon_{K})\in K^{\times}$ and hence $\mathcal{G}(\varepsilon_{K})\sim_{K} 1$ equivariant under action of $G_{K}$. Hence we have $$p(m,\Pi_{\infty},\Pi'_{\infty})Z(\Pi_{\infty})Z(\Pi'_{\infty})\sim _{E(\chi)E(\chi')E(\psi)}
(2\pi i)^{(m+\frac{1}{2})n(n-1)}$$
equivariant under action of $G_{K}$.

Note that both sides of the above equation depend only on $\chi_{\infty}$, $\chi'_{\infty}$ and $\psi_{\infty}=z$, thus the relation $\sim _{E(\chi)E(\chi')E(\psi)}$ can be improved to $\sim _{KE(\chi_{\infty})E(\chi'_{\infty})}$.
\begin{rem}
We have assumed both $E(\chi)$ and $E(\chi')$ contain $K$. But $E(\chi_{\infty})$ or $E(\chi'_{\infty})$ defined as the definition fields for certain algebraic representations in Section \ref{notation} may not contain $K$ in general.
\end{rem}

We deduce finally that:
\begin{thm}\label{applicationodd}
Assume $n$ is an odd integer. Let $K$ be a quadratic field and $F=F^{+}K$, $F'=F'^{+}K$ be two CM fields of degree $n$ and $n-1$ over $K$ and cyclic over $K$. Let $\chi$ and $\chi'$ be two algebraic, conjugate self-dual and very regular Hecke character of $F$ and $F'$ satisfying Hypothesis \ref{hypsupercuspidal}.

We write $\Pi:=\Pi(\chi)$ and $\Pi':=\Pi(\chi')\otimes ||\cdot||_{\AK}^{-\frac{1}{2}}\psi$ where $\psi$ is a Hecke character of $K$ such that $\psi\psi^{c}=||\cdot ||_{\AK}$ and $\psi_{\infty}(z)=z$. 

Let $m\geq 0$ be an integer such that $m+n-1$ is critical for $M(\Pi)\otimes M(\Pi')$. If $m=0$, we assume moreover that $L(m+\frac{1}{2},\Pi\times \Pi')\neq 0$. 

We then have:
$$p(m,\Pi_{\infty},\Pi'_{\infty})Z(\Pi_{\infty})Z(\Pi'_{\infty})\sim _{KE(\chi_{\infty})E(\chi'_{\infty})} (2\pi i)^{(m+\frac{1}{2})n(n-1)}$$
equivariant under action of $G_{K}$.

Here $p(m,\Pi_{\infty},\Pi'_{\infty})$, $Z(\Pi_{\infty})$ and $Z(\Pi'_{\infty})$ are Archimedean local factors defined in chapter $6$ of \cite{harrismotivic}.
\end{thm}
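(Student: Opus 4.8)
The plan is to compute the critical value $L(m+\frac{1}{2},\Pi\times\Pi')$ in two independent ways and compare. On one hand, $(\Pi,\Pi')$ satisfies Hypothesis \ref{hyp} (which in the present situation is Hypothesis \ref{hypdoubleodd}) as well as Hypothesis \ref{hypregular}, since $\chi,\chi'$ are very regular, so Theorem \ref{maintheorem2} applies and gives
$$L(m+\tfrac{1}{2},\Pi\times\Pi')\sim_{E(\chi)E(\chi')E(\psi)} p(m,\Pi_{\infty},\Pi'_{\infty})Z(\Pi_{\infty})Z(\Pi'_{\infty})\prod_{j=1}^{n-1}P^{(j)}(\Pi)\prod_{k=1}^{n-2}P^{(k)}(\Pi')$$
equivariant under action of $G_{K}$. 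First I would substitute the automorphic period relations already proved, namely $(\ref{oddfinal})$ for $\Pi=\Pi(\chi)$ and $(\ref{evenfinal})$ for $\Pi'=\Pi(\chi')\otimes||\cdot||_{\AK}^{-1/2}\psi$, which express each $P^{(j)}(\Pi)$ and $P^{(k)}(\Pi')$ in terms of discriminants, a power of $\mathcal{G}(\varepsilon_{K})$, a power of $2\pi i$, a CM period of $\chi$ or $\chi'$ for the CM type $\Phi_{j,\chi}$ (resp. $\Phi_{k,\chi'}$), and some $p(\psi),p(\psi^{c})$. Using the multiplicativity of CM periods in the CM type, equation $(\ref{separateCMtype})$ of Proposition \ref{propCM}, each $p(\check\chi,\Phi_{j,\chi})$ factors into the one-dimensional periods $p(\check\chi,\sigma_{i})$ and $p(\check\chi,\bar\sigma_{i})$; collecting over $j$ produces $\prod_{j}p(\check\chi,\sigma_{j})^{j-1}p(\check\chi,\bar\sigma_{j})^{n-j}$, and similarly for $\chi'$, while $p(\psi)p(\psi^{c})$ collapses to $2\pi i$ by $(\ref{charmulti})$ and $(\ref{norm})$. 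This yields one closed expression for $L(m+\frac{1}{2},\Pi\times\Pi')$.

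On the other hand, automorphic induction identifies $L(m+\frac{1}{2},\Pi\times\Pi')$ with $L(m,(\chi\circ N_{\AL/\AF})(\chi'\circ N_{\AL/\AFF})(\psi\circ N_{\AL/\AK}))$ for $L=FF'$. Comparing critical ranges (as in the one-variable argument) shows $m$ is critical for this Hecke character of $L$, so Blasius's Theorem \ref{blasius} expresses the value as $D_{F^{+}F'^{+}}^{1/2}(2\pi i)^{mn(n-1)}$ times the CM period of the conjugate-dual character over the compatible CM type $\Phi_{\chi,\chi'}$ of $L$, where $\tau_{jk}\in\Phi_{\chi,\chi'}$ if and only if $j+k\geq n+1$ by Hypothesis \ref{hypdoubleodd}. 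I would then rewrite $D_{F^{+}F'^{+}}^{1/2}\sim_{\Q}(D_{F^{+}})^{(n-1)/2}(D_{F'^{+}})^{n/2}$ using Lemma \ref{lemmadiscriminant}, and factor the CM period over $\Phi_{\chi,\chi'}$ using Proposition \ref{propCM} — both the norm-restriction formula and multiplicativity — together with $\tau_{jk}|_{F}=\sigma_{j}$, $\tau_{jk}|_{F'}=\sigma'_{k}$, $\tau_{jk}|_{K}=\mathrm{id}$. This gives a second closed expression for $L(m+\frac{1}{2},\Pi\times\Pi')$, again in terms of powers of $2\pi i$, discriminants, $p(\check\psi,1)$, $p(\check\psi,\iota)$, and the one-dimensional CM periods of $\chi$ and $\chi'$.

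Finally I would equate the two expressions. This is legitimate because $L(m+\frac{1}{2},\Pi\times\Pi')\neq 0$: for $m\geq 1$ this value lies in the region of absolute convergence, and for $m=0$ its non-vanishing is the standing hypothesis; hence $\sim$ behaves transitively here (Remark \ref{transitive}). In the comparison the exponents of $p(\check\chi,\sigma_{j})$ and $p(\check\chi,\bar\sigma_{j})$ match on both sides and cancel; the $\chi'$-periods survive only in the combination $\prod_{k}p(\check\chi',\sigma'_{k})p(\check\chi',\bar\sigma'_{k})$, which is $\sim 1$ since $\chi'$ is conjugate self-dual so $\check\chi'\check{\chi'}^{c}$ is trivial; the factor $\mathcal{G}(\varepsilon_{K})$ lies in $K^{\times}$, hence $\sim_{K}1$; and the surviving discriminant factors are rational numbers raised to integral powers (since $n-1$ is even), so they drop out. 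What remains is precisely
$$p(m,\Pi_{\infty},\Pi'_{\infty})Z(\Pi_{\infty})Z(\Pi'_{\infty})\sim_{E(\chi)E(\chi')E(\psi)}(2\pi i)^{(m+\frac{1}{2})n(n-1)}$$
equivariant under action of $G_{K}$. Since both sides depend only on the infinity types of $\chi$, $\chi'$ and $\psi$, Lemma \ref{anotherdef} lets one sharpen the coefficient field to $KE(\chi_{\infty})E(\chi'_{\infty})$. The main obstacle — essentially the only real work — is the combinatorial bookkeeping: getting the CM types $\Phi_{j,\chi}$, $\Phi_{k,\chi'}$, $\Phi_{\chi,\chi'}$ and the various index ranges exactly right so that all contributions of the one-dimensional $\chi$-periods cancel between the two sides.
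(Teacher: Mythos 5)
Your proposal is correct and follows essentially the same route as the paper: apply Theorem \ref{maintheorem2}, substitute the period relations (\ref{oddfinal}) and (\ref{evenfinal}) and factor into one-dimensional CM periods; independently compute the same critical value via automorphic induction, Blasius's theorem on the CM type $\Phi_{\chi,\chi'}$ of $L=FF'$, Lemma \ref{lemmadiscriminant} and the norm-restriction/multiplicativity rules of Proposition \ref{propCM}; then equate using non-vanishing, observe that the $\chi$-periods cancel exactly, the $\chi'$-periods collapse to $\prod_k p(\check\chi',\sigma'_k)p(\check\chi',\bar\sigma'_k)\sim 1$ by conjugate self-duality, and $\mathcal{G}(\varepsilon_K)\sim_K 1$, and finally sharpen the coefficient field to $KE(\chi_\infty)E(\chi'_\infty)$. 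This matches the paper's argument step for step.
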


\end{subsection}

\begin{subsection}{The case $n$ even}

Now we assume $n$ to be even. In this case, the infinity type of $\Pi$ is $(z^{a_{j}+\frac{1}{2}}\overline{z}^{-a_{j}-\frac{1}{2}})_{1\leq j\leq n}$ and the infinity type of $\Pi'$ is $(z^{a_{k}}\overline{z}^{-a_{k}})_{1\leq k\leq n-1}$. The Hypothesis \ref{hyp} remains the same as \ref{hypdoubleodd}
$$-a_{n}>a_{1}'\geq -a_{n-1}>a_{2}' \geq -a_{n-3}>a_{3}' \cdots \geq -a_{2}>a_{n-1}'\geq -a_{1}$$

Under this hypothesis, for $s=m+n-1$ a critical value of $M(\Pi)\otimes M(\Pi')$ with $m\geq 0$,
$$L(m+\frac{1}{2},\Pi\times \Pi')\sim_{E(\Pi)E(\Pi')} p(m,\Pi_{\infty},\Pi'_{\infty})Z(\Pi_{\infty})Z(\Pi'_{\infty})\prod\limits_{j=1}^{n-1}P^{(j)}(\Pi)\prod\limits_{k=1}^{n-2}P^{(k)}(\Pi')$$
equivariant under action of $G_{K}$.

By equation (\ref{oddfinal}) and (\ref{evenfinal}), for all $1\leq j\leq n-1$ and $1\leq k\leq n-2$
$$P^{(j)}(\Pi)\sim_{E(\chi)} D_{F^{+}}^{\frac{1}{2}}(2\pi i)^{-\frac{n}{2}}\mathcal{G}(\varepsilon_{K})^{-\frac{n}{2}}p(\check{\chi},\Phi_{j,\chi})p(\psi)^{j}p(\psi^{c})^{n-j};$$
$$P^{(k)}(\Pi') \sim_{E(\chi')E(\psi)} D_{F'^{+}}^{\frac{1}{2}}\mathcal{G}(\varepsilon_{K})^{-\frac{n-2}{2}}p(\check{\chi}',\Phi_{k,\chi'})$$
equivariant under action of $G_{K}$.

We repeat the calculation in previous section and get, for a critical point $s=m+\frac{1}{2}$ of $L(s,\Pi\times \Pi')$ with $m\geq 0$:


\begin{eqnarray}\label{double2even}
&L(m+\frac{1}{2},\Pi\times \Pi')&\sim_{E(\chi)E(\chi')E(\psi)}
D_{F^{+}}^{\frac{n-1}{2}}D_{F'^{+}}^{\frac{n-2}{2}}\mathcal{G}(\varepsilon_{K})^{-\frac{2n^2-5n+4}{2}}p(m,\Pi_{\infty},\Pi'_{\infty})Z(\Pi_{\infty})Z(\Pi'_{\infty})\times\nonumber\\
 &&\prod\limits_{j=1}^{n}p(\check{\chi},\sigma_{j})^{j-1}p(\check{\chi},\bar{\sigma}_{j})^{n-j}
\prod\limits_{k=1}^{n-1}p(\check{\chi}',\sigma_{k})^{k-1}p(\check{\chi}',\bar{\sigma'_{k}})^{n-1-k}
\end{eqnarray}
equivariant under action of $G_{K}$.
\bigskip

On the other hand,
\begin{eqnarray}
& &L(m+\frac{1}{2},\Pi\times \Pi')\nonumber \\
&=& L(m+\frac{1}{2}, \Pi(\chi)\otimes ||\cdot||_{\AK}^{-\frac{1}{2}} \psi\times  \Pi(\chi')) \nonumber\\
&=& L(m, \Pi(\chi)\otimes\psi\times  \Pi(\chi')) \nonumber\\
&=& L(m,(\chi\circ N_{\AL/\AF})(\chi'\circ N_{\AL/\AFF}) (\psi\circ N_{\AL/\AK}))\nonumber.
\end{eqnarray}

As in the previous section, by Blasius' result and Proposition \ref{propCM}, we have that equivariant under action of $G_{K}$:
\begin{eqnarray}\label{blasiusdoubleeven}
&& L(m+\frac{1}{2},\Pi\times \Pi')\nonumber\\
&\sim_{E(\chi)E(\chi')E(\psi)}&D_{F^{+}F'^{+}}^{\frac{1}{2}}(2\pi i)^{mn(n-1)} p((\check{\chi}\circ N_{\AL/\AF})(\check{\chi}'\circ N_{\AL/\AFF}) (\check{\psi}\circ N_{\AL/\AK}),\Phi_{\chi,\chi'})\nonumber\\
&\sim_{E(\chi)E(\chi')E(\psi)}& D_{F^{+}F'^{+}}^{\frac{1}{2}}(2\pi i)^{mn(n-1)}p((\check{\chi}\circ N_{\AL/\AF})(\check{\chi}'\circ N_{\AL/\AFF}) (\check{\psi}\circ N_{\AL/\AK}),\Phi_{\chi,\chi'}) \nonumber \\
&\sim_{E(\chi)E(\chi')E(\psi)}& D_{F^{+}F'^{+}}^{\frac{1}{2}}(2\pi i)^{(m+\frac{1}{2})n(n-1)} \prod\limits_{j=1}^{n}p(\check{\chi},\sigma_{j})^{j-1}p(\check{\chi}',\overline{\sigma}_{j})^{n-j} \times\prod\limits_{k=1}^{n-1}p(\check{\chi}',\sigma_{k}')^{k}p(\check{\chi}',\overline{\sigma}_{k}')^{n-k}\nonumber\\
&\sim_{E(\chi)E(\chi')E(\psi)}&D_{F^{+}}^{\frac{n-1}{2}}D_{F'^{+}}^{\frac{n}{2}} (2\pi i)^{(m+\frac{1}{2})n(n-1)} \prod\limits_{j=1}^{n}p(\check{\chi},\sigma_{j})^{j-1}p(\check{\chi}',\overline{\sigma}_{j})^{n-j} \times\prod\limits_{k=1}^{n-1}p(\check{\chi}',\sigma_{k-1}')^{k}p(\check{\chi}',\overline{\sigma}_{k}')^{n-k-1}\nonumber\\
\text{}
\end{eqnarray}
where the last equation is due to the fact that $\chi\chi^{c}$ is trivial.

If $L(m+\frac{1}{2},\Pi\times \Pi')\neq 0$,  we compare the above equation with equation (\ref{double2even}) and (\ref{blasiusdoubleeven}) and get:
$$p(m,\Pi_{\infty},\Pi'_{\infty})Z(\Pi_{\infty})Z(\Pi'_{\infty})\sim _{KE(\chi)E(\chi')E(\psi)}
(2\pi i)^{(m+\frac{1}{2})n(n-1)}\mathcal{G}(\varepsilon_{K})^\frac{2n^2-5n+4}{2}$$
equivariant under action of $G_{K}$.

Finally, since $\mathcal{G}(\varepsilon_{K})\sim_{K} 1$ equivariant under action of $G_{K}$ and both sides of the above equation depend only on $\chi_{\infty}$, $\chi'_{\infty}$ and $\psi_{\infty}=z$, we deduce that:
\begin{thm}\label{applicationeven}
Assume $n$ is an even integer. Let $K $ be a quadratic field and $F=F^{+}K$, $F'=F'^{+}K$ be two CM field of degree $n$ and $n-1$ over $K$ and cyclic over $K$. Let $\chi$ and $\chi'$ be two algebraic, conjugate self-dual and very regular Hecke character of $F$ and $F'$ satisfying Hypothesis \ref{hypsupercuspidal}.

We write $\Pi:=\Pi(\chi)\otimes ||\cdot||_{\AK}^{-\frac{1}{2}}\psi$ and $\Pi'=\Pi(\chi')$ where $\psi$ is a Hecke character of $K$ such that $\psi\psi^{c}=||\cdot||_{\AK}$ and $\psi_{\infty}(z)=z$. 

Let $m\geq 0$ be an integer such that $m+n-1$ is critical for $M(\Pi)\otimes M(\Pi')$. If $m=0$, we assume moreover that $L(m+\frac{1}{2},\Pi\times \Pi')\neq 0$.  

We have:
$$p(m,\Pi_{\infty},\Pi'_{\infty})Z(\Pi_{\infty})Z(\Pi'_{\infty})\sim_{KE(\chi_{\infty})E(\chi'_{\infty})}
(2\pi i)^{(m+\frac{1}{2})n(n-1)}$$
equivariant under action of $G_{K}$.

Here $p(m,\Pi_{\infty},\Pi'_{\infty})$, $Z(\Pi_{\infty})$ and $Z(\Pi'_{\infty})$ are Archimedean local factors defined in chapter $6$ of \cite{harrismotivic}.
\end{thm}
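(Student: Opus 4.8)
The plan is to reproduce, with the roles of the two characters interchanged, the argument already carried out for Theorem \ref{applicationodd}: compute $L(m+\tfrac{1}{2},\Pi\times\Pi')$ in two independent ways and compare. Here $\Pi=\Pi(\chi)\otimes||\cdot||_{\AK}^{-\frac{1}{2}}\psi$ and $\Pi'=\Pi(\chi')$ both satisfy the hypotheses of Theorem \ref{maintheorem} (regular, cohomological, conjugate self-dual, supercuspidal over $q$ by Hypothesis \ref{hypsupercuspidal}), and Hypothesis \ref{hyp} holds by assumption since it again reduces to Hypothesis \ref{hypdoubleodd}; so Theorem \ref{maintheorem2} applies to the pair $(\Pi,\Pi')$.

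First I would apply Theorem \ref{maintheorem2} to obtain $L(m+\tfrac{1}{2},\Pi\times\Pi')\sim p(m,\Pi_\infty,\Pi'_\infty)Z(\Pi_\infty)Z(\Pi'_\infty)\prod_{j=1}^{n-1}P^{(j)}(\Pi)\prod_{k=1}^{n-2}P^{(k)}(\Pi')$ equivariant under $G_K$. Then I substitute the automorphic periods via the period relations proved earlier: since $n$ is even, $P^{(j)}(\Pi)$ is given by (\ref{evenfinal}) and $P^{(k)}(\Pi')$ by (\ref{oddfinal}) (the opposite assignment to the odd case). Next I decompose the CM periods $p(\check\chi,\Phi_{j,\chi})$ and $p(\check\chi',\Phi_{k,\chi'})$ into products over single embeddings using (\ref{separateCMtype}) of Proposition \ref{propCM}, and collapse the $\psi$-contributions using $p(\psi)p(\psi^{c})\sim 2\pi i$, which follows from (\ref{charmulti}), (\ref{norm}) and $\psi\psi^{c}=||\cdot||_{\AK}$. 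This yields equation (\ref{double2even}).

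For the second computation, automorphic induction gives $L(m+\tfrac{1}{2},\Pi\times\Pi')=L(m,(\chi\circ N_{\AL/\AF})(\chi'\circ N_{\AL/\AFF})(\psi\circ N_{\AL/\AK}))$, an $L$-function of a motivic critical Hecke character over the CM field $L=FF'$ of degree $n(n-1)$ over $K$, with compatible CM type $\Phi_{\chi,\chi'}$ characterized by $\tau_{jk}\in\Phi_{\chi,\chi'}\iff a_j+a_k'<0\iff j+k\ge n+1$ (using Hypothesis \ref{hypdoubleodd}); criticality of $m$ follows from criticality of $m+n-1$ for $M(\Pi)\otimes M(\Pi')$ by comparing the two forms of Deligne's criterion. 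Applying Blasius's Theorem \ref{blasius}, handling the discriminant $D_{F^{+}F'^{+}}^{1/2}$ via Lemma \ref{lemmadiscriminant} ($\sim_\Q (D_{F^{+}})^{(n-1)/2}(D_{F'^{+}})^{n/2}$), decomposing the CM period over $L$ back through the norm maps into single-embedding CM periods of $\chi,\chi',\psi$ via Proposition \ref{propCM}, and again collapsing the $p(\check\psi,1)p(\check\psi,\iota)$ factors into a power of $2\pi i$, I arrive at equation (\ref{blasiusdoubleeven}).

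Finally, under the hypothesis $L(m+\tfrac{1}{2},\Pi\times\Pi')\neq 0$ — automatic for $m\ge 1$ since then $m$ is in the absolutely convergent range, and assumed for $m=0$ — transitivity of $\sim$ is available, so I equate (\ref{double2even}) and (\ref{blasiusdoubleeven}). The single-embedding CM periods of $\chi$ and $\chi'$ appearing on the two sides match and cancel, using conjugate self-duality in the form $p(\check\chi,\sigma_j)p(\check\chi,\bar\sigma_j)\sim p(\check\chi\check\chi^{c},\sigma_j)\sim 1$ (and likewise for $\chi'$); the discriminant powers cancel, leaving $p(m,\Pi_\infty,\Pi'_\infty)Z(\Pi_\infty)Z(\Pi'_\infty)\sim_{KE(\chi)E(\chi')E(\psi)}(2\pi i)^{(m+\frac{1}{2})n(n-1)}\mathcal{G}(\varepsilon_{K})^{(2n^2-5n+4)/2}$. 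Since $\mathcal{G}(\varepsilon_{K})\in K^{\times}$ the Gauss sum may be dropped, and since both sides depend only on $\chi_\infty,\chi'_\infty,\psi_\infty=z$ the coefficient field can be replaced by $KE(\chi_\infty)E(\chi'_\infty)$, giving the statement. The main obstacle is purely the bookkeeping: tracking all powers of $D_{F^{+}}$, $D_{F'^{+}}$, $\mathcal{G}(\varepsilon_{K})$, $2\pi i$ and $p(\psi)$ through both computations so that they cancel exactly as claimed — in particular producing the Gauss-sum exponent $(2n^2-5n+4)/2$, which differs from the odd-case exponent $(2n-3)(n-1)/2$ precisely because $\chi$ now contributes through (\ref{evenfinal}) and $\chi'$ through (\ref{oddfinal}).
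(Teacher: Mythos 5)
Your proposal is correct and follows exactly the same route as the paper's own proof: apply Theorem \ref{maintheorem2}, substitute the period relations (\ref{evenfinal}) for $\Pi$ and (\ref{oddfinal}) for $\Pi'$ (the opposite assignment to the odd case), decompose CM periods and collapse $\psi$-factors to obtain (\ref{double2even}), compute the same $L$-value via automorphic induction and Blasius together with Lemma \ref{lemmadiscriminant} and Proposition \ref{propCM} to obtain (\ref{blasiusdoubleeven}), then compare under the non-vanishing hypothesis, cancel the single-embedding CM periods by conjugate self-duality, drop $\mathcal{G}(\varepsilon_K)\in K^{\times}$, and shrink the coefficient field to $KE(\chi_\infty)E(\chi'_\infty)$. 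The bookkeeping of the Gauss-sum exponent $(2n^2-5n+4)/2$ is also exactly as in the paper.
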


\end{subsection}

\begin{subsection}{Final remarks}
In both cases ($n$ even or odd), we have got the same result:
$$p(m,\Pi_{\infty},\Pi'_{\infty})Z(\Pi_{\infty})Z(\Pi'_{\infty})\sim_{KE(\Pi_{\infty})E(\Pi'_{\infty})}
(2\pi i)^{(m+\frac{1}{2})n(n-1)}$$
here $\Pi_{\infty}$ and $\Pi'_{\infty}$ are obtained from Hecke characters as in previous sections.

Note that for any $(\Pi,\Pi')$ as in Section \ref{ntimesn-1}, we may find Hecke characters $\chi$ and $\chi'$ such that $\Pi_{\infty}$ and $\Pi'_{\infty}$ are the same as the representations we got from $\chi$ and $\chi'$ in previous sections. Therefore, the above result can be generalized to any pair $(\Pi,\Pi')$ which satisfy the conditions in Section \ref{ntimesn-1}. Let us give more details on this point.

\begin{df}
Let $K$ be a quadratic imaginary field and $F$ be a CM field containing $K$. For $\chi$ an algebraic Hecke character of $K$, define $\Pi_{\chi}:=\Pi(\chi)$ if the degree of $F$ over $K$ is odd; $\Pi_{\chi}:=\Pi(\chi)\otimes ||\cdot||_{\AK}^{-\frac{1}{2}}\psi$ if the degree of $F$ over $K$ is even where $\Pi(\chi)$ is the automorphic induction of $\chi$ from $GL_{1}(\AF)$ to $GL_{n}(\AK)$ where $n$ is the degree of $F$ over $K$ and $\psi$ is a fixed algebraic Hecke character of $K$ with infinity type $z^{1}\overline{z}^{0}$ such that $\psi\psi^{c}=||\cdot||_{\AK}$ (c.f. Lemma \ref{psi}).
\end{df}

We first restate Theorem \ref{applicationodd} and Theorem \ref{applicationeven} together:
\begin{thm}
Let $K$ be a quadratic imaginary field and $F^{+}$ (resp. $F'^{+}$) be a totally real field of degree $n$ (resp. $n-1$) over $\Q$.  Let $F=F^{+}K$ and $F'=F'^{+}K$. Assume $F$ and $F'$ are cyclic over $K$.

Let $\chi$ and $\chi'$ be two algebraic regular conjugate self-dual Hecke characters of $F$ and $F'$ satisfying Hypothesis \ref{hypsupercuspidal} and Hypothesis \ref{hypdoubleodd}. Put $\Pi=\Pi_{\chi}$ and $\Pi'=\Pi_{\chi'}$. 

Let $m\geq 0$ be an integer such that $m+n-1$ is critical for $M(\Pi)\otimes M(\Pi')$. If $m=0$, we assume moreover that $L(m+\frac{1}{2},\Pi\times \Pi')\neq 0$. 

Then the archimedean local factors defined in chapter $6$ of \cite{harrismotivic} satisfy that:

$$p(m,\Pi_{\infty},\Pi'_{\infty})Z(\Pi_{\infty})Z(\Pi'_{\infty})\sim _{KE(\chi_{\infty})E(\chi'_{\infty})}
(2\pi i)^{(m+\frac{1}{2})n(n-1)}.$$
\end{thm}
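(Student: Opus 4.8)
The plan is to observe that this statement is simply the union of Theorem \ref{applicationodd} and Theorem \ref{applicationeven}, recast in the uniform notation $\Pi_{\chi}$ introduced just above; the proof is therefore a case distinction on the parity of $n$.

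First I would unwind the definitions. Since $F=F^{+}K$ has degree $n$ over $K$ and $F'=F'^{+}K$ has degree $n-1$ over $K$, exactly one of these two degrees is odd. If $n$ is odd, then $\Pi_{\chi}=\Pi(\chi)$ and $\Pi_{\chi'}=\Pi(\chi')\otimes||\cdot||_{\AK}^{-1/2}\psi$, which is precisely the pair $(\Pi,\Pi')$ of Theorem \ref{applicationodd}; if $n$ is even, then $\Pi_{\chi}=\Pi(\chi)\otimes||\cdot||_{\AK}^{-1/2}\psi$ and $\Pi_{\chi'}=\Pi(\chi')$, the pair of Theorem \ref{applicationeven}. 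The standing hypotheses also match: $\chi,\chi'$ are algebraic, conjugate self-dual, very regular and satisfy Hypothesis \ref{hypsupercuspidal} in both theorems, and Hypothesis \ref{hypdoubleodd} is exactly the translation of Hypothesis \ref{hyp} for $(\Pi,\Pi')$ in both parities, as verified in the two preceding subsections. The critical-point condition that $m+n-1$ be critical for $M(\Pi)\otimes M(\Pi')$, together with the extra non-vanishing assumption when $m=0$, is likewise common to both.

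Then I would invoke the conclusion of Theorem \ref{applicationodd} (when $n$ is odd) or Theorem \ref{applicationeven} (when $n$ is even), each of which yields
$$p(m,\Pi_{\infty},\Pi'_{\infty})Z(\Pi_{\infty})Z(\Pi'_{\infty})\sim_{KE(\chi_{\infty})E(\chi'_{\infty})}(2\pi i)^{(m+\frac{1}{2})n(n-1)}$$
equivariant under action of $G_{K}$, with the same coefficient field $KE(\chi_{\infty})E(\chi'_{\infty})$ in both cases. This is the assertion, so the proof is complete.

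There is no genuine obstacle at this level; the substance lies in the two subsections. For the record, the real work there consists of: computing $P^{(s)}(\Pi)$ and $P^{(s)}(\Pi')$ from the period relations \eqref{oddfinal} and \eqref{evenfinal}; feeding these into Theorem \ref{maintheorem2}; independently rewriting $L(m+\tfrac{1}{2},\Pi\times\Pi')$ as the value at $m$ of the $L$-function of an algebraic Hecke character of $L=FF'$ and applying Blasius' result (Theorem \ref{blasius}); decomposing the resulting CM period of $L$ into products of CM periods of $\chi$, $\chi'$, $\psi$ via Proposition \ref{propCM}; and then cancelling the CM periods of $\chi$ and of $\chi'$ against one another, using conjugate self-duality (so that $p(\check{\chi}',\sigma_{k}')p(\check{\chi}',\overline{\sigma}_{k}')\sim 1$), the discriminant identity of Lemma \ref{lemmadiscriminant}, the relation $p(\psi)p(\psi^{c})\sim 2\pi i$, and the fact that $\mathcal{G}(\varepsilon_{K})\in K^{\times}$. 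The only delicate point is the bookkeeping of the powers of $D_{F^{+}}$, $D_{F'^{+}}$, $\mathcal{G}(\varepsilon_{K})$ and $2\pi i$ on the two sides, followed by the final upgrade of the coefficient field from $KE(\chi)E(\chi')E(\psi)$ to $KE(\chi_{\infty})E(\chi'_{\infty})$, which is legitimate because both members of the identity depend only on the infinity types $\chi_{\infty}$, $\chi'_{\infty}$ and $\psi_{\infty}(z)=z$.
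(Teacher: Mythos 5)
Your proposal is correct and matches the paper exactly: the paper introduces this theorem with the words ``We first restate Theorem \ref{applicationodd} and Theorem \ref{applicationeven} together,'' so the intended proof is precisely the parity case distinction you give, with $\Pi_{\chi},\Pi_{\chi'}$ unwinding to the pairs treated in those two subsections.
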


\begin{lem}\label{chiandchi'}
Let $n$ be an integer positive and $K$ be a quadratic imaginary field. Let $F^{+}$ be a totally real field of degree $n$ over $\Q$. Put $F=F^{+}K$ a CM field.
If $\Pi$ is an algebraic cuspidal representation of $GL_{n}(K)$ then there exists $\chi$ an algebraic Hecke character of $F$ which satisfies Hypothesis \ref{hypsupercuspidal} such that $\Pi_{\infty}\cong \Pi_{\chi,\infty}$.

Furthermore, if $\Pi$ is conjugate self-dual, we may have in addition that $\chi$ is conjugate self-dual.
\end{lem}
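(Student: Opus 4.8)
The plan is to reduce the statement to the existence of an algebraic Hecke character of $F$ with a prescribed infinity type, and then to observe that the purity of $\Pi$ is exactly what makes that infinity type admissible.

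First I would compute the infinity type of $\Pi_{\chi}$ in terms of that of $\chi$. Write $\chi_{\infty}(z)=\prod_{i=1}^{n}\sigma_{i}(z)^{c_{i}}\overline{\sigma}_{i}(z)^{d_{i}}$ with $c_{i},d_{i}\in\Z$. Since $K$ has a unique Archimedean place and the $n$ Archimedean places of $F$ above it all have completion $\C=K_{\infty}$ (the compatible embeddings being $\sigma_{1},\dots,\sigma_{n}$), the Archimedean component of the automorphic induction $\Pi(\chi)$ has infinity type $(z^{c_{i}}\overline{z}^{d_{i}})_{1\le i\le n}$, consistently with the computation recalled after Hypothesis \ref{hypsupercuspidal}. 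Hence $\Pi_{\chi,\infty}$ has infinity type $(z^{c_{i}}\overline{z}^{d_{i}})_{1\le i\le n}$ if $n$ is odd, and $(z^{c_{i}+\frac12}\overline{z}^{d_{i}-\frac12})_{1\le i\le n}$ if $n$ is even (using $\psi_{\infty}(z)=z$ and $||\cdot||_{\AK,\infty}(z)=z\overline{z}$). Now write the infinity type of $\Pi$ as $(z^{a_{i}}\overline{z}^{b_{i}})_{1\le i\le n}$ with $a_{i},b_{i}\in\Z+\frac{n-1}{2}$; by Lemma \ref{wPi} the representation $\Pi$ is pure, so $a_{i}+b_{i}=-w_{\Pi}$ for all $i$. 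Put $(c_{i},d_{i}):=(a_{i},b_{i})$ when $n$ is odd and $(c_{i},d_{i}):=(a_{i}-\frac12,\,b_{i}+\frac12)$ when $n$ is even; in both cases $c_{i},d_{i}\in\Z$ and $c_{i}+d_{i}=-w_{\Pi}$ is independent of $i$, and $\Pi_{\chi,\infty}\cong\Pi_{\infty}$ as soon as $\chi$ has infinity type $\prod_{i}\sigma_{i}(z)^{c_{i}}\overline{\sigma}_{i}(z)^{d_{i}}$.

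Next I would produce such a $\chi$. Because $c_{i}+d_{i}$ is independent of $i$, the prescribed infinity type satisfies the type-$A_{0}$ condition for $F$, so a Hecke character $\chi_{0}$ of $F$ with that infinity type exists, by Weil's theorem (equivalently, by Lemma $4.1.4$ of \cite{CHT}). To arrange Hypothesis \ref{hypsupercuspidal} I would twist $\chi_{0}$ by an auxiliary finite-order Hecke character: at each of the two places $w$ of $F$ above $q$ — recall $q=vv^{c}$ splits in $K$ with $v,v^{c}$ inert in $F$, so $F_{w}/K_{v}$ is the unramified cyclic extension of degree $n$ — choose a sufficiently ramified local character so that $\chi_{0}$ becomes primitive there, i.e. not of the form $\nu\circ N_{F_{w}/F_{w}^{H}}$ for any proper intermediate field $F_{w}^{H}$; only finitely many conductor-bounded local characters have this excluded shape, so such choices exist. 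These local characters extend to a finite-order Hecke character $\mu$ of $F$ by class field theory, and twisting by $\mu$ leaves the infinity type unchanged, so $\chi:=\chi_{0}\mu$ has all the required properties.

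For the last assertion, suppose $\Pi$ is conjugate self-dual. Then $w_{\Pi}=0$, so $d_{i}=-c_{i}$ and the target infinity type $\prod_{i}\sigma_{i}(z/\overline{z})^{c_{i}}$ is itself conjugate self-dual. The existence of a \emph{conjugate self-dual} algebraic Hecke character $\chi_{1}$ with this infinity type is again Lemma $4.1.4$ of \cite{CHT} in the weight-zero case — exactly as in the proof of Lemma \ref{psi}; concretely, starting from the $\chi_{0}$ above one notes that $\chi_{0}\chi_{0}^{c}$ is a finite-order, $c$-invariant character equal to $\bigl(\chi_{0}^{-1}(\chi_{0}^{-1})^{c}\bigr)^{-1}$, hence already lies in the image of $\nu\mapsto\nu\nu^{c}$ on all Hecke characters, and a short Tate-cohomology computation (the dual of the connected component of the idele class group of $F$ being cohomologically trivial for complex conjugation) shows it is also $\nu\nu^{c}$ for some \emph{finite-order} $\nu$; set $\chi_{1}=\chi_{0}\nu$. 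Finally one installs Hypothesis \ref{hypsupercuspidal} as before, but with a conjugate self-dual twist — e.g. $\mu=\mu_{1}(\mu_{1}^{c})^{-1}$ with $\mu_{1}$ finite-order, suitably ramified at the place of $F$ above $v$ and unramified at the place above $v^{c}$ — so that $\chi:=\chi_{1}\mu$ remains conjugate self-dual. The main obstacle is precisely this conjugate self-dual existence statement together with the bookkeeping that makes the ramified twist compatible with it; the rest is a direct application of purity (Lemma \ref{wPi}) and the infinity-type computation.
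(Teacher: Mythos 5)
Your proposal follows essentially the same route as the paper's proof: compute the infinity type of $\Pi_{\chi}$ in terms of that of $\chi$ (splitting by the parity of $n$), use the purity from Lemma \ref{wPi} to see that the prescribed infinity type is admissible, invoke the existence results of \cite{CHT} (Lemmas $4.1.1$--$4.1.4$) to produce a Hecke character with that infinity type (conjugate self-dual in the second case), and arrange Hypothesis \ref{hypsupercuspidal} by a finite-order twist at the places above $q$. The paper's handling of the conjugate self-dual case is slightly tidier: it first builds $\chi$ with the correct infinity type and satisfying Hypothesis \ref{hypsupercuspidal}, then finds $\phi$ with trivial infinity type, $\phi\phi^{c}=\chi\chi^{c}$, and (crucially) $\phi$ \emph{trivial} at the places above $q$, so that $\chi'=\chi\phi^{-1}$ automatically still satisfies Hypothesis \ref{hypsupercuspidal}; you instead twist afresh by a conjugate self-dual finite-order $\mu=\mu_{1}(\mu_{1}^{c})^{-1}$ ramified above $q$, which also works.

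One small but real error: you set $\chi_{1}=\chi_{0}\nu$ where $\nu\nu^{c}=\chi_{0}\chi_{0}^{c}$, but then $\chi_{1}\chi_{1}^{c}=(\nu\nu^{c})^{2}\neq 1$. It should be $\chi_{1}=\chi_{0}\nu^{-1}$ (matching the paper's $\chi'=\chi\phi^{-1}$), which gives $\chi_{1}\chi_{1}^{c}=1$ as needed.
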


\begin{dem}
We denote the infinity type of $\Pi$ by $(z^{a_{i}}\overline{z}^{b_{i}})_{1\leq i\leq n}$ with $a_{i}$, $b_{i}\in \Z+\cfrac{n-1}{2}$. By Lemma \ref{wPi}, $a_{i}+b_{i}$ is a constant independent of $i$. If $n$ is odd, we may take $\chi$ an algebraic Hecke character of $F$ satisfying Hypothesis \ref{hypsupercuspidal} with infinity type ($\sigma_{i}^{a_{i}}\overline{\sigma}_{i}^{b_{i}})_{1\leq i\leq n}$. If $n$ is even, may take $\chi$ an algebraic Hecke character of $F$ satisfying Hypothesis \ref{hypsupercuspidal} with infinity type $(\sigma_{i}^{a_{i}-\frac{1}{2}}\overline{\sigma}_{i}^{b_{i}+\frac{1}{2}})_{1\leq i\leq n}$. Here $\sigma_{1},\cdots, \sigma_{n}$ are the embeddings from $F$ to $\C$ which are trivial on $K$ as before. The existence of $\chi$ is guaranteed by the fact that $a_{i}+b_{i}$ is a constant (see Lemma $4.1.1$ and paragraphs before Lemma $4.1.3$ in \cite{CHT}). It is easy to see that $\Pi_{\infty}$ and $\Pi_{\chi,\infty}$ have the same infinity type and then are isomorphic to each other.

If $\Pi$ is conjugate self-dual, we see that $\chi_{\infty}$ is conjugate self-dual. In particular, $\chi|_{F^{+}}$ is trivial at infinity places. By Lemma $4.1.4$ of \cite{CHT}, we may find $\phi$ an algebraic Hecke character of $F$ with trivial infinity type such that $\phi\phi^{c}=\chi\chi^{c}$. From the proof of Lemma $4.1.4$ of \textit{loc.cit}, we see that $\psi$ can be trivial at places over $q$. Put $\chi'=\chi\phi^{-1}$. It is then a conjugate self-dual Hecke character which satisfies Hypothesis \ref{hypsupercuspidal} and the condition $\Pi_{\infty}\cong \Pi_{\chi',\infty}$.

\end{dem}

The above lemma allow us to generalize Theorem \ref{applicationodd} and Theorem \ref{applicationeven} to arbitrary pair $(\Pi,\Pi')$ which satisfies the conditions in Theorem \ref{maintheorem2}. We remark that Hypothesis \ref{hyp} and Hypothesis \ref{hypregular} only concern the infinity type of the representation. Therefore, if $(\Pi,\Pi')$ satisfies Hypothesis \ref{hyp} and \ref{hypregular}, then $(\Pi_{\chi},\Pi_{\chi'})$ also satisfies these hypothesis where $\chi$ and $\chi'$ are characters associated to $\Pi$ and $\Pi'$ as in the above lemme. 

Note that an extra condition on the non vanishing of $L$-function will be needed when $m=0$:

\begin{hyp}\label{m=0}
For $\Pi$ and $\Pi'$ conjugate self-dual algebraic cuspidal representations of $GL_{n}(\AK)$ and $GL_{n-1}(\AK)$, there exists Hecke characters $\chi$ and $\chi'$ of $F$ and $F'$ such that:
\begin{itemize}
\item $\chi$ and $\chi'$ are conjugate self-dual;
\item $\Pi_{\infty}\cong \Pi_{\chi,\infty}$ and $\Pi_{\infty}'\cong \Pi_{\chi',\infty}$;
\item $L(\cfrac{1}{2}, \Pi_{\chi}\times \Pi_{\chi'})\neq 0$.
\end{itemize}
\end{hyp}

We can generalize Theorem \ref{applicationodd} and Theorem \ref{applicationeven} as follows:

\begin{thm}
Let $\Pi$ and $\Pi'$ be cuspidal representations of $GL_{n}(\AK)$ and $GL_{n-1}(\AK)$ as in Section \ref{ntimesn-1}, i.e. $\Pi$ and $\Pi'$ are very regular, cohomological, conjugate self-dual, supercuspidal at places over $q$ and their infinity types satisfy Hypothesis \ref{hyp}. 

Let $m\geq 0$ be an integer such that $m+n-1$ is critical for $M(\Pi)\otimes M(\Pi')$. If $m=0$, we assume moreover Hypothesis \ref{m=0}. 

We then have $$p(m,\Pi_{\infty},\Pi'_{\infty})Z(\Pi_{\infty})Z(\Pi'_{\infty})\sim _{KE(\Pi_{\infty})E(\Pi'_{\infty})}
(2\pi i)^{(m+\frac{1}{2})n(n-1)}$$
equivariant under action of $G_{K}$.

Consequently, 
$$L(m+\frac{1}{2},\Pi\times \Pi')\sim_{E(\Pi)E(\Pi')} (2\pi i)^{(m+\frac{1}{2})n(n-1)}\prod\limits_{j=1}^{n-1}P^{(j)}(\Pi)\prod\limits_{k=1}^{n-2}P^{(k)}(\Pi')$$
equivariant under action of $G_{K}$. 
\end{thm}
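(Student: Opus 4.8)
The plan is to deduce the theorem from Theorems \ref{applicationodd} and \ref{applicationeven}, exploiting that the archimedean factors $p(m,\Pi_{\infty},\Pi'_{\infty})$, $Z(\Pi_{\infty})$ and $Z(\Pi'_{\infty})$ depend only on the infinity types of $\Pi$ and $\Pi'$. First I would fix a cyclic totally real field $F^{+}$ of degree $n$ and a cyclic totally real field $F'^{+}$ of degree $n-1$ (such fields exist, e.g.\ inside cyclotomic fields), set $F=F^{+}K$ and $F'=F'^{+}K$, and invoke Lemma \ref{chiandchi'} to produce conjugate self-dual algebraic Hecke characters $\chi$ of $F$ and $\chi'$ of $F'$, both satisfying Hypothesis \ref{hypsupercuspidal}, with $\Pi_{\chi,\infty}\cong\Pi_{\infty}$ and $\Pi_{\chi',\infty}\cong\Pi'_{\infty}$. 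When $m=0$ I would instead take $\chi,\chi'$ to be the characters supplied by Hypothesis \ref{m=0}, so that in addition $L(\tfrac12,\Pi_{\chi}\times\Pi_{\chi'})\neq 0$; for $m\geq 1$ this nonvanishing is automatic (cf.\ the proof of Theorem \ref{applicationodd}).

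Next I would observe that $(\Pi_{\chi},\Pi_{\chi'})$ inherits from $(\Pi,\Pi')$ every hypothesis of Theorems \ref{applicationodd}/\ref{applicationeven}: very-regularity, Hypothesis \ref{hyp} and criticality of $M(\Pi_{\chi})\otimes M(\Pi_{\chi'})$ at $m+n-1$ are conditions on infinity (Hodge) types only, and $\chi,\chi'$ are themselves very regular by construction. Applying Theorem \ref{applicationodd} if $n$ is odd, or Theorem \ref{applicationeven} if $n$ is even, then gives
\[
p(m,\Pi_{\chi,\infty},\Pi_{\chi',\infty})\,Z(\Pi_{\chi,\infty})\,Z(\Pi_{\chi',\infty})\ \sim_{KE(\chi_{\infty})E(\chi'_{\infty})}\ (2\pi i)^{(m+\frac12)n(n-1)}
\]
equivariant under $G_{K}$. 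Since these archimedean factors coincide with $p(m,\Pi_{\infty},\Pi'_{\infty})$, $Z(\Pi_{\infty})$, $Z(\Pi'_{\infty})$ and the fields $E(\chi_{\infty})$, $E(\chi'_{\infty})$ depend only on the infinity data (so may be identified with $E(\Pi_{\infty})$, $E(\Pi'_{\infty})$), this is precisely the first displayed relation of the theorem.

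For the last assertion I would apply Theorem \ref{maintheorem2} to $(\Pi,\Pi')$, all of whose hypotheses are in force, obtaining $L(m+\tfrac12,\Pi\times\Pi')\sim_{E(\Pi)E(\Pi')}p(m,\Pi_{\infty},\Pi'_{\infty})Z(\Pi_{\infty})Z(\Pi'_{\infty})\prod_{j=1}^{n-1}P^{(j)}(\Pi)\prod_{k=1}^{n-2}P^{(k)}(\Pi')$ equivariant under $G_{K}$, and then substitute the archimedean factor by $(2\pi i)^{(m+\frac12)n(n-1)}$ using the relation just established. Since $E(\Pi)\supseteq K$ also contains the definition field of the infinity data, one has $KE(\Pi_{\infty})E(\Pi'_{\infty})\subseteq E(\Pi)E(\Pi')$, so that relation persists over $E(\Pi)E(\Pi')$; multiplying it by the nonzero product $\prod_{j}P^{(j)}(\Pi)\prod_{k}P^{(k)}(\Pi')$ and using that $p(m,\Pi_{\infty},\Pi'_{\infty})Z(\Pi_{\infty})Z(\Pi'_{\infty})\neq 0$ makes the chaining of the two $\sim$-relations legitimate (cf.\ Remark \ref{transitive}), which yields the stated formula for $L(m+\tfrac12,\Pi\times\Pi')$.

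The delicate points, rather than the main line of argument, will be: (i) confirming carefully that each hypothesis of Theorems \ref{applicationodd}--\ref{applicationeven} and \ref{maintheorem2} genuinely depends only on the infinity data of $(\Pi,\Pi')$, or can be arranged through Hypothesis \ref{m=0}; and (ii) matching the coefficient and rationality fields so that the transfer of the relation and the subsequent use of transitivity of $\sim$ are valid --- for which one needs the nonvanishing of the archimedean factor, a fact already implicit in \cite{harrismotivic}. The requirement that $F^{+}$ and $F'^{+}$ be cyclic is harmless, since cyclic totally real fields of every degree exist.
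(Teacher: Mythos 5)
Your proposal is correct and follows the same route that the paper uses to deduce this theorem: invoke Lemma \ref{chiandchi'} (and Hypothesis \ref{m=0} when $m=0$) to produce Hecke characters $\chi,\chi'$ matching the given infinity types, transfer the archimedean identity from Theorem \ref{applicationodd} or \ref{applicationeven}, and substitute back into Theorem \ref{maintheorem2}; the paper leaves this reduction largely implicit while you fill in the details (choice of cyclic $F^{+},F'^{+}$, comparison of coefficient fields, nonvanishing needed for transitivity), which is a faithful reconstruction of the intended argument.
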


\begin{rem}
The above result is compatible with the Deligne conjecture and M. Harris's calculation on the Deligne period.

Recall that the Deligne conjecture predicts
$$L(n-1+m, M(\Pi)\otimes M(\Pi'))\sim c^{+}(M(\Pi)\otimes M(\Pi')(n-1+m)).$$

The equation $(4.12)$ of \cite{harrismotivic} gives
$$c^{+}(M(\Pi)\otimes M(\Pi')(n-1+m)) \sim (2\pi i)^{(m+\frac{1}{2})n(n-1)} \prod\limits_{j=1}^{n-1}P_{\leq j}(\Pi)\prod\limits_{k=1}^{n-2}P_{\leq k}(\Pi')$$ (see chapter $4$ of \cite{harrismotivic} for the notion).
From the discussion after Theorem $4.27$ in \cite{harrismotivic} we see that $P^{(s)}\sim P_{\leq s}$ in our case.

\end{rem}

\end{subsection}

\end{section}

\bibliography{bibfile}
\bibliographystyle{alpha}

\end{document}